\documentclass[12pt,reqno]{amsart}
\usepackage[colorlinks=true, pdfstartview=FitV, linkcolor=blue, citecolor=blue, urlcolor=blue]{hyperref}
\usepackage{amssymb,amsmath, amscd}
\usepackage{ragged2e}
\usepackage{times, verbatim}
\usepackage{mathdots}
\usepackage{array}
\usepackage{graphicx}
\usepackage[english]{babel}
 \usepackage[usenames, dvipsnames]{color}
\usepackage{amsmath,amssymb,amsfonts}
\usepackage{enumerate, enumitem}
\usepackage{MnSymbol}
\usepackage{anysize}
\usepackage{enumitem}
\usepackage{bigints}
\usepackage{braket}
\usepackage[english]{babel}
\usepackage{blindtext}
\usepackage{mathtools}
\usepackage{graphicx}
\usepackage{tikz-cd}
\usepackage[a4paper,verbose]{geometry}
\marginsize{2.5cm}{2.5cm}{2.5cm}{2.5cm}
\input xy
\xyoption{all}
\usepackage{pb-diagram}
\usepackage[all]{xy}
\input xy
\xyoption{all}

\DeclareFontFamily{OT1}{rsfs}{}
\DeclareFontShape{OT1}{rsfs}{n}{it}{<-> rsfs10}{}
\DeclareMathAlphabet{\mathscr}{OT1}{rsfs}{n}{it}

\numberwithin{equation}{section}% reset equation counter for sections
\numberwithin{equation}{subsection}
\renewcommand*{\theequation}{%
  \ifnum\value{subsection}=0 %
    \thesection
  \else
    \thesubsection
  \fi
  .\arabic{equation}%
}

\newtheorem{thm}[equation]{Theorem}% This will reset theorem numbering at each subsection level
\newtheorem{prop}[equation]{Proposition}
\newtheorem{corollary}[equation]{Corollary}
\newtheorem{conj}[equation]{Conjecture}
\newtheorem{lemma}[equation]{Lemma}
\newtheorem{defn}[equation]{Definition}

\newtheorem{remark}[equation]{Remark}
\newtheorem{Example}[equation]{Example}

\theoremstyle{definition}
\newtheorem*{funding}{Funding}

\theoremstyle{remark}
\newtheorem*{acknowledgements}{Acknowledgments}

\begin{document}
\theoremstyle{plain}

\newcommand{\bigboxplus}{
	\mathop{
		\vphantom{\bigoplus} 
		\mathchoice
		{\vcenter{\hbox{\Resizebox{\widthof{$\displaystyle\bigoplus$}}{!}{$\boxplus$}}}}
		{\vcenter{\hbox{\Resizebox{\widthof{$\bigoplus$}}{!}{$\boxplus$}}}}
		{\vcenter{\hbox{\Resizebox{\widthof{$\scriptstyle\oplus$}}{!}{$\boxplus$}}}}
		{\vcenter{\hbox{\Resizebox{\widthof{$\scriptscriptstyle\oplus$}}{!}{$\boxplus$}}}}
	}\displaylimits 
}

\newcommand{\Hecke}{\mathcal{H}}
\newcommand{\Liea}{\mathfrak{a}}
\newcommand{\Cmg}{C_{\mathrm{mg}}}
\newcommand{\Cinftyumg}{C^{\infty}_{\mathrm{umg}}}
\newcommand{\Cfd}{C_{\mathrm{fd}}}
\newcommand{\Cinftyfd}{C^{\infty}_{\mathrm{ufd}}}
\newcommand{\sspace}{\Gamma \backslash G}
\newcommand{\PP}{\mathcal{P}}
\newcommand{\bfP}{\mathbf{P}}
\newcommand{\bfQ}{\mathbf{Q}}
\newcommand{\Siegel}{\mathfrak{S}}
\newcommand{\g}{\mathfrak{g}}
\newcommand{\A}{\mathbb{A}}
\newcommand{\Q}{\mathbb{Q}}
\newcommand{\Gm}{\mathbb{G}_m}
\newcommand{\Nm}{\mathbb{N}m}
\newcommand{\ii}{\mathfrak{i}}
\newcommand{\II}{\mathfrak{I}}

\newcommand{\kk}{\mathfrak{k}}
\newcommand{\nn}{\mathfrak{n}}
\newcommand{\tF}{\widetilde{F}}
\newcommand{\p}{\mathfrak{p}}
\newcommand{\m}{\mathfrak{m}}
\newcommand{\bb}{\mathfrak{b}}
\newcommand{\Ad}{{\rm Ad}\,}
\newcommand{\ttt}{\mathfrak{t}}
\newcommand{\frakt}{\mathfrak{t}}
\newcommand{\U}{\mathcal{U}}
\newcommand{\Z}{\mathbb{Z}}
\newcommand{\bfG}{\mathbf{G}}
\newcommand{\bfT}{\mathbf{T}}
\newcommand{\R}{\mathbb{R}}
\newcommand{\ST}{\mathbb{S}}
\newcommand{\h}{\mathfrak{h}}
\newcommand{\bC}{\mathbb{C}}
\newcommand{\C}{\mathbb{C}}
\newcommand{\N}{\mathbb{N}}
\newcommand{\qH}{\mathbb {H}}
\newcommand{\temp}{{\rm temp}}
\newcommand{\Hom}{{\rm Hom}}
\newcommand{\Aut}{{\rm Aut}}
\newcommand{\rk}{{\rm rk}}
\newcommand{\Ext}{{\rm Ext}}
\newcommand{\End}{{\rm End}\,}
\newcommand{\Ind}{{\rm Ind}}
\newcommand{\ind}{{\rm ind}}
\newcommand{\Irr}{{\rm Irr}}
\def\circG{{\,^\circ G}}
\def\M{{\rm M}}
\def\diag{{\rm diag}}
\def\Ad{{\rm Ad}}
\def\As{{\rm As}}
\def\wG{{\widehat G}}
\def\G{{\rm G}}
\def\SL{{\rm SL}}
\def\PSL{{\rm PSL}}
\def\GSp{{\rm GSp}}
\def\PGSp{{\rm PGSp}}
\def\Sp{{\rm Sp}}
\def\St{{\rm St}}
\def\GU{{\rm GU}}
\def\SU{{\rm SU}}
\def\U{{\rm U}}
\def\GO{{\rm GO}}
\def\GL{{\rm GL}}
\def\PGL{{\rm PGL}}
\def\GSO{{\rm GSO}}
\def\GSpin{{\rm GSpin}}
\def\GSp{{\rm GSp}}

\def\Gal{{\rm Gal}}
\def\SO{{\rm SO}}
\def\O{{\rm  O}}
\def\Sym{{\rm Sym}}
\def\sym{{\rm sym}}
\def\St{{\rm St}}
\def\Sp{{\rm Sp}}
\def\tr{{\rm tr\,}}
\def\ad{{\rm ad\, }}
\def\Ad{{\rm Ad\, }}
\def\rank{{\rm rank\,}}

\def\Ext{{\rm Ext}}
\def\Hom{{\rm Hom}}
\def\Alg{{\rm Alg}}
\def\GL{{\rm GL}}
\def\SO{{\rm SO}}
\def\G{{\rm G}}
\def\U{{\rm U}}
\def\St{{\rm St}}
\def\Wh{{\rm Wh}}
\def\RS{{\rm RS}}
\def\ind{{\rm ind}}
\def\Ind{{\rm Ind}}
\def\csupp{{\rm csupp}}

%\subjclass{Primary 11F70; Secondary 22E55}
\title[Exceptional poles for principal series representations of $\operatorname{GL}_n(\mathbb{R})$]{Exceptional poles of Archimedean Rankin--Selberg $L$-functions for principal series representations of $\operatorname{GL}_n(\mathbb{R})$}
\author[Yeongseong Jo, Santosh Nadimpalli and Akash Yadav]{Yeongseong Jo$^{\dagger}$, Santosh Nadimpalli$^{\dagger}$ and Akash Yadav$^{\dagger}$}
\thanks{$^{\dagger}$All authors made equal contributions and were listed alphabetically.}
\address{(Y. Jo) Department of Mathematics Education / Human-Centered Artificial Intelligence Research Institute, Ewha Womans University, Seoul 03760, Republic of Korea}
\email{yeongseong.jo@ewha.ac.kr, yeongseongjo@outlook.com}
\address{(S. Nadimpalli) Department of Mathematics and Statistics, Indian Institute of Technology Kanpur, Uttar Pradesh 208016, India}
\email{nsantosh@iitk.ac.in, nvrnsantosh@gmail.com}
\address{(A. Yadav) Human-Centered Artificial Intelligence Research Institute, Ewha Womans University, Seoul 03760, Republic of Korea}
\email{akaschampion@ewha.ac.kr, akaschampion@gmail.com}

\subjclass[2020]{Primary 11F70; Secondary 20G20, 22E45}
\keywords{Archimedean Bernstein--Zelevinsky derivatives, Archimedean Rankin--Selberg $L$-functions,  Exceptional Poles}

\begin{abstract}
We prove that for any pair of irreducible principal series representations 
$(\pi_1,\pi_2)$ of $\GL_n(\mathbb{R})$ in general position, the notions of exceptional pole of type~1 and type~2 coincide. Using this identification, we express the Rankin--Selberg $L$-function $L(s,\pi_1\times\pi_2)$ in terms 
of the exceptional $L$-factors attached to the irreducible constituents of the derivatives of $\pi_1$ and $\pi_2$.
\end{abstract}
\maketitle

\begin{section}{Introduction}

Let \(F\) be a local field of characteristic \(0\), and let \(\pi_1\) and \(\pi_2\) be irreducible generic representations of \(\GL_n(F)\).
When \(F\) is non-archimedean, Cogdell and Piatetski-Shapiro~\cite{CPS2017} developed a method for computing the local Rankin--Selberg \(L\)-functions \(L(s,\pi_1 \times \pi_2)\) in terms of \(L\)-functions attached to supercuspidal representations.
Their approach relies on the theory of Bernstein--Zelevinsky derivatives~\cite{BZ1976,BZ1977,Zel1980} together with a detailed analysis of exceptional poles.
More precisely, they express \(L(s,\pi_1 \times \pi_2)\) in terms of exceptional \(L\)-factors attached to derivatives of \(\pi_1\) and \(\pi_2\).
In this setting, the Schwartz space \(\mathcal{S}(F^n)\) admits a one-step filtration given by the subspace of functions vanishing at \(0\).
Consequently, there is only one notion of exceptional pole (see, e.g., \cite{CPS2017,Mat2015}).
In the archimedean framework, this corresponds to exceptional poles with level~\(0\) (see Subsection~\ref{s2}).

\par
Exceptional poles have been extensively studied when \(F\) is non-archimedean (see, e.g., \cite{CPS2017,Mat2015,Jo2018}), whereas the archimedean case remains far less understood.
A first step in this direction was taken by Chang and Cogdell~\cite{CC1999}, who studied the \(\eta\)-homology of representations, where \(\eta\) denotes the nilradical of a suitable parabolic subalgebra.
They showed that the \(\eta\)-homology of irreducible generic representations in general position is completely reducible.
By \emph{general position} we mean that the representations have regular infinitesimal character (see Section~\ref{s2}).

Using this result together with \cite[Definition~3.0.2]{AGS2015}, Chai~\cite{Chai2015} defined a notion of archimedean derivative as a component of the \(\eta\)-homology and formulated a representation-theoretic notion of exceptional poles of type~$2$ with level \(m\) for a nonnegative integer \(m\).
Exceptional poles of type~$1$ with level \(m\), on the other hand, are defined via Rankin--Selberg integrals \cite{JPSS1983}.
Unlike the non-archimedean case, the level structure in the archimedean setting arises from the infinite filtration of the Schwartz space.
Chai~\cite{Chai2015} conjectured that the two notions of exceptional poles with level \(m\) coincide.
This was proved at level~\(0\) by the third-named author~\cite{Yad2024}, namely when \(\pi_1 \widehat{\otimes} \pi_2\) is \(\GL_n(\mathbb{R})\)-distinguished.
Here \(\widehat{\otimes}\) denotes the completed projective tensor product of smooth Fréchet representations.

\par
The main result of this paper shows that the notions of type~\(1\) and type~\(2\) exceptional poles agree at every level when \(\pi_1\) and \(\pi_2\) are irreducible principal series representations in general position. 

\begin{thm}\label{1.1}
Let \(\pi_1\) and \(\pi_2\) be irreducible principal series representations of \(\GL_n(\mathbb{R})\) in general position. A complex number \(s=s_0\) is an exceptional pole of type~\(1\) with level \(m\) for the pair \((\pi_1,\pi_2)\) if and only if it is an exceptional pole of type~\(2\) with level \(m\).
\end{thm}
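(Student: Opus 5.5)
The plan is to reduce both notions of exceptional pole at level \(m\) to a statement about continuous \(\GL_n(\mathbb{R})\)-invariant functionals on \(\pi_1\widehat{\otimes}\pi_2\widehat{\otimes}\mathcal{S}_m\), where \(\mathcal{S}_m\) denotes the \(m\)-th graded piece of the filtration of \(\mathcal{S}(\mathbb{R}^n)\) by order of vanishing at \(0\), twisted by \(|\det|^{s}\). We then compute both sides explicitly for principal series.

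\emph{Type~1.} A type~\(1\) exceptional pole of level \(m\) at \(s_0\) means the following. The Rankin--Selberg integrals \(\Psi(s,W_1,W_2,\Phi)\), for \(\Phi\) vanishing to order \(m\) at \(0\), have a pole at \(s_0\), and the leading Laurent coefficient factors through the Taylor coefficients of order \(m\) of \(\Phi\) at \(0\). So the leading coefficient defines a nonzero functional
\[
\Lambda_{s_0}\colon \pi_1\widehat{\otimes}\pi_2\widehat{\otimes}\mathrm{Sym}^m(\mathbb{C}^n)^{\vee}\to\mathbb{C}.
\]
This functional is quasi-invariant under the mirabolic, and by the standard argument it is quasi-invariant under all of \(\GL_n(\mathbb{R})\), with character \(|\det|^{-s_0}\) together with the twist coming from the Taylor term of degree \(m\). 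Hence type~\(1\) produces a nonzero element of
\[
\mathrm{Hom}_{\GL_n(\mathbb{R})}\bigl(\pi_1\widehat{\otimes}\pi_2\widehat{\otimes}\mathrm{Sym}^m,\ |\det|^{-s_0}\bigr).
\]

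\emph{Type~2.} By Chai's definition via the \(\eta\)-homology and the Chang--Cogdell complete reducibility in general position, type~\(2\) of level \(m\) asks for the nonvanishing of exactly this Hom space, or equivalently of a derivative-level Hom space. So the first step is to record that, in general position, type~2 is equivalent to the nonvanishing of the Hom space above. This is the same reduction used at level \(0\) in \cite{Yad2024}.

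The two directions then go as follows.

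\emph{Type~1 \(\Rightarrow\) type~2.} This is essentially formal: the leading coefficient \(\Lambda_{s_0}\) is a nonzero invariant functional, so the Hom space is nonzero.

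\emph{Type~2 \(\Rightarrow\) type~1.} This is the hard step. Given a nonzero invariant functional, we must show the Rankin--Selberg integral actually has a pole of the right order with that residue. Here we use that \(\pi_i=\Ind_B^{G}(\chi_i)\) are principal series. By Frobenius reciprocity and Bruhat filtration (a Mackey-type analysis of \(B\times B\)-orbits on \(G\), with Borel's lemma controlling transversal derivatives, as in Aizenbud--Gourevitch), the invariant functionals are supported on open or closed orbit pieces. These correspond to explicit character identities \(\chi_{1,i}\chi_{2,w(i)}|\cdot|^{s_0}\) matching a polynomial character of degree \(m\). That is, \(s_0\) is a pole of some \(\Gamma\)-factor \(\Gamma_{\mathbb{R}}(s+\mu_i+\nu_j)\) at depth \(m\).

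In parallel, we compute the Rankin--Selberg integral on \(\mathcal{S}_m\). We realize Whittaker functions through Jacquet integrals and use the known factorization \(L(s,\pi_1\times\pi_2)=\prod L(s,\chi_{1,i}\chi_{2,j})\). The poles that survive after restricting \(\Phi\) to vanish to order \(m\) at \(0\) are exactly the poles at \(s_0=-\mu_i-\nu_j-m-2k\) with the appropriate parity. We then match these two lists; regularity of the infinitesimal character (general position) ensures that no cancellations occur between different pairs \((i,j)\).

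We expect the main obstacle to be the surjectivity direction. We must show that each invariant functional arising from the orbit analysis is realized as the leading coefficient of a Rankin--Selberg integral, i.e.\ that the derivative contributions genuinely produce poles of the integrals. Our plan is induction on \(n\) via the Bernstein--Zelevinsky-type derivatives: we restrict to the mirabolic, use the \(\eta\)-homology decomposition to reduce to pairs of smaller principal series, and invoke the level-\(0\) result of \cite{Yad2024} in the base case.
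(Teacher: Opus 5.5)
Your type~$1\Rightarrow$ type~$2$ direction is fine, since it follows immediately from the invariance of the leading coefficient $B_{s_0}$. The converse has a genuine gap, and the obstacle you name is not the right one.

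Type~1 at level $m$ is a \emph{vanishing} statement about the top-order Laurent coefficient $B_{s_0}$, taken over all $W,W',\Phi$. It is not a statement that a given invariant functional is realized by zeta integrals, so ``surjectivity'' would not finish the proof even if you had it. Since $B_{s_0}$ is by construction nonzero on $\mathcal{S}^0$, what you must rule out is that $B_{s_0}$ stays nonzero on $\mathcal{S}^M$ for every $M$. Once $B_{s_0}|_{\mathcal{S}^M}=0$ for some $M$, the point $s_0$ is of type~1 at some level $m_0$, hence of type~2 at level $m_0$. General position then forces $m_0=m$, because the identities $\mu_j=(\chi_j\alpha^{l_j})^{-1}|\cdot|^{-s_0}$ determine $l$.

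Your proposed substitute asserts that the poles surviving for $\Phi$ vanishing to order $m$ are exactly $s_0=-\mu_i-\nu_j-m-2k$. That is the $n=1$ Tate computation extrapolated, and it is false for $n\ge 2$. By the Iwasawa decomposition, for $\Phi\in\mathcal{S}^M$ with $M\gg0$ the inner integral over $a\in\mathbb{R}^\times$ is holomorphic near $s_0$. What survives are precisely the poles of the mirabolic period $\int_{N_n\backslash P_n}W(p)W'(p)|\det p|^{s-1}\,dp$, which do not depend on $\Phi$ at all; non-exceptional poles of order $<n$ persist in exactly this way. The missing idea is Proposition~\ref{3.1}: a top-order pole that survives on every $\mathcal{S}^M$ produces a nonzero element of $\Hom_{P_n}(\pi_1\hat{\otimes}\pi_2,|\det|^{1-s_0})$. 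So you must show this mirabolic Hom space vanishes whenever $s_0$ is a type~2 pole.

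That vanishing is Theorem~\ref{main-mirabolic-thm}, the technical core of the paper, and nothing in your plan supplies it. The paper's argument runs as follows.
\begin{enumerate}
\item Restrict $\pi_1$ and $\pi_2^\vee|\det|$ to $P_n$ and apply the Wu--Zhang Bernstein--Zelevinsky filtration (Proposition~\ref{principal-filtration}). It has finitely many pieces, each an inverse limit of $I^{d}E(\cdots)$ applied to twisted principal series tensor symmetric powers, with the Gelfand--Graev representation at the bottom.
\item Reduce via Lemma~\ref{pi-F-filtration} to maps between irreducible pieces, and match depths using Proposition~\ref{depth}.
\item Show that an isomorphism between such pieces would either violate general position or force $|\cdot|^{-1}$ to be algebraic.
\item Exclude the Gelfand--Graev-to-Gelfand--Graev case by indecomposability of $\pi|_{P_n}$ (Proposition~\ref{indecomposable}).
\end{enumerate}

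Two further points. First, you need the pole to have order exactly $n$ (Proposition~\ref{pole}), because $B_{s_0}$ is the coefficient at the maximal order. Your orbit analysis only yields ``a pole of some $\Gamma$-factor''. By contrast, the finite $B_n$-filtration of $\mathrm{Sym}^m(\mathbb{C}^n)$, together with irreducibility of the graded principal series, gives all $n$ relations at once (Proposition~\ref{principal-cong-inverse}). Second, your induction through derivatives, with the level-$0$ result as base case, cannot work as stated. It does not cover levels $m>0$ even for $n=1$; the paper handles $n=1$ directly with $\Phi(x)=x^m e^{-\pi x^2}$. Moreover, proper derivatives only detect poles of order $<n$, so they cannot control the order-$n$ coefficient.
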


Extending Theorem~\ref{1.1} to arbitrary irreducible generic representations of 
$\GL_n(\mathbb{R})$ is considerably more difficult. When discrete series 
representations occur in the inducing data, tensoring with finite-dimensional algebraic representations does not in general produce a finite $\GL_n(\mathbb{R})$-stable filtration with irreducible generic subquotients. For this reason, we restrict our attention to principal 
series representations in general position.

The proof of Theorem~\ref{1.1} is delicate, since exceptional poles may occur at multiple levels. We briefly outline the strategy of the proof. The key step is Proposition~\ref{3.1}. We show that if $s_0$ is a pole of $L(s,\pi_1 \times \pi_2)$ but not an exceptional pole of type~$1$ at any level, then the $\Hom$-space
\[
\Hom_{P_n(\mathbb{R})}\!\bigl(\pi_1 \hat{\otimes} \pi_2, |\det|^{1-s_0}\bigr)
\]
is nonzero, where $P_n(\mathbb{R})$ denotes the mirabolic subgroup of $\operatorname{GL}_n(\mathbb{R})$. This reduces the problem to showing that the $\Hom$-space vanishes in the presence of an exceptional pole of type~$2$. We show this vanishing by applying the archimedean Bernstein--Zelevinsky filtration introduced by Wu and Zhang~\cite{WZ2025} to the restrictions of $\pi_1$ and $\pi_2$ to the mirabolic subgroup.

Apart from their role in the proof of our main result, these mirabolic $\Hom$-spaces appear naturally in the theory of local Rankin--Selberg periods and zeta integrals, both in the non-archimedean setting \cite{JPSS1983, CPS2017} and in its archimedean analogue (see Subsection~\ref{s5.4}). 
Furthermore, these spaces are also of independent interest in branching problems (cf.~\cite{WZ2025}).

\par
As an application of Theorem~\ref{1.1}, we show that the inverse of the archimedean Rankin--Selberg \(L\)-function factors as the least common multiple of inverse exceptional \(L\)-factors attached to the irreducible constituents of the derivatives of \(\pi_1\) and \(\pi_2\). Analogous results have been established in the non-archimedean setting for the Rankin--Selberg $L$-function \cite[Theorem~2.1]{CPS2017}, the exterior square $L$-function \cite[Theorems~2.7 and~2.13]{Jo2018}, and the Bump--Friedberg $L$-function \cite[Proposition~4.5]{Mat2015}. 
To the best of our knowledge, this provides the first such description in the archimedean case. 
For definitions of $\mathrm{l.c.m}$ and \(L_{\mathrm{ex}}\), we refer the reader to Subsection~\ref{s5.2}.

\begin{thm}\label{1.2}
Let \(\pi_1\) and \(\pi_2\) be irreducible principal series representations of 
\(\GL_n(\mathbb{R})\) in general position. For each \(k\) with \(0 \le k < n\), write
\[
\pi_1^{(k)} = \bigoplus_i \pi_{1,i}^{(k)}, 
\qquad
\pi_2^{(k)} = \bigoplus_j \pi_{2,j}^{(k)},
\]
where the superscript \((k)\) denotes the \(k\)-th derivative (see Subsection~\ref{s5.2}), and
\(\pi_{1,i}^{(k)}\), \(\pi_{2,j}^{(k)}\) denote the irreducible constituents of these derivatives. Then
\[
L(s,\pi_1 \times \pi_2)^{-1}
=
\underset{\,0\le k < n,\; i,j}{\mathrm{l.c.m}}
\left\{
L_{ex}\!\left(s,\pi_{1,i}^{(k)} \times \pi_{2,j}^{(k)}\right)^{-1}
\right\}.
\]
\end{thm}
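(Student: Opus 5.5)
We will deduce Theorem~\ref{1.2} from Theorem~\ref{1.1} by the same bookkeeping as Cogdell--Piatetski-Shapiro in the non-archimedean case. The argument runs through the filtration of the family of Rankin--Selberg integrals by level. Write $I(\pi_1\times\pi_2)$ for the span of the archimedean zeta integrals $\Psi(s,W_1,W_2,\Phi)$. The Schwartz space $\mathcal S(\R^n)$ carries the infinite filtration by order of vanishing at $0$. Let $I_m$ denote the span of integrals with $\Phi$ vanishing to order at least $m$ at the origin, and set $I_\infty$ for $\Phi$ vanishing to infinite order at $0$. The integrals in $I_\infty$ are the mirabolic integrals $\int_{N_n\backslash P_n}W_1W_2|\det|^{s-1}$ up to smoothing. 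By definition, a pole of $I_m$ that is not a pole of $I_{m+1}$ is exceptional of type~1 with level $m$. The first step is therefore to record that
\[
L(s,\pi_1\times\pi_2)^{-1}=\mathrm{l.c.m}\bigl\{L^{(m)}_{ex}(s,\pi_1\times\pi_2)^{-1}\,(m\ge 0),\ L_{(0)}(s)^{-1}\bigr\}.
\]
Here the type~1 exceptional factors at each level account for the poles of the integrals with full $\Phi$. The factor $L_{(0)}$ generates the poles of the integrals with $\Phi$ vanishing to infinite order, equivalently of the mirabolic integrals for $(\pi_1|_{P_n},\pi_2|_{P_n})$. Only finitely many levels contribute, since $L$ has finitely many poles in vertical strips up to translates.

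Next, by Theorem~\ref{1.1}, each type~1 exceptional factor is replaced by the type~2 one. By Chai's definition, the type~2 factor is the exceptional factor $L_{ex}(s,\pi_1^{(0)}\times\pi_2^{(0)})$, with $\pi_i^{(0)}=\pi_i$ and the level recorded through the $\eta$-homology. Since $\pi_1,\pi_2$ are in general position, the $\eta$-homology is completely reducible by Chang--Cogdell. Hence the $k=0$ exceptional factor is the l.c.m.\ over the constituents. This handles the $k=0$ terms.

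For $L_{(0)}$ we use the archimedean Bernstein--Zelevinsky filtration of Wu--Zhang of $\pi_i|_{P_n}$. Its graded pieces are induced from $\pi_i^{(k)}\otimes\psi$ for $1\le k<n$ (tensored with the appropriate symmetric-power pieces in the archimedean setting). Restricting mirabolic integrals to these pieces reduces them, after unfolding, to Rankin--Selberg integrals for $\GL_{n-k}$ of the pairs $(\pi_{1,i}^{(k)},\pi_{2,j}^{(k)})$, shifted by the normalization built into the definition of derivatives. The constituents are again principal series in general position, since derivatives of principal series are built from sub-inductions. We then argue by induction on $n$: apply the theorem to $L(s,\pi_{1,i}^{(k)}\times\pi_{2,j}^{(k)})$ and collect the resulting exceptional factors of their derivatives. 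These are exactly the derivatives of order $>k$ of the $\pi_i$, by transitivity of derivatives for principal series in general position. The result is
\[
L_{(0)}(s)^{-1}=\underset{1\le k<n,\ i,j}{\mathrm{l.c.m}}\ L_{ex}(s,\pi_{1,i}^{(k)}\times\pi_{2,j}^{(k)})^{-1}.
\]
Combined with the $k=0$ terms, this gives the theorem.

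The main obstacle is this last step. We must show that the poles of the mirabolic family are exactly the union of those coming from the graded pieces of the Wu--Zhang filtration. The inclusion that every pole arises from some piece follows from a dévissage: a pole of $L_{(0)}$ gives a nonzero functional in $\Hom_{P_n}(\pi_1\hat\otimes\pi_2,|\det|^{1-s_0})$-type spaces, via Proposition~\ref{3.1}, which must be nonzero on some graded piece. The converse, that each piece's exceptional pole really occurs, requires lifting functionals through the filtration. For this we would use the general position hypothesis. It forces the relevant infinitesimal characters on different graded pieces to be distinct, so $\Ext^1$ between them vanishes and the functionals extend. The extension must also be checked to be compatible with the Hausdorff completions.
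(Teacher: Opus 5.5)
\textbf{Verdict: different route, with a genuine gap.} You follow the Cogdell--Piatetski-Shapiro route: peel off the exceptional part level by level, identify the rest with the mirabolic family $L_{(0)}$, compute $L_{(0)}$ from the Bernstein--Zelevinsky filtration, and induct on $n$. The paper does not do this, and your version breaks at exactly the step you flag.

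\textbf{The unfolding step has no mechanism.} You do not say how to ``restrict'' the integrals $\int_{N_n\backslash P_n}WW'|\det|^{s-1}$ to a graded piece of the Wu--Zhang filtration and unfold them into $\GL_{n-k}$ integrals for $(\pi^{(k)}_{1,i},\pi^{(k)}_{2,j})$, and no such mechanism is available off the shelf. The pieces are quotients, not spaces of functions. By Proposition~\ref{principal-filtration} they are inverse limits with graded pieces $I^{d}E\bigl(\prod_t(|\det|^{1/2}\sigma_t\otimes\Sym^{i_t}(\C^{k_t}))\bigr)$. So you would have to control asymptotic expansions of Whittaker functions \emph{including pole orders}, and also show that the $\Sym$-twisted pieces add nothing beyond the untwisted derivatives.

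\textbf{The d\'evissage cannot supply this.} The space $\Hom_{P_n}(\pi_1\hat\otimes\pi_2,|\det|^{1-s_0})$ is nonzero for \emph{every} $s_0$. At a holomorphic point it contains the value of the mirabolic integrals, which is nonzero on the Gelfand--Graev piece; at a pole it contains their leading Laurent coefficient. So nonvanishing of this space, or of its restriction to some graded piece, cannot locate poles, let alone their orders. A concrete case: take $\pi_2=\pi_1^{\vee}$ and the type~2 exceptional pole $s_0=0$. Composing the projections of $\pi_1$ and $\pi_1^{\vee}$ onto mutually contragredient summands of their first derivatives with the canonical pairing gives a nonzero element of this space. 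That form does not factor through the smooth contragredient, so the identification with $\Hom_{P_n}(\pi_1,\pi_2^{\vee}|\det|^{1-s_0})$ used in the proof of Theorem~\ref{main-mirabolic-thm} should be treated with care.

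\textbf{The converse direction is not established.} You invoke $\Ext^1$-vanishing via infinitesimal characters, but the graded pieces are $P_n$-modules, which have no infinitesimal character to separate them (the center of $U(\mathrm{Lie}\,P_n)$ is just the scalars). In any case, extending an invariant functional only produces a nonzero $\Hom$. It does not show that the zeta integrals have a pole of the required order.

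\textbf{Orders are not tracked.} The theorem is an identity of $\Gamma_{\R}$-products, i.e.\ a pointwise equality of pole orders. Your criterion ``a pole of $I_m$ that is not a pole of $I_{m+1}$'' only tracks pole sets. You never check that $L_{ex}(s,\pi_1\times\pi_2)$, which by Definition~\ref{Lex} has order exactly $n$ on $P_{ex}$, matches the order of $L$ there; that needs Propositions~\ref{pole} and~\ref{exceptional-intersect}. Also, the $\eta$-homology remark at $k=0$ is beside the point, since $\pi_i^{(0)}=\pi_i$ is irreducible.

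\textbf{The missing idea: no zeta-integral analysis is needed here.} The paper works directly with the factorization $L(s,\pi_1\times\pi_2)=\prod_{i,j}L(s,\mu_i\chi_j)$.
\begin{itemize}
\item Each factor has at most a simple pole.
\item By general position, for fixed $j$ at most one $i$ gives a pole at $s_0$. Hence a pole of order $k$ comes from $k$ pairs, which after reordering satisfy $\mu_r\chi_r=|\cdot|^{-s_0}\alpha^{-l_r}$ for $1\le r\le k$.
\item By Proposition~\ref{explicit-derivative}, $\Ind_{B_k}^{G_k}(\mu_1\otimes\cdots\otimes\mu_k)$ and $\Ind_{B_k}^{G_k}(\chi_1\otimes\cdots\otimes\chi_k)$ are constituents of the $(n-k)$-th derivatives.
\item Theorem~\ref{classification}, which is where Theorem~\ref{1.1} enters, makes $s_0$ an exceptional pole of order $k$ for this pair.
\item Conversely, an exceptional pole of such a pair yields $k$ polar factors of $L$.
\end{itemize}
This is Theorem~\ref{order-characterization}. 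The l.c.m.\ identity then reduces to $\operatorname{ord}_{s_0}L=\max_{k,i,j}\operatorname{ord}_{s_0}L_{ex}$. If your key step could be proved, your approach would be more structural and could plausibly extend beyond principal series. As it stands, it does not prove the statement.
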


In the non-archimedean context, \cite[Theorem 2.1]{CPS2017} is deduced from \cite[Proposition 2.3]{CPS2017}. However, our proof of Theorem~\ref{1.2} is self-contained and does not rely on results from \cite{Chai2015}.

\par
The paper is organized as follows. In Subsection \ref{s2}, we fix notation and recall the necessary definitions. In Subsection \ref{s3}, we study exceptional poles of type~\(2\) and their relation to poles of the Rankin--Selberg \(L\)-function. Section~\ref{s4}, where we prove Theorem~\ref{1.1} for \(n \ge 2\), is the technical core of the paper. Finally, in Section \ref{s5}, we handle the case $n=1$ in Subsection \ref{s5.1} and then complete the proof of Theorem \ref{1.2} in Subsection \ref{s5.2}.

\end{section}

\begin{section}{Notation and Preliminaries}
\begin{subsection}{Notation}\label{s2}

We fix notation to be used throughout this paper. We write $|\cdot|$ for the usual
absolute value on the field of real numbers $\mathbb{R}$. For $n\in\mathbb{N}$, let
$G_n=\GL_n(\mathbb{R})$, and denote by $Z_n$, $B_n$, and $N_n$ the subgroups of scalar, upper
triangular, and unipotent upper triangular matrices in $G_n$, respectively. Let \( P_n \) be the mirabolic subgroup of \( G_n \), consisting of matrices that stabilize the row vector \( e_n = (0, \dots, 0, 1) \in \mathbb{R}^n\) under the right multiplication action. We write $I_n$ to denote the $n \times n$ identity matrix. We denote by \( V_n \) the unipotent radical of the mirabolic subgroup \( P_n \subset GL_n(\mathbb{R}) \). Explicitly,
\[
V_n =
\left\{
\begin{pmatrix}
I_{n-1} & v \\
0 & 1
\end{pmatrix}
\,\,\middle|\,\, v \in \mathbb{R}^{n-1}
\right\}.
\]
We write \( \mathfrak{v}_n := \operatorname{Lie}(V_n) \) for its Lie algebra. Let $\mathbb{S}^1$ denote the multiplicative group of all complex numbers with absolute value $1$. Let \(K_n\) represent the standard maximal compact subgroup $\operatorname{O}(n)$ of $G_n$. For a topological space $X$, let $C(X)$ denote the space of complex-valued continuous functions on $X$.

Let $G$ be an almost linear Nash group. By a representation of $G$, we mean a
Fr\'echet representation which is smooth and of moderate growth under the
$G$-action. We denote the category of such representations by $\mathcal{S}\mathrm{mod}_G$.
We write $\Irr(G)$ for the set of isomorphism classes of irreducible objects
in $\mathcal{S}\mathrm{mod}_G$.
Throughout the paper, we abuse notation by identifying a representation
$\pi \in \Irr(G)$ with its underlying vector space $V_\pi$. Accordingly, we may write
$\pi$, $V_\pi$, or simply $V$ for this space.

Given two representations in $\mathcal{S}\mathrm{mod}_G$, we denote by $\hat{\otimes}$
their completed projective tensor product. For a vector space $V$ over $\mathbb{C}$,
let $C^{\infty}(G,V)$ denote the space of smooth $V$-valued functions on $G$.
%In the case $G = G_n$, we denote by $\Pi(G_n) \subset \Irr(G_n)$ the subset of irreducible square-integrable representations.

For $\pi\in \Irr(G_n)$, we write $\omega_\pi$ for its central character.
Let $P=P_{n_1,\ldots,n_k}$ be the standard parabolic subgroup of $G_n$
associated to the partition $n=n_1+\cdots+n_k$, and let $P=MU$ be its Levi
decomposition. Then $M$ is of the form
\[
M = G_{n_1}\times \cdots \times G_{n_k}.
\]
Set $\mathfrak{n}_{n_1,n_2,....,n_k}$ to be the Lie algebra of $U$.
We denote by $\overline{P}=M\overline{U}$ the standard parabolic subgroup
opposite to $P$, namely the parabolic subgroup with the same Levi factor $M$
and opposite unipotent radical.

For each $1\le i\le k$, let $\tau_i\in \Irr(G_{n_i})$, and set
$\sigma=\tau_1\otimes\cdots\otimes\tau_k$, viewed as a representation of
$M$. We denote by
\[
\tau_1 \times \cdots \times \tau_k
\]
the normalized parabolic induction
$\Ind_{P}^{G_n}(\sigma)$.
In the special case where $P=B_n$ is the Borel subgroup, so that
$M=G_1\times\cdots\times G_1$, the representations
$\tau_1 \times \cdots \times \tau_n$ with each $\tau_i$ a character of
$G_1=\mathbb{R}^\times$ are called \emph{principal series
representations}.

\par
Fix a nontrivial additive character $\psi:\mathbb{R}\to\mathbb{S}^1$, and define a
generic character (again denoted by $\psi$) $\psi :N_n(\mathbb{R})\to\mathbb{S}^1$ by
\[
\psi(u)=\psi\!\left(\sum_{i=1}^{n-1} u_{i,i+1}\right)
\]
for $u \in N_n(\mathbb{R})$. A representation $\pi\in \Irr(G_n)$ is said to be \emph{generic} if it
admits a nonzero Whittaker functional with respect to a generic character $\psi$ of
$N_n$. We denote by $\Irr_{\mathrm{gen}}(G_n)$ the subset of
generic representations in $\Irr(G_n)$. For $\pi\in \Irr_{\mathrm{gen}}(G_n)$, we denote by
$\mathcal{W}(\pi,\psi)$ its Whittaker model with respect to $\psi$, and by
$\pi^{\vee}$ the smooth contragredient of $\pi$. 
By \cite[Theorem~6.2f]{Vog1978}, every
irreducible principal series representation of $G_n$ is generic.

\par
Next, we define the notion of general position for principal series
representations, following \cite{CC1999}. Let
\[
\pi=\tau_1\times\cdots\times\tau_n
\]
be a principal series representation of $G_n$, where each
\[
\tau_i = |\cdot|^{s_i}\otimes \epsilon_i \in \Irr(G_1), \qquad (1\le i\le n),
\]
with $s_i \in \mathbb{C}$ and $\epsilon_i$ a character of $\{\pm1\}\subset \mathbb{R}^\times$.
We say that $\pi$ is in \emph{general position} if $s_i \notin \mathbb{Z}$ for $1\le i\le n$ and
$s_i - s_{i'} \notin \mathbb{Z}$ for $1\le i\ne i'\le n$. In this case, $\pi$ is irreducible \cite{SV1980}.

\par
Let $H \subseteq G$ be a closed subgroup and $(\sigma, V_\sigma)\in \mathcal{S}\mathrm{mod}_H$.
The (normalized) Schwartz induction $\mathcal{S}\mathrm{Ind}_H^G(V_\sigma)$ is defined as the space of
Schwartz sections of the tempered bundle
\[
\bigl(V_\sigma \otimes (\delta_H/\delta_G)^{1/2}\bigr)\times_H G,
\]
where $\delta_H$ and $\delta_G$ denote the modular characters of
$H$ and $G$, respectively. This description is algebro-geometric; see
\cite[\S 6]{CS2021} for details. For the reader's convenience, we also give
a representation-theoretic construction.

We regard $V_\sigma$ as a representation of $G$ with the trivial action.
Then $C^{\infty}(G,V_\sigma)$ becomes a smooth representation of $G$
under right translation. The unnormalized smooth induction is defined by
\[
I_H^G(V_\sigma)
:=
\{ \varphi \in C^{\infty}(G,V_\sigma)
\mid
\varphi(hg)=\sigma(h)\varphi(g)
\text{ for all } h\in H,\, g\in G \},
\]
which is a subrepresentation of $C^{\infty}(G,V_\sigma)$.

Let $\mathcal{S}(G,V_\sigma)_r$ denote the Fréchet space
$\mathcal{S}(G,V_\sigma)$ equipped with the right translation action of
$G$, given by
\[
(\pi(g)f)(x)=f(xg),
\qquad g,x\in G,\; f\in \mathcal{S}(G,V_\sigma).
\]
Define a $G$-equivariant continuous linear map
\[
\begin{aligned}
L:\mathcal{S}(G,V_\sigma)_r &\longrightarrow I_H^G(V_\sigma),\\
\varphi &\longmapsto
\left(g\longmapsto
\int_H \sigma(h)\varphi(h^{-1}g)\,d_rh\right),
\end{aligned}
\]
where $d_rh$ is a right invariant Haar measure on $H$.
We denote by $\operatorname{ind}_H^G(V_\sigma)$ the image of $L$.
Equivalently,
\[
\operatorname{ind}_H^G(V_\sigma)
\cong
\mathcal{S}(G,V_\sigma)_r/\ker(L),
\]
endowed with the quotient topology. By \cite[Proposition 6.11]{CS2021}, there is an isomorphism of
$G$-representations
\[
\mathcal{S}\mathrm{Ind}_H^G(V_\sigma)
\cong
\operatorname{ind}_H^G\!\left(V_\sigma\otimes(\delta_H/\delta_G)^{1/2}\right).
\]
Moreover, when $G/H$ is compact we have
\[
\operatorname{ind}_H^G(V_\sigma)=I_H^G(V_\sigma)
\]
by \cite[\S 6.3, Remark]{CS2021}. Thus Schwartz induction serves as the analogue of compact induction in the non-archimedean setting.

\par
Let $\pi$ be a representation of $G_k$ and $\sigma$ a representation of
$P_m$, where $k+m=n$. Let $P_{k,m}$ be the parabolic subgroup of $G_n$ corresponding to the partition $n=k+m$, and let $\overline{P_{k,m}}$ denote its opposite parabolic subgroup. We embed $G_k$ into $G_n$ as
the subgroup
\[
\begin{pmatrix}
* & 0 \\
0 & I_m
\end{pmatrix},
\]
and $P_m$ into $G_n$ as the subgroup
\[
\begin{pmatrix}
I_k & 0 \\
0 & *
\end{pmatrix}.
\]
Note that
\[
P_n \cap P_{k,m}
=
\left\{
\begin{pmatrix}
g & x \\
0 & p
\end{pmatrix}
\,\,\middle|\,\, g \in G_k,\ p \in P_m,\ x \in M_{k\times m}(\mathbb{R})
\right\},
\]
where $M_{k\times m}(\mathbb{R})$ denotes the space of $k\times m$ real matrices.
Moreover,
\[
G_k \times P_m =
\left\{
\begin{pmatrix}
g & 0 \\
0 & p
\end{pmatrix}
\,\,\middle|\,\,  g \in G_k,\ p \in P_m
\right\}
\subset P_n \cap P_{k,m}.
\]
Similarly, 
\[
P_n \cap \overline{P}_{k,m}
=
\left\{
\begin{pmatrix}
g & 0 \\
y & p
\end{pmatrix}
\,\,\middle|\,\,  g \in G_k,\ p \in P_m,\ y \in M_{(m-1)\times k}(\mathbb{R})
\right\},
\]
contains $G_k \times P_m.$

\begin{enumerate}
\item
The \emph{mirabolic induction} $\pi \times \sigma$ is defined by
\[
\mathcal{S}\mathrm{Ind}_{P_n \cap P_{k,m}}^{P_n}
(\pi \otimes \sigma),
\]
where $\pi \otimes \sigma$ is a representation of
$G_k \times P_m$, viewed as a representation of
$P_n \cap P_{k,m}$ by trivial extension.

\item
The \emph{opposite mirabolic induction} $\pi \,\overline{\times}\, \sigma$ for
$P_n$ is defined by
\[
\mathcal{S}\mathrm{Ind}_{P_n \cap \overline{P}_{k,m}}^{P_n}
(\pi \otimes \sigma),
\]
where $\pi \otimes \sigma$ is a representation of
$G_k \times P_m$, viewed as a representation of
$P_n \cap \overline{P}_{k,m}$ by trivial extension. 

\item
The \emph{Mackey induction} $I(\sigma)$ is defined by
\[
I(\sigma)
:= \mathcal{S}\mathrm{Ind}_{P_m V_{m+1}}^{P_{m+1}}
(\sigma \otimes \psi),
\]
For different choices of $\psi$ in the definition above, the
Mackey inductions are isomorphic. Moreover, when $\sigma$
is irreducible, $I(\sigma)$ is irreducible by \cite[Theorem~2.36]{WZ2025}.

\item
The \emph{trivial extension} $E(\pi)$ is defined as the
$P_{k+1}$-representation obtained by trivially extending $\pi$.
\end{enumerate}

We hope that our use of $\pi \times \sigma$ to denote both the \emph{normalized parabolic induction} and the \emph{mirabolic induction} will not cause confusion. The intended meaning should be clear from the groups involved.
Mirabolic and Mackey inductions satisfy the following associative law;
see \cite[Lemma~2.46]{WZ2025}.

\begin{prop}\label{2.1}
Let $\pi$ be a representation of $G_n$, $\tau$ a representation of $G_m$, and $\sigma$ a representation of $P_m$. Then there are natural isomorphisms in the category $\mathcal{S}\mathrm{mod}_{P_{n+m}}$ :
\begin{enumerate}[label=$(\it{\roman*})$]
    \item $\pi \times E(\tau) \;\cong\; E(\pi \times \tau)$,
    \item $\pi \times I(\sigma) \;\cong\; I(\pi \times \sigma)$.
\end{enumerate}
\end{prop}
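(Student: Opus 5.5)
Both isomorphisms will come from induction in stages for Schwartz induction (\cite[\S 6]{CS2021}) and from identifying the relevant subgroups of $P_{n+m}$. We take the form $\mathcal{S}\mathrm{Ind}_{H_1}^{G}\mathcal{S}\mathrm{Ind}_{H_2}^{H_1}\cong \mathcal{S}\mathrm{Ind}_{H_2}^{G}$, with the normalizing characters multiplying correctly, as given. We also use that inducing a representation pulled back from a quotient commutes with that pullback. In both parts we must check that the modular characters $(\delta_H/\delta_G)^{1/2}$ on the two sides agree.

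\emph{Part (i).} The left side is $\mathcal{S}\mathrm{Ind}_{P_{n+m}\cap P_{n,m}}^{P_{n+m}}(\pi\otimes E(\tau))$. The representation $\pi\otimes E(\tau)$ of $G_n\times P_m$ is trivial on $V_m\subset P_m$, and hence on $V_{n+m}\cap P_{n,m}$. The key observation is that $P_{n+m}\cap P_{n,m}\supset V_{n+m}$, since $V_{n+m}$ is the last column and lies in the unipotent radical together with the $P_m$-part. The quotient map $P_{n+m}\to P_{n+m}/V_{n+m}\cong G_{n+m-1}$ carries $P_{n+m}\cap P_{n,m}$ onto the parabolic $P_{n,m-1}$ of $G_{n+m-1}$. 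Under this map the inducing datum $\pi\otimes E(\tau)$ becomes $\pi\otimes\tau$ pulled back. Because $V_{n+m}$ is a normal subgroup acting trivially and contained in the inducing subgroup, Schwartz induction of a representation inflated from $H/V$ equals the inflation of the induction from $H/V$ to $G/V$. This gives $E(\mathcal{S}\mathrm{Ind}_{P_{n,m-1}}^{G_{n+m-1}}(\pi\otimes\tau))$. One caveat on indices: $\tau$ should be a representation of $G_{m-1}$ when $E(\tau)$ is a $P_m$-representation, consistent with the definition of $E$. Here $G/H$ is compact, so this is the normalized parabolic induction. The remaining work is checking that the normalizations match: the modular character of $P_{n+m}\cap P_{n,m}$ restricted to $G_n\times G_{m-1}$, divided by that of $P_{n+m}$, must equal $\delta_{P_{n,m-1}}$ up to the normalization built into the mirabolic induction. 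This is a short determinant computation.

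\emph{Part (ii).} The left side is $\mathcal{S}\mathrm{Ind}_{P_{n+m}\cap P_{n,m}}^{P_{n+m}}\bigl(\pi\otimes \mathcal{S}\mathrm{Ind}_{P_{m-1}V_m}^{P_m}(\sigma\otimes\psi)\bigr)$, where $\sigma$ is a $P_{m-1}$-representation. The first step is to rewrite the inner induction as $\mathcal{S}\mathrm{Ind}_{G_n\times P_{m-1}V_m}^{G_n\times P_m}(\pi\otimes\sigma\otimes\psi)$, trivially extended across the unipotent radical. Induction in stages then gives $\mathcal{S}\mathrm{Ind}_{H}^{P_{n+m}}(\pi\otimes\sigma\otimes\psi)$ with $H=(G_n\times P_{m-1}V_m)\ltimes M_{n\times m}$. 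The right side, again by stages, is $\mathcal{S}\mathrm{Ind}_{H'}^{P_{n+m}}(\pi\otimes\sigma\otimes\psi)$ with $H'=(P_{n+m-1}\cap P_{n,m-1})V_{n+m}$.

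The main obstacle is that $H\neq H'$ and the characters do not obviously match. In $H$ the last column is only partly free: its top $n$ entries lie in $M_{n\times m}$ with trivial action, and its lower entries lie in $V_m$ with character $\psi$. In $H'$ the whole last column $V_{n+m}$ carries $\psi$, which only sees the $(n+m-1,n+m)$ entry, and the remaining entries in the last column are trivial as well. So the two groups actually coincide: both consist of matrices whose upper-left $(n+m-1)\times(n+m-1)$ block lies in $P_{n,m-1}\cap P_{n+m-1}$ and whose last column is arbitrary. I will verify this set equality carefully, check that $\psi$ pulls back to the same character in both descriptions, and then compare the modular characters. If a twist by some $|\det|^{1/2}$ power appears, I will absorb it using the normalization conventions of \cite{WZ2025}, citing \cite[Lemma~2.46]{WZ2025} as the consistency check. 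Naturality in $\pi,\tau,\sigma$ is clear, since each step is functorial.
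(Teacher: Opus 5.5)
The paper gives no argument of its own here; it imports both isomorphisms from \cite[Lemma~2.46]{WZ2025}. Your induction-in-stages proof is a correct, self-contained substitute.

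\textbf{Indexing.} Your reindexing is right. With the paper's definitions, $E(\tau)$ for $\tau$ on $G_m$ and $I(\sigma)$ for $\sigma$ on $P_m$ are $P_{m+1}$-representations, so as printed both sides actually live on $P_{n+m+1}$.

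\textbf{The two parts.} Part (i) is inflation along $P_{n+m}\to P_{n+m}/V_{n+m}\cong G_{n+m-1}$, which carries $P_{n+m}\cap P_{n,m}$ onto $P_{n,m-1}$. In (ii), both sides do reduce to $\mathcal{S}\mathrm{Ind}_H^{P_{n+m}}(\pi\otimes\sigma\otimes\psi)$ with $H=(P_{n+m-1}\cap P_{n,m-1})V_{n+m}$. In either description the character $\psi$ reads only the $(n+m-1,n+m)$ entry.

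\textbf{Comparison.} The citation buys brevity and hides analytic inputs that you use without comment: that $\pi\hat\otimes(\cdot)$ commutes with Schwartz induction (this rests on nuclearity of Schwartz spaces), and that twisting by the character $\psi$ of $P_{n+m-1}V_{n+m}$ does too. Your route buys transparency about the indexing and the normalizations.

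\textbf{One correction.} Drop the plan to ``absorb'' a possible $|\det|^{1/2}$ twist. Such a twist would make the statement false, not absorbable, and in fact none arises. If $V$ is normal in $G$ and $V\subset H$, the $\operatorname{Ad}$-determinant on $\operatorname{Lie}(V)$ cancels in $\delta_H/\delta_G$, so normalizations survive inflation; and normalized Schwartz induction is transitive. Concretely, in (i) one has $\bigl(\delta_{P_{n+m}\cap P_{n,m}}/\delta_{P_{n+m}}\bigr)(\operatorname{diag}(g,h,1))=|\det g|^{m-1}|\det h|^{-n}=\delta_{P_{n,m-1}}(\operatorname{diag}(g,h))$ exactly.
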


Next, we recall the notion of depth of a representation of $P_n$ (\cite{AGS2015}).

\begin{defn}
Let $\sigma$ be a smooth representation of $P_n$. We define
\[
\Psi^{-}(\sigma)
:=
|\det|^{-1/2}
\otimes
\frac{\sigma}
{\operatorname{Span}\left\{ \alpha v - \psi(\alpha)v \mid v \in \sigma,\ \alpha \in \mathfrak{v}_n \right\} },
\]
and
\[
\Phi^{-}(\sigma)
:=
\varprojlim_{l}
\frac{\sigma}
{\operatorname{Span} \left\{ \kappa v \mid v \in \sigma,\ \kappa \in (\mathfrak{v}_n)^{\otimes l} \right\}}.
\]
Then $\Psi^{-}(\sigma)$ is naturally a representation of $P_{n-1}$, and
$\Phi^{-}(\sigma)$ is naturally a representation of $G_{n-1}$. The \emph{depth} of $\sigma$ is the maximal positive integer $k_0$ such that
\[
\Phi^{-}\!\bigl((\Psi^{-})^{\,k_0-1}(\sigma)\bigr) \neq 0.
\]
\end{defn}

Set $\Psi_0(\sigma) := |\det|^{1/2} \cdot \Psi^{-}(\sigma)$. We extend the action of this functor to representations $\pi$ of $G_n$ by setting $\Psi_0^0(\pi) := \pi|_{P_n}$ and $\Psi_0^k(\pi) := \Psi_0(\Psi_0^{k-1}(\pi))$ for $k \ge 1$.
%In contrast to \cite[Definition 2.6]{WZ2025}, we put
%$\Psi^0_0(\sigma)=\sigma|_{G_{n-1}}$ and $\Psi^{k-1}_0(\sigma)$ is taken with respect to the $G_{n-k}$-representation.
The following proposition is a consequence of \cite[Proposition 4.1]{WZ2025}.

\begin{prop}\label{depth}
Let $d \ge 1$ be an integer, and let $\tau$ be a representation of $G_{n-d}$. 
Then the representation $I^{d-1}E(\tau)$ of $P_n$ has depth $d$.
\end{prop}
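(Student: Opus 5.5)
We need to show that $\sigma := I^{d-1}E(\tau)$ has depth exactly $d$. By definition, this amounts to two facts:
\[
\Phi^{-}\bigl((\Psi^{-})^{d-1}(\sigma)\bigr)\neq 0,
\qquad
\Phi^{-}\bigl((\Psi^{-})^{k-1}(\sigma)\bigr)=0 \ \text{ for all } k>d .
\]
The plan is to compute the functors $\Psi^{-}$ and $\Phi^{-}$ on Mackey inductions and trivial extensions, using the archimedean Bernstein--Zelevinsky calculus of \cite[Proposition~4.1]{WZ2025}. We take from there four identities. For a representation $\rho$ of $P_m$ and a representation $\tau'$ of $G_m$:
\begin{enumerate}[label=$(\alph*)$]
\item $\Psi^{-}(I(\rho))\cong|\det|^{-1/2}\otimes\rho$ up to the normalizing twist built into $\Psi^-$; more precisely $\Psi_0(I(\rho))\cong\rho$.
\item $\Phi^{-}(I(\rho))=0$.
\item $\Psi^{-}(E(\tau'))=0$.
\item $\Phi^{-}(E(\tau'))\cong\tau'$, up to a character twist.
\end{enumerate}

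Heuristically, (c) and (d) hold because $\mathfrak{v}$ acts by zero on $E(\tau')$. Then no vector satisfies $\alpha v=\psi(\alpha)v$ with $\psi$ nontrivial, so the $\psi$-coinvariants vanish, while every power of $\mathfrak{v}$ acts by zero, so the completed coinvariants give back $\tau'$. Likewise, (a) and (b) hold because $I(\rho)$ is Schwartz-induced from $P_mV_{m+1}$. Its restriction to $V_{m+1}$ is a space of Schwartz functions on the open orbit of nontrivial characters of $V_{m+1}$. Taking $\psi$-coinvariants localizes at the point $\psi$ and returns $\rho$. Taking the completed coinvariants $\varprojlim_l$ of $\mathfrak{v}^{\otimes l}$ tests Taylor expansions at the trivial character, which lies outside the support. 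Since Schwartz functions on the complement vanish to infinite order there, we get $\Phi^{-}(I(\rho))=0$.

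With these identities the computation is an induction on $d$. Since $\Psi^{-}$ and $\Psi_0$ differ only by a character twist, and twisting by a character commutes with $I$ and $E$ and does not affect vanishing, (a) gives, for $0\le j\le d-1$,
\[
(\Psi^{-})^{j}(\sigma)\;\cong\;\chi_j\otimes I^{d-1-j}E(\tau)
\]
for suitable characters $\chi_j$. For $j<d-1$ the right-hand side is a Mackey induction, so (b) gives $\Phi^{-}=0$. For $j=d-1$ we obtain $\chi\otimes E(\tau)$, and (d) gives $\Phi^{-}\neq 0$, at least when $\tau\neq0$, which is implicitly assumed. At the next step, (c) gives $(\Psi^{-})^{d}(\sigma)=0$, so every higher iterate vanishes as well. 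Hence the maximal $k_0$ with $\Phi^{-}((\Psi^{-})^{k_0-1}(\sigma))\ne0$ is exactly $d$.

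The main obstacle is the rigorous justification of (b) in the Fréchet setting. One must check that the projective limit of the $\mathfrak{v}^{\otimes l}$-coinvariants of a Schwartz induction supported away from the trivial character is actually zero, rather than merely having zero Hausdorff quotient. One also needs $\Psi^-$ to commute with the twists and with the iterated structure $I(I(\cdots))$. For both points I would rely directly on \cite[Proposition~4.1]{WZ2025}, whose computation of the derivative functors on $I$ and $E$ is exactly this content. The remaining bookkeeping is only the normalization $\Psi_0=|\det|^{1/2}\Psi^-$, which does not affect vanishing.
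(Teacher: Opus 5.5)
Your proposal is correct and follows the paper's route. The paper's only justification is that the proposition is a consequence of \cite[Proposition~4.1]{WZ2025}, and your four identities ($\Psi_0\circ I\cong\mathrm{id}$, $\Phi^{-}\circ I=0$, $\Psi^{-}\circ E=0$, $\Phi^{-}\circ E\cong\mathrm{id}$, up to character twists), together with the iteration $(\Psi^{-})^{j}(I^{d-1}E(\tau))\cong\chi_j\otimes I^{d-1-j}E(\tau)$, are exactly the content of that citation spelled out. One cosmetic point: when $d=n$ you have reached $P_1$, where $\mathfrak{v}_1=0$, so (c) does not apply and $(\Psi^{-})^{d}(\sigma)$ is not defined. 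In that case the upper bound comes instead from the depth being at most $n$ by definition.
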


\par
Let $W_\mathbb{R}$ be the Weil group of $\mathbb{R}$. An admissible complex
representation of $W_\mathbb{R}$ is a continuous homomorphism
$\phi:W_\mathbb{R}\to\GL(V)$, where $V$ is a finite-dimensional complex vector
space; to such a $\phi$ one associates a local $L$-factor $L(s,\phi)$ as in
\cite{Tate1979}. $L(s,\phi)$ is a finite product of factors of the form 
$\Gamma_{\mathbb{R}}(s+s_0)$ for some $s_0\in\mathbb{C}$,
where $\Gamma_{\mathbb{R}}(s)=\pi^{-s/2}\Gamma\!\left(\frac{s}{2}\right)$
and $\Gamma(s)$ denotes the usual Gamma function. If $\pi_1$ and $\pi_2$ are irreducible representations of
$G_n$, and if
$\phi_1,\phi_2:W_\mathbb{R}\to\GL_n(\mathbb{C})$ are the admissible representations
associated to $\pi_1$ and $\pi_2$ by the local Langlands correspondence
\cite{Kna1994}, we define
\[
L(s,\pi_1\times\pi_2):=L(s,\phi_1\otimes\phi_2),
\]
and call it the \emph{Rankin--Selberg $L$-function} of $\pi_1$ and $\pi_2$.

\par
The notion of archimedean exceptional poles was introduced by Chai in \cite{Chai2015}. We begin with the representation-theoretic definition of exceptional poles of type~2.
Let $\pi_1,\>\pi_2\in\Irr_{\mathrm{gen}}(G_n)$. 
Let
$\mathcal{S}=\mathcal{S}(\mathbb{R}^n)$ denote the Schwartz space on $\mathbb{R}^n$.
The space $\mathcal{S}$
admits a canonical decreasing filtration
\begin{equation}
\label{schwartz-decreasing}
\mathcal{S}=\mathcal{S}^0\supseteq \mathcal{S}^1\supseteq \mathcal{S}^2\supseteq\cdots,
\end{equation}
where, for each integer $m\ge 0$, the subspace $\mathcal{S}^m$ consists of functions vanishing 
at the origin to order at least $m$; explicitly,
\[
\mathcal{S}^m=\bigl\{\,\phi\in\mathcal{S}\,|\, D^\alpha\phi(0)=0 
\ \text{for all multiindices }\alpha \text{ with }|\alpha|<m\,\bigr\}.
\]
For every $m\ge 0$ there is a $G_n$-equivariant isomorphism
\[
\mathcal{S}^m/\mathcal{S}^{m+1}\;\cong\;\operatorname{Sym}^m(\mathbb{C}^n),
\]
where the right-hand side denotes the $m$th symmetric power of the standard $n$-dimensional 
complex representation. This isomorphism is realized via the homogeneous 
components of the Taylor expansion at the origin.

\begin{defn}
Let $m\geq 0$. We say that $s=s_0$ is an \emph{exceptional pole of type $2$ with level $m$} for the pair $(\pi_1,\pi_2)$ if
\[
\operatorname{Hom}_{G_n}\!\bigl(\pi_1\hat{\otimes}\pi_2\hat{\otimes}\rho,\;\left|\operatorname{det}\right|^{-s_0}\bigr)\;\neq\;0,
\]
where $\rho=\operatorname{Sym}^m(\mathbb{C}^n)$.
\end{defn}

Exceptional poles of type~$1$ are defined using the Rankin--Selberg integrals of \cite{JPSS1983}.  
For
\[
W \in \mathcal{W}(\pi_1,\psi), \quad
W' \in \mathcal{W}(\pi_2,\psi^{-1}), \quad
\Phi \in \mathcal{S},
\]
the Rankin--Selberg integral is given by
\[
I(s,W,W',\Phi) \;=\; 
\int_{N_n \backslash G_n} 
   W(g)\, W'(g)\, \Phi(e_n g)\, |\det(g)|^s \, dg,
\quad s\in\mathbb{C}.
\]
This integral converges absolutely for $\operatorname{Re}(s)$ sufficiently large and admits a meromorphic continuation to the entire complex plane \cite{Jac2009, BP2021}. 
Jacquet \cite[Theorem 2.3]{Jac2009} proved that $I(s,W,W',\Phi)$ is a holomorphic multiple of $L(s,\pi_1 \times \pi_2)$, and that the quotient
\[
\frac{I(s,W,W',\Phi)}{L(s,\pi_1 \times \pi_2)}
\]
is of finite order in vertical strips.

\par
 Suppose that for some 
\[
W \in \mathcal{W}(\pi_1,\psi), \quad 
W' \in \mathcal{W}(\pi_2,\psi^{-1}), \quad 
\Phi \in \mathcal{S},
\] 
the point $s=s_0$ is a pole of order $d$ of the integral 
$I(s,W,W',\Phi)$. Then we have a Laurent expansion
\[
I(s,W,W',\Phi) = \frac{B_{s_0}(W,W',\Phi)}{(s-s_0)^d} + \cdots,
\]
where $B_{s_0}$ is a continuous trilinear form on 
$ \mathcal{W}(\pi_1,\psi) \times \mathcal{W}(\pi_2,\psi^{-1})  \times \mathcal{S}$ \cite{CPS2004} satisfying the invariance property
\begin{equation}
\label{trilinear-invariance}
B_{s_0}(g.W, g.W', g.\Phi) = |\det (g)|^{-s_0} B_{s_0}(W,W',\Phi)
\end{equation}
for all $g \in G_n$, $W \in \mathcal{W}(\pi_1,\psi)$, 
$W' \in \mathcal{W}(\pi_2,\psi^{-1})$, and $\Phi \in \mathcal{S}$.

\begin{defn}
We say that $s = s_0$ is an \emph{exceptional pole of type~1 with level $m$} for the pair $(\pi_1,\pi_2)$ if the corresponding 
trilinear form $B_{s_0}$ vanishes on $\mathcal{S}^{m+1}$ but is not identically zero on 
$\mathcal{S}^{m}$. 
\end{defn}

%\begin{remark}
    %In this case, we also say that $s_0$ is an exceptional pole of the Rankin--Selberg integral $I(s,W,W',\Phi)$ or of the $L$-function $L(s,\pi_1\times\pi_2)$.
%\end{remark}

The following was conjectured by Chai \cite{Chai2015}.

\begin{conj}
A complex number $s=s_0$ is an exceptional pole of type~1 with level $m$ for the pair $(\pi_1,\pi_2)$ 
if and only if it is an exceptional pole of type~2 with level $m$.
\end{conj}

\begin{remark}
The implication in the “if” direction of the conjecture—namely, that type~1 exceptional poles give rise to type~2 exceptional poles—follows directly from the definitions. The reverse implication, however, is the essential and genuinely nontrivial content of the conjecture.
\end{remark}
\end{subsection}

\begin{subsection}{Preliminary Results}\label{s3}

The following proposition plays a central role in the proof of Theorem~\ref{1.1}.

\begin{prop}\label{3.1}
Let $\pi_1,\>\pi_2\in\Irr_{\mathrm{gen}}(G_n)$. Assume that $s=s_0$ is a pole of $L(s,\pi_1 \times \pi_2)$, but not an exceptional pole of type~1 with level $m$ for any $m \geq 0$, for the pair $(\pi_1,\pi_2)$. Then there exist 
$K_n$-finite Whittaker functions $W \in \mathcal{W}(\pi_1,\psi)$ and $W' \in \mathcal{W}(\pi_2,\psi^{-1})$ such that 
\[
\lim_{s \to s_0} \frac{\displaystyle \int_{N_n \backslash P_n} 
W(p)\, W'(p)\, |\det(p)|^{s-1} \, dp}{L(s,\pi_1 \times \pi_2)} \;\neq\; 0.
\]
In particular, $\operatorname{Hom}_{P_n}\left(\pi_1\hat{\otimes}\pi_2,\>\left|\operatorname{det}\right|^{1-s_0}\right)\neq 0$.
\end{prop}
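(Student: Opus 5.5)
Let $d$ be the order of the pole of $L(s,\pi_1\times\pi_2)$ at $s_0$. The plan is to move to a normalized Rankin--Selberg integral, then collapse it to the mirabolic integral using Schwartz functions that vanish to high order at the origin.

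\textbf{Step 1: normalize and use the hypothesis.} Put
\[
\tilde I(s,W,W',\Phi)=\frac{I(s,W,W',\Phi)}{L(s,\pi_1\times\pi_2)} .
\]
By Jacquet's theorem \cite[Theorem 2.3]{Jac2009} this is entire. Since $s_0$ is not an exceptional pole of type~1 at any level, for each $m$ the leading Laurent coefficient $B_{s_0}$ attached to a pole of order $d$ does not separate $\mathcal{S}^m$ from $\mathcal{S}^{m+1}$. I would read this hypothesis as follows: for every $m$ there are $W,W'$ and $\Phi\in\mathcal{S}^m$ with $\tilde I(s_0,W,W',\Phi)\ne0$. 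This holds because $\tilde I(s_0,\cdot)$ is, up to a constant, exactly the $d$-th Laurent coefficient $B_{s_0}$. If $B_{s_0}$ vanished on some $\mathcal{S}^{M}$ while being nonzero on $\mathcal{S}^0$, take the least such $M$, which is at least $1$. Then $B_{s_0}$ vanishes on $\mathcal{S}^{M}$ and is nonzero on $\mathcal{S}^{M-1}$, so $s_0$ would be an exceptional pole of type~1 with level $M-1$. Moreover, the $L$-function is the gcd of the integrals \cite{Jac2009}, so $B_{s_0}\neq0$ on $\mathcal{S}$. Hence $B_{s_0}$ is nonzero on every $\mathcal{S}^m$.

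\textbf{Step 2: reduce to the mirabolic integral.} Use the Iwasawa-type decomposition $N_n\backslash G_n = N_n\backslash P_n\cdot Z_n\cdot K_n$ and write $g=pak$ with $a=tI_n$. This gives
\[
I(s,W,W',\Phi)=\int_{K_n}\int_{\mathbb{R}^\times}\int_{N_n\backslash P_n}W(pak)W'(pak)\Phi(t e_nk)\,|t|^{ns}|\det p|^{s-1}\,dp\,d^\times t\,dk .
\]
Choose $\Phi(x)=\phi_0(x)\,|x|^{2m}$ with $\phi_0$ compactly supported away from $0$, or more flexibly choose $\Phi$ of the form $\Phi(tek)=f(t)\xi(k)$ with $f\in C_c^\infty(\mathbb{R}^\times)$. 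Such $\Phi$ lies in $\mathcal{S}^m$ for all $m$ and realizes the translates $\int f(t)\,\pi_1\otimes\pi_2(a_t k)$.

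The resulting integral is the mirabolic integral $J(s,W,W')=\int_{N_n\backslash P_n}W W'|\det|^{s-1}$ applied to the smoothed vectors $\pi_1(f\xi)W$ and $\pi_2(f\xi)W'$, times the central character factor. Jacquet also shows that $J(s,W,W')/L(s)$ is entire, and that the ideal generated by $J$ equals that generated by $I$. The $K_n$-finite case reduces to this by density and continuity of the normalized functional in $W,W'$.

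\textbf{Step 3: conclude.} The normalized value at $s_0$ of a $\Phi$ supported off the origin is therefore a superposition of values $\tilde J(s_0,\cdot,\cdot)=\lim J/L$. So if $\tilde J(s_0,W,W')=0$ for all $K_n$-finite $W,W'$, then $\tilde I(s_0,W,W',\Phi)=0$ for every $\Phi\in C_c^\infty(\mathbb{R}^n\setminus\{0\})$.

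The main obstacle is passing from such $\Phi$ to all of $\mathcal{S}^m$ for large $m$. My approach is to use the finite order of $\tilde I$ in vertical strips together with continuity of $B_{s_0}$ on $\mathcal{S}$. The space $C_c^\infty(\mathbb{R}^n\setminus 0)$ is dense in $\mathcal{S}^m$ in a topology for which $\tilde I(s_0,\cdot)$ is continuous once $m$ exceeds a bound determined by the exponents of $\pi_1,\pi_2$ and $\mathrm{Re}\, s_0$. Concretely, $\Phi\in\mathcal{S}^m$ gives $|\Phi(te_nk)|\ll|t|^m$, so for $m$ large the integral converges absolutely near $s_0$ and is a continuous functional. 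Then $\tilde I(s_0,\cdot)$ would vanish on $\mathcal{S}^m$, contradicting Step 1. Hence some $K_n$-finite pair $W,W'$ gives a nonzero limit.

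Finally, $(W,W')\mapsto\lim_{s\to s_0}J(s,W,W')/L(s)$ extends continuously to $\pi_1\hat\otimes\pi_2$ and is $(P_n,|\det|^{1-s_0})$-equivariant by the change of variables $p\mapsto pp_0$. This gives the nonzero Hom.
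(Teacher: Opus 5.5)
Your Step~1 and the closing equivariance argument are fine. Step~3, which you correctly identify as the crux, rests on a false claim. For $\Phi\in\mathcal{S}^m$, the bound $|\Phi(te_nk)|\ll|t|^m$ only controls the one-dimensional integral over the centre. The integral over $N_n\backslash P_n$ does not involve $\Phi$ at all, and that is exactly where the pole lives. So $I(s,W,W',\Phi)$ does not converge absolutely near $s_0$ for $\Phi\in\mathcal{S}^m$, however large $m$ is. Indeed, if it did, $I(s,W,W',\Phi)$ would be holomorphic at $s_0$ for all such $\Phi$. Then $B_{s_0}$ would vanish on $\mathcal{S}^m$, contradicting your own Step~1. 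Consequently nothing in your argument supplies a topology on $\mathcal{S}^m$ in which $\Phi\mapsto\tilde I(s_0,W,W',\Phi)$ is continuous and $C_c^\infty(\mathbb{R}^n\setminus\{0\})$ is dense. The tools you invoke do not provide one. In the Schwartz topology, the closure of $C_c^\infty(\mathbb{R}^n\setminus\{0\})$ is only $\bigcap_m\mathcal{S}^m$, the functions flat at the origin, and Step~1 says nothing about that space. Finite order in vertical strips plays no role here.

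The missing idea is the finite separation of variables used in the paper, which is already latent in your Step~2. For $K_n$-finite $W,W'$, write $W(pk)=\sum_i f_i(k)W_i(p)$ and $W'(pk)=\sum_j f'_j(k)W'_j(p)$. Since $tI_n$ is central and $e_np=e_n$, you get
\[
I(s,W,W',\Phi)=\sum_{i,j}J(s,W_i,W'_j)\,T_{ij}(s,\Phi),\qquad
T_{ij}(s,\Phi)=\int_{\mathbb{R}^\times}\omega_{\pi_1}\omega_{\pi_2}(t)\,|t|^{ns}\int_{K_n}f_i(k)f'_j(k)\,\Phi(te_nk)\,dk\,d^\times t.
\]
Every $T_{ij}$ is holomorphic near $s_0$ once $\Phi\in\mathcal{S}^m$ with $m\gg0$. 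Now take $\Phi\in\mathcal{S}^m$ and $W,W'$ with $B_{s_0}(W,W',\Phi)\neq0$, making $W,W'$ $K_n$-finite by continuity of $B_{s_0}$. Then $I$ has a pole of order $d$, so some $J(s,W_i,W'_j)$ must have a pole of order $d$, which gives $\lim_{s\to s_0}J(s,W_i,W'_j)/L(s,\pi_1\times\pi_2)\neq0$ directly. This is the paper's proof, and with this identity the detour through $C_c^\infty(\mathbb{R}^n\setminus\{0\})$ and density is unnecessary.

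Two side remarks. First, for $\Phi(te_nk)=f(t)\xi(k)$ you get a Tate factor times the diagonal average $\int_{K_n}\xi(k)J(s,\pi_1(k)W,\pi_2(k)W')\,dk$. That is not $J$ evaluated at separately smoothed vectors. Second, you should drop the claim that the $J$'s generate the same ideal as the $I$'s. If it held, the conclusion would hold at every pole of $L$ without the non-exceptional hypothesis. That would contradict Theorem~\ref{main-mirabolic-thm} at exceptional poles.
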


\begin{proof}
Suppose $s=s_0$ is a pole of order $d$ of $L(s,\pi_1\times\pi_2)$ and is not exceptional of type $1$ for $(\pi_1,\pi_2)$. Then the local Rankin--Selberg integral admits the Laurent expansion
\[
I(s, W, W', \Phi) \;=\; \frac{B_{s_0}(W,W',\Phi)}{(s-s_0)^d} + \cdots ,
\]
where $B_{s_0}$ is continuous on $ \mathcal{W}(\pi_1,\psi) \times \mathcal{W}(\pi_2,\psi^{-1})  \times \mathcal{S}$ and satisfies the invariance property
\[
B_{s_0}(g \cdot W, g \cdot W', g \cdot \Phi) \;=\; \left|\operatorname{det}(g)\right|^{-s_0}B_{s_0}(W,W',\Phi).
\]
Since $s=s_0$ is not exceptional, for any $m \geq 0$ there exists $\Phi \in \mathcal{S}^m$ such that 
$B_{s_0}(W,W',\Phi) \neq 0$ for some $W, W'$. In particular, $I(s, W, W', \Phi)$ has a pole of order $d$ at $s=s_0$. By continuity of $B_{s_0}$ we may assume $W, W'$ are both $K_n$-finite.

Using the Iwasawa decomposition,
\[
I(s, W, W', \Phi)
= \int_{K_n} \int_{N_n \backslash P_n} 
W(pk)\, W'(pk)\, |\det (p)|^{s-1}
\int_{\mathbb{R}^\times} \omega(a)\omega'(a)\, |a|^n \Phi(\epsilon_n a k)\, da \, dp \, dk.
\]
Let $\{W_i\}$ be a basis of the $K_n$-span of $W$. Similarly, let $\{W'_i\}$ denote a basis of the $K_n$-span of $W'$.
Then
\[
W(gk) = \sum_i f_i(k)\, W_i(g) \quad \text{and} \quad W'(gk) = \sum_j f'_j(k)\, W'_j(g)
\]
for some $f_i \in C(K_n)$ and $f'_j \in C(K_n)$, respectively.
Substituting, we obtain
\begin{multline}
\label{decomposition}
I(s, W,W',\Phi) 
=\sum_{i,j}
\int_{N_n \backslash P_n} W_i(p)\, W'_j(p)\, |\det (p)|^{s-1}
 \\
\times 
\int_{\mathbb{R}^{\times}} \omega(a)\omega'(a)\, |a|^{ns}
\Big( \int_K f_i(k) f'_j(k)\, \Phi(\epsilon_n a k)\, dk \Big)
\, da \, dp.
\end{multline}

\noindent
We proceed as in \cite[Section~5]{Chai2015} and choose $m$ sufficiently large so that,
for $\Phi \in \mathcal{S}^m$, the function
\[
\int_{\mathbb{R}^{\times}} \omega(a)\omega'(a)\lvert a\rvert^{ns}
\int_{K} f_i(k)f'_j(k)\Phi(e_n ak)\,dk\,da
\]
is holomorphic in a half-plane containing $s_0$. Consequently, the only possible source of a pole at $s=s_0$ comes from
\[
\sum_{i,j} \int_{N_n \backslash P_n} W_i(p)\, W'_j(p)\, |\det(p)|^{s-1}\, dp.
\]
Since this sum has a pole of order $d$, at least one summand
\[
\int_{N_n \backslash P_n} W_i(p)\, W'_j(p)\, |\det (p)|^{s-1}\, dp
\]
must itself have a pole of order $d$ at $s=s_0$. The final statement of the proposition follows from the continuity of the
bilinear form
\[
(W,W') \longmapsto 
\lim_{s\to s_0}
\frac{\displaystyle \int_{N_n\backslash P_n}
W(p)\,W'(p)\,|\det(p)|^{s-1}\,dp}{L(s,\pi_1\times\pi_2)},
\]
on $\mathcal{W}(\pi_1,\psi)\times \mathcal{W}(\pi_2,\psi^{-1})$, established in \cite[Proposition~4.2]{Chai2015}.
\end{proof}

Let $\pi_2 \;=\; \chi_1\times\chi_2\dots\times\chi_n$
be an irreducible principal series representation of \(G_n\) in general position, where each \(\chi_j : \mathbb{R}^\times \to \mathbb{C}^\times\) is a character. Consider the finite-dimensional representation 
\(\rho = \operatorname{Sym}^m(\mathbb{C}^n)\) of \(G_n\).  
Let \(e_1, \dots, e_n\) denote the standard basis of \(\mathbb{C}^n\).  
A basis of \(\rho\) is given by the monomials
\[
e^l := e_1^{l_1} \cdots e_n^{l_n}, 
\quad l = (l_1, \dots, l_n), \quad l_j \ge 0, \quad |l| := l_1 + \cdots + l_n = m.
\]

For a diagonal matrix \(t = \operatorname{diag}(t_1, \dots, t_n) \in T_n \subset B_n\), we have
\[
t \cdot e^l = \prod_{j=1}^n t_j^{\,l_j}\, e^l.
\]
Thus, \(e^l\) affords the one-dimensional representation \(\mathbb{C}_l\) of \(B_n\), defined by
\[
\mathbb{C}_l : \operatorname{diag}(t_1, \dots, t_n) \mapsto \prod_{j=1}^n t_j^{\,l_j},
\]
with the unipotent radical \(N_n\) acting trivially.
Order the multi-indices lexicographically:
\[
l^{(1)} <_{\mathrm{lex}} l^{(2)} <_{\mathrm{lex}} \cdots <_{\mathrm{lex}} l^{(N)}, 
\qquad N = \binom{n+m-1}{m}.
\]
Let 
\[
F^i \rho := \operatorname{Span}\{ e^{\,l^{(1)}}, \dots, e^{\,l^{(i)}} \}, \qquad i = 1, \dots, N,
\]
so that
\[
0 = F^0 \rho \subseteq F^1 \rho \subseteq \cdots \subseteq F^N \rho = \rho
\]
is a \(B_n\)-stable filtration with successive quotients
\[
F^i \rho / F^{i-1} \rho \;\cong\; \mathbb{C}_{\,l^{(i)}}.
\]
Since \(\pi_2 = \chi_1 \times \cdots \times \chi_n = \operatorname{Ind}_{B_n}^{G_n} (\chi_1 \otimes \cdots \otimes \chi_n)\), we have
\[
\pi_2 \hat{\otimes} \rho \;\cong\; \operatorname{Ind}_{B_n}^{G_n} \big( (\chi_1 \otimes \cdots \otimes \chi_n) \otimes \rho|_{B_n} \big).
\]
Tensoring the \(B_n\)-stable filtration of \(\rho\) with \((\chi_1 \otimes \cdots \otimes \chi_n)\) inside \(B_n\) gives a filtration
\[
0 = (\chi_1 \otimes \cdots \otimes \chi_n) \otimes F^0 \rho \subseteq \cdots \subseteq (\chi_1 \otimes \cdots \otimes \chi_n) \otimes F^N \rho = (\chi_1 \otimes \cdots \otimes \chi_n) \otimes \rho|_{B_n}.
\]
By exactness of induction, this induces a \(G_n\)-stable filtration
\begin{equation}
\label{M-filtration}
0 = M_0 \;\subseteq\; M_1 \;\subseteq\; \cdots \;\subseteq\; M_N = \pi_2 \hat{\otimes} \rho,
\end{equation}
where
\[
M_i := \operatorname{Ind}_{B_n}^{G_n} \Big( (\chi_1 \otimes \cdots \otimes \chi_n) \otimes F^i \rho \Big), 
\qquad i = 1, \dots, N.
\]
whose graded pieces are
\[
\sigma_i \coloneqq (\pi_2 \hat{\otimes} F^i \rho )/(\pi_2 \hat{\otimes} F^{i-1} \rho) 
\;\cong\; \operatorname{Ind}_{B_n}^{G_n} \Big( (\chi_1 \otimes \cdots \otimes \chi_n) \otimes \mathbb{C}_{l^{(i)}} \Big).
\]
Explicitly, for a multi-index \(l = (l_1, \dots, l_n)\), define
\[
\chi_j^{(l)}(a) := \chi_j(a)\, a^{\,l_j}, \qquad a \in \mathbb{R}^\times,
\]
so that each graded piece can be written as a principal series
\[
\sigma_i \;\cong\; \chi_1^{(l^{(i)})} \times \cdots \times \chi_n^{(l^{(i)})}.
\]

Now let $\pi_1$ be an irreducible principal series representation of $G_n$ in general position. Assume that $s=0$ is an exceptional pole of type $2$ with level $m\geq0$ for the pair $(\pi_1,\pi_2)$. Recall that this means
$$\operatorname{Hom}_{G_n}\!\bigl(\pi_1\hat{\otimes}\pi_2\hat{\otimes}\rho,\;\mathbb{C}\bigr)\;\neq\;0.$$
\begin{lemma}
There exists an integer $1\le i\le N$ such that
\[
\operatorname{Hom}_{G_n}\!\bigl(\sigma_i,\;\pi_1^{\vee}\bigr)\;\neq\;0.
\]
\end{lemma}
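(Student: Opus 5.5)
Since $\operatorname{Hom}_{G_n}(\pi_1\hat{\otimes}\pi_2\hat{\otimes}\rho,\mathbb{C})\neq 0$, we first convert this nonzero functional into a nonzero $G_n$-map out of $\pi_2\hat{\otimes}\rho$. Fix a nonzero invariant continuous trilinear form $\Lambda$ on $\pi_1\times(\pi_2\hat{\otimes}\rho)$. For each $v\in\pi_2\hat{\otimes}\rho$, the functional $u\mapsto\Lambda(u\otimes v)$ is continuous on $\pi_1$, so $\Lambda$ gives a nonzero $G_n$-equivariant map
\[
T:\pi_2\hat{\otimes}\rho\longrightarrow (\pi_1)^{*},
\]
where $(\pi_1)^*$ is the continuous dual. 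Its image consists of vectors whose orbit map is smooth, since the source is a smooth representation and $T$ is continuous. Hence $T$ lands in the smooth contragredient, identified with $\pi_1^{\vee}$ (Casselman--Wallach globalization of the dual Harish-Chandra module, since $\pi_1$ is irreducible admissible). Thus $\operatorname{Hom}_{G_n}(M_N,\pi_1^{\vee})\neq 0$, using the filtration \eqref{M-filtration}. If needed, I would make this step rigorous by passing to $K_n$-finite vectors: the restriction of $T$ to the Harish-Chandra module of $M_N$ is a nonzero $(\mathfrak g,K_n)$-map into the $K_n$-finite dual of $\pi_1$. Since $M_N$ has finite length, with principal series subquotients, the Casselman--Wallach equivalence then gives a nonzero continuous $G_n$-map into $\pi_1^\vee$.

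Next I run a dévissage along $0=M_0\subseteq\cdots\subseteq M_N$. Let $i$ be the smallest index with $T|_{M_i}\neq 0$; it exists because $T|_{M_N}=T\neq 0$ and $T|_{M_0}=0$. Then $T|_{M_{i-1}}=0$, so $T|_{M_i}$ factors through $M_i/M_{i-1}\cong\sigma_i$ and gives a nonzero element of $\operatorname{Hom}_{G_n}(\sigma_i,\pi_1^{\vee})$. For this I need the subquotient identification
\[
M_i/M_{i-1}\cong \operatorname{Ind}_{B_n}^{G_n}\big((\chi_1\otimes\cdots\otimes\chi_n)\otimes \mathbb{C}_{l^{(i)}}\big)\cong\sigma_i
\]
as topological $G_n$-representations. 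This follows from exactness of smooth induction from the cocompact subgroup $B_n$, already invoked in the text, and from the open mapping theorem for Fréchet spaces, which makes the quotient topology agree with the induced one.

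The only delicate point is the first step: making sure the functional lands in $\pi_1^{\vee}$ rather than in a larger dual space, and that the quotient maps are closed so that the factorization through $\sigma_i$ is continuous. Both follow from standard Casselman--Wallach theory, because every object involved is of finite length and of moderate growth. I expect the identification with $\pi_1^{\vee}$ to require the most care. My first approach would be the $K_n$-finite reduction described above, since the dévissage step is then purely algebraic.
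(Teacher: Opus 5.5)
Your proposal is correct and follows the paper's argument. Both take a nonzero $G_n$-map $M_N=\pi_2\hat{\otimes}\rho\to\pi_1^{\vee}$ and choose the minimal $i$ with $T|_{M_i}\neq 0$, so that the map factors through $M_i/M_{i-1}\cong\sigma_i$. The one difference is that you justify the passage from the invariant functional to a continuous map into $\pi_1^{\vee}$ (via $K_n$-finite vectors and the Casselman--Wallach equivalence), while the paper takes this step as immediate from the assumption.
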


\begin{proof}
By our assumption, there exists a nonzero $G_n$-equivariant continuous linear map
\[
\Phi\in\operatorname{Hom}_{G_n}(M_N,\pi_1^{\vee}).
\]
Since the $G_n$-stable filtration is finite, let $i\ge1$ be minimal such that $\Phi(M_i)\neq0$.
Then $\Phi|_{M_{i-1}}=0$, and hence $\Phi$ descends to a well-defined nonzero
$G_n$-equivariant map
\[
\overline{\Phi}:\;M_i/M_{i-1}\;\cong\;\sigma_i\;\longrightarrow\;\pi_1^{\vee}.
\]
This proves the claim.
\end{proof}

As $\pi_2$ is in general position, $\sigma_i$'s are irreducible and thus there exists $1\leq i\leq N$ such that
\[
\pi_1 \;\cong\; \sigma_i^\vee \;\cong\; 
\operatorname{Ind}_{B_n}^{G_n}\Bigl( (\chi_1^{(l^{(i)})})^{-1}, \dots, (\chi_n^{(l^{(i)})})^{-1} \Bigr)
\;.
\]
Thus, we obtain an explicit description of the representation $\pi_1$. We summarize this conclusion in the following proposition.

\begin{prop}
\label{principal-cong-inverse}
     Let $\pi_1$ and $\pi_2 \;=\; \chi_1\times\chi_2 \times \dots\times\chi_n$ be two irreducible principal series representations of $G_n$ in general position. Let $s=s_0$ be an \emph{exceptional pole of type $2$ with level $m$} for the pair $(\pi_1,\pi_2)$. Then there exists $1\leq i\leq N$ such that 
     \[
\pi_1 \;\cong\;  
\operatorname{Ind}_B^G\Bigl( (\chi_1^{(l^{(i)})})^{-1}|\cdot|^{-s_0}, \dots, (\chi_n^{(l^{(i)})})^{-1}|\cdot|^{-s_0} \Bigr)
\;.
\]
\end{prop}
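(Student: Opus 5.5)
The plan is to reduce to the case $s_0=0$, which the discussion just before the statement already handles, by absorbing the character $|\det|^{-s_0}$ into $\pi_1$. Put $\pi_1' := \pi_1\otimes|\det|^{s_0}$. Since $|\det|^{s_0}$ is a character of $G_n$, we have
\[
\operatorname{Hom}_{G_n}\!\bigl(\pi_1\hat{\otimes}\pi_2\hat{\otimes}\rho,\;|\det|^{-s_0}\bigr)
\;=\;
\operatorname{Hom}_{G_n}\!\bigl(\pi_1'\hat{\otimes}\pi_2\hat{\otimes}\rho,\;\mathbb{C}\bigr).
\]
So $s=s_0$ is an exceptional pole of type~2 with level $m$ for $(\pi_1,\pi_2)$ exactly when $s=0$ is one for $(\pi_1',\pi_2)$.

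Next I would check that $\pi_1'$ is again an irreducible principal series representation. If $\pi_1=\mu_1\times\cdots\times\mu_n$, then $\pi_1'\cong \mu_1|\cdot|^{s_0}\times\cdots\times\mu_n|\cdot|^{s_0}$, because twisting by a character of $G_n$ commutes with normalized induction. Irreducibility is preserved under such a twist. The general-position hypothesis on $\pi_1'$ itself is not needed: in the preceding argument only $\pi_2$ being in general position matters, since that is what makes the graded pieces $\sigma_i$ irreducible.

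Then I apply the preceding lemma to $(\pi_1',\pi_2)$ with the filtration \eqref{M-filtration} of $\pi_2\hat{\otimes}\rho$. A nonzero $G_n$-invariant functional on $\pi_1'\hat{\otimes}\pi_2\hat{\otimes}\rho$ gives a nonzero map $M_N\to(\pi_1')^{\vee}$; justifying this identification of invariant functionals with maps into the smooth contragredient is the one point needing care, and it should follow from admissibility together with the standard Casselman--Wallach duality. The lemma then yields some $i$ with $\operatorname{Hom}_{G_n}(\sigma_i,(\pi_1')^\vee)\neq0$.

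Both $\sigma_i$ and $(\pi_1')^\vee$ are irreducible, so this nonzero map is an isomorphism and $\pi_1'\cong\sigma_i^\vee$. The contragredient of a principal series is the principal series induced from the inverse characters, so
\[
\pi_1'\cong (\chi_1^{(l^{(i)})})^{-1}\times\cdots\times(\chi_n^{(l^{(i)})})^{-1}.
\]
Twisting back by $|\det|^{-s_0}$ gives the claimed description of $\pi_1$. I expect no real obstacle here; the only nonformal inputs are the irreducibility of the $\sigma_i$ and the duality step noted above.
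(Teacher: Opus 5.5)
Your proposal is correct and follows the paper's argument. The paper treats $s_0=0$ by applying the lemma to the filtration \eqref{M-filtration}, then uses irreducibility of the $\sigma_i$ (from general position of $\pi_2$) to get $\pi_1\cong\sigma_i^\vee$; the general $s_0$ is the same twist by $|\det|^{s_0}$ that you make explicit. Your two side remarks are accurate refinements of points the paper leaves implicit: only irreducibility of $\pi_1'$, not its general position, is needed, and the passage from invariant functionals to $\operatorname{Hom}_{G_n}(M_N,(\pi_1')^{\vee})$ deserves a word of justification.
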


This also allows us to conclude that the Rankin--Selberg $L$-function of $\pi_1$ and $\pi_2$ has a pole at $s=s_0$. 

\begin{prop}\label{pole}
    Let $\pi_1$ and $\pi_2$ be two irreducible principal series representations of $G_n$ in general position. Let $s=s_0$ be an \emph{exceptional pole of type $2$ with level $m$} for the pair $(\pi_1,\pi_2)$. Then, $L(s,\pi_1\times\pi_2)$ has a pole at $s=s_0$ of order $n$.
\end{prop}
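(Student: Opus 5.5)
Our plan is to compute $L(s,\pi_1\times\pi_2)$ explicitly through the Langlands parameters, using the description of $\pi_1$ given by Proposition~\ref{principal-cong-inverse}.

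First, apply Proposition~\ref{principal-cong-inverse} to obtain an index $i$ and a multi-index $l=l^{(i)}$ with $|l|=m$ such that
\[
\pi_1\cong \mu_1\times\cdots\times\mu_n,\qquad \mu_j=(\chi_j^{(l)})^{-1}|\cdot|^{-s_0},
\]
where the $\mu_j$ are ordered compatibly with the $\chi_j$. The parameter $\phi_1$ of $\pi_1$ is then $\bigoplus_j\mu_j$ and $\phi_2=\bigoplus_k\chi_k$, regarded as characters of $W_\mathbb{R}$ via $\mathbb{R}^\times\cong W_\mathbb{R}^{ab}$. Consequently
\[
L(s,\pi_1\times\pi_2)=\prod_{j,k}L(s,\mu_j\chi_k).
\]

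Next we look at the $n$ diagonal factors $j=k$. Write $\chi_j=|\cdot|^{t_j}\epsilon_j$. Since $a^{l_j}=|a|^{l_j}\operatorname{sgn}(a)^{l_j}$, we get
\[
\mu_j\chi_j=|\cdot|^{-s_0-l_j}\operatorname{sgn}^{l_j}.
\]
By Tate's formula, $L(s,|\cdot|^{u}\operatorname{sgn}^\delta)=\Gamma_\mathbb{R}(s+u+\delta)$ for $\delta\in\{0,1\}$. So the diagonal factor is $\Gamma_\mathbb{R}(s-s_0-l_j+\bar l_j)$, where $\bar l_j\equiv l_j\pmod 2$. At $s=s_0$ its argument is $-(l_j-\bar l_j)\in -2\mathbb{Z}_{\ge0}$, so each of the $n$ diagonal factors has a simple pole at $s=s_0$.

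Then we treat the off-diagonal factors $j\neq k$. Here
\[
\mu_j\chi_k=|\cdot|^{t_k-t_j-s_0-l_j}\cdot(\text{sign}),
\]
and the corresponding factor has a pole at $s_0$ only if $t_k-t_j-l_j-\delta\in-2\mathbb{Z}_{\ge0}$. In particular this would force $t_k-t_j\in\mathbb{Z}$, which contradicts the general position of $\pi_2$. So these factors are holomorphic and nonvanishing at $s_0$; $\Gamma_\mathbb{R}$ never vanishes, so no cancellation can occur. Multiplying, $L(s,\pi_1\times\pi_2)$ has a pole of order exactly $n$ at $s_0$.

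The main point requiring care is the first step: Proposition~\ref{principal-cong-inverse} is proved via the filtration~\eqref{M-filtration}, and we must make sure the identification of $\pi_1$ is with the specific matching $\mu_j\leftrightarrow\chi_j$ (after any reordering). This is harmless, since the $L$-function depends only on the multiset of characters, so any reordering of the $\mu_j$ gives the same product.
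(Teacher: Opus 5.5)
Your proof is correct and follows the paper's argument. You use Proposition~\ref{principal-cong-inverse} to write $\pi_1$ explicitly and factor $L(s,\pi_1\times\pi_2)=\prod_{j,k}L(s,\mu_j\chi_k)$, each of the $n$ diagonal factors gives a simple pole at $s_0$, and general position of $\pi_2$ rules out poles of the off-diagonal factors. Your explicit remark that $\Gamma_{\mathbb{R}}$ has no zeros, so the order is exactly $n$, makes precise a point the paper leaves implicit; the off-diagonal condition should read $t_k-t_j-l_j+\delta$ rather than $-\delta$, but this does not affect the conclusion.
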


\begin{proof}
    Without loss of generality, we assume that $s_0=0$. Let the inducing characters of $\pi_2$ be $\chi_k(a)=\operatorname{sgn}(a)^{\epsilon_k}|a|^{s_k}$ for $k=1,\dots,n$, where $\epsilon_k \in \{0,1\}$ and $s_k \in \mathbb{C}$. By Proposition \ref{principal-cong-inverse}, there exists a multi-index $l=(l_1,\dots,l_n)$ with $l_j\geq 0;\>\sum_{j=1}^n l_j=m$ such that the inducing characters of $\pi_1$, denoted $\mu_j$, satisfy $\mu_j=(\chi_j \alpha^{l_j})^{-1}$, where $\alpha(a)=\operatorname{sgn}(a)|a|$ is the algebraic character $a\mapsto a$. Hence
\[
\mu_j(a)=\operatorname{sgn}(a)^{-\epsilon_j-l_j}|a|^{-s_j-l_j}
=\operatorname{sgn}(a)^{\epsilon_j+l_j}|a|^{-(s_j+l_j)}.
\]
The Rankin--Selberg $L$-function of $\pi_1$ and $\pi_2$ factorizes as
\[
L(s,\pi_1\times\pi_2)=\prod_{j=1}^n\prod_{k=1}^n L(s,\mu_j\chi_k).
\]
by the local Langlands correspondence \cite{Kna1994}. 
For the diagonal terms $j=k$, we obtain
\[
\mu_j\chi_j(a)=\operatorname{sgn}(a)^{\epsilon_j+l_j}|a|^{-(s_j+l_j)}\cdot \operatorname{sgn}(a)^{\epsilon_j}|a|^{s_j}
=\operatorname{sgn}(a)^{2\epsilon_j+l_j}|a|^{-l_j}
=\operatorname{sgn}(a)^{l_j}|a|^{-l_j}.
\]
For a character $\eta(a)=\operatorname{sgn}(a)^\delta |a|^r$ of $\mathbb{R}^\times$ with $\delta\in\left\{0,1\right\}$, the local $L$-factor $L(s,\eta)$ is
\(
\Gamma_{\mathbb{R}}(s+r+\delta).
\)
Hence
\[
L(s,\mu_j\chi_j)=\Gamma_{\mathbb{R}}(s-l_j+\delta_j),
\qquad \delta_j\equiv l_j \pmod{2},\quad\delta_j\in\left\{0,1\right\}.
\]
Evaluating at $s=0$, the argument becomes $-l_j+\delta_j$, which is a non-positive even integer. Since $\Gamma_{\mathbb{R}}(z)$ has simple poles precisely at $2\mathbb Z_{\le 0}\coloneqq\{0,-2,-4,\dots\}$, each diagonal factor $L(s,\mu_j\chi_j)$ has a simple pole at $s=0$. For $j\neq k$,
\[
\mu_j\chi_k(a)
=\operatorname{sgn}(a)^{\epsilon_j+\epsilon_k+l_j}
|a|^{\,s_k-s_j-l_j},
\]
so
\[
L(s,\mu_j\chi_k)
=\Gamma_{\mathbb R}\!\left(s+s_k-s_j-l_j+\delta_{jk}\right),
\]
with some $\delta_{jk}\in\{0,1\}$. 
A pole at $s=0$ would require
\[
s_k-s_j-l_j+\delta_{jk}\in 2\mathbb Z_{\le 0},
\]
hence $s_k-s_j\in\mathbb Z$. 
But as $\pi_2$ is in general position, $s_k-s_j\notin\mathbb Z$ and therefore the off-diagonal factors are holomorphic at $s=0$.
\end{proof}

\end{subsection}
\end{section}

\begin{section}{Proof of The Main Theorem}\label{s4}

We begin this section by proving that an irreducible generic representation remains indecomposable upon restriction to $P_n$. Although this result is likely known to experts, we were unable to find a reference in the literature, so we include a proof for completeness.

\begin{prop}\label{indecomposable}
Let $\pi\in\Irr_{\mathrm{gen}}(G_n)$. Then $\pi|_{P_n}$ is indecomposable.
\end{prop}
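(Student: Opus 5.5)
Suppose $\pi|_{P_n}=A\oplus B$ is a decomposition into closed $P_n$-stable subspaces. The plan is to show that one of $A$, $B$ must vanish, using the uniqueness of Whittaker functionals together with the Kirillov-type injectivity coming from the Whittaker model.

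First, restrict the Whittaker functional. Let $\lambda$ be a nonzero Whittaker functional on $\pi$ with respect to $(N_n,\psi)$. Since $N_n\subset P_n$, the restrictions $\lambda|_A$ and $\lambda|_B$ are $(N_n,\psi)$-equivariant continuous functionals on $A$ and $B$. The goal is to show that exactly one of them is nonzero. To that end, consider the space $\Hom_{N_n}(\pi|_{P_n},\psi)$. This space is one-dimensional, since $N_n$-equivariance only involves $N_n$, so multiplicity one for Whittaker functionals on $\pi$ applies. On the other hand, the space splits as
\[
\Hom_{N_n}(A,\psi)\oplus\Hom_{N_n}(B,\psi).
\]
Hence, without loss of generality, $\Hom_{N_n}(B,\psi)=0$.

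Next, show that $B=0$. Realize $\pi$ in its Whittaker model $\mathcal{W}(\pi,\psi)$, so that $v\mapsto W_v$ with $W_v(g)=\lambda(\pi(g)v)$. For $v\in B$ and $p\in P_n$, the functional $u\mapsto\lambda(\pi(p)u)$ restricted to $B$ is $(N_n, \psi)$-equivariant after conjugation. More precisely, $u\mapsto \lambda(\pi(p)u)$ on $B$ is equivariant for $p^{-1}N_np$ with the character $\psi(p\cdot p^{-1})$. To handle this cleanly, I would instead use the fact that $\lambda|_B=0$. Since $B$ is $P_n$-stable, this gives $W_v(p)=\lambda(\pi(p)v)=0$ for all $p\in P_n$ and $v\in B$. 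Now invoke the archimedean Kirillov-model injectivity: a Whittaker function $W\in\mathcal{W}(\pi,\psi)$ with $W|_{P_n}\equiv 0$ is identically zero. This is the archimedean analogue of the Gelfand--Kazhdan theorem, due to Jacquet and Kemarsky (“$W|_{P_n}$ determines $W$”). It yields $W_v=0$, hence $v=0$, since $\pi\to\mathcal{W}(\pi,\psi)$ is injective by irreducibility. Therefore $B=0$.

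The main obstacle is justifying the Kirillov injectivity step in the smooth Fréchet archimedean setting. If citing Kemarsky's result is not acceptable, I would try to derive it from the Rankin--Selberg machinery already present. Suppose $W|_{P_n}=0$ and $W\neq 0$. Then for all $W'$, the zeta integrals $\int_{N_n\backslash P_n}W(p)W'(p)|\det p|^{s-1}\,dp$ vanish. By the Iwasawa decomposition used in the proof of Proposition~\ref{3.1}, together with $K_n$-translates, this forces $I(s,gW,W',\Phi)=0$ for all $g$ and all $\Phi$ supported away from $0$. Jacquet's result that the integrals span $L(s,\pi_1\times\pi_2)\cdot(\text{holomorphic})$ with nonvanishing gcd, applied with $\pi_2=\pi^\vee$, would then give a contradiction. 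I expect this fallback to be delicate, so I would first cite the Kirillov-model theorem directly.
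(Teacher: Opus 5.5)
Your argument is correct and uses the same two ingredients as the paper. First, multiplicity one for Whittaker functionals, via the splitting $\Hom_{N_n}(A,\psi)\oplus\Hom_{N_n}(B,\psi)$, forces one summand to carry no Whittaker functional. Second, injectivity of $v\mapsto W_v|_{P_n}$ then shows that summand is zero. The paper uses the ingredients in the opposite order: injectivity shows both projected functionals $\lambda\circ p_i$ are nonzero, which contradicts multiplicity one. Your Rankin--Selberg fallback is not needed.
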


\begin{proof}
Let $\lambda\in\Hom_{N_n}(\pi,\psi)$ be a nonzero Whittaker functional. Since $\pi$ is irreducible generic, 
\[
\dim \Hom_{N_n}(\pi,\psi)=1.
\]
Suppose for contradiction that
\[
\pi|_{P_n}=V_1\oplus V_2
\]
with $V_1,V_2$ non-zero $P_n$-subrepresentations of $\pi$. Let $p_i:\pi\to V_i$ be the $P_n$-equivariant projections and set $\lambda_i=\lambda\circ p_i$ for $i=1,2$. Since $N_n\subset P_n$, we have $\lambda_i\in\Hom_{N_n}(\pi,\psi)$.

We claim $\lambda_1,\lambda_2\neq0$. Suppose $\lambda_1=0$. Then $\lambda$ vanishes on $V_1$, so for any $v\in V_1$ the Whittaker function
\[
W_v(g)=\lambda(\pi(g)v)
\]
vanishes on $P_n$. However, a fundamental result of Jacquet--Shalika \cite{JS1981} says that the map $v\mapsto W_v|_{P_n}$ is injective, hence $v=0$, contradicting $V_1\neq0$. Thus $\lambda_1\neq0$, and similarly $\lambda_2\neq0$.

Finally, $\lambda_1$ vanishes on $V_2$ while $\lambda_2$ vanishes on $V_1$, so they are linearly independent. Hence $\dim\Hom_{N_n}(\pi,\psi)\ge2$, contradicting multiplicity one. Therefore $\pi|_{P_n}$ is indecomposable.
\end{proof}

\begin{remark}
In the non-archimedean setting, the restriction of any irreducible smooth representation to the mirabolic subgroup is known to be indecomposable \cite[Section 6.2]{Zel1980}. One expects the same to hold for all irreducible representations in the archimedean case.
\end{remark}

Next we prove a general lemma on Fréchet representations.
%We claim that it suffices to prove that $\operatorname{Hom}_{P_n}\left(\pi_1\hat{\otimes}\pi_2,\>\left|\operatorname{det}\right|^1\right)=0$. Indeed, if $s=s_0$ were not an exceptional pole of type~1 (with level $m$—the only possibility since $s=s_0$ is a pole of type~2 with level $m$), then the pair would necessarily admit such a bilinear form by Proposition 1.6. We prove the following.

\begin{lemma}
\label{pi-F-filtration}
Let $\pi_1$ and $\pi_2$ be representations of $P_n$. Assume that for each $k \in \{1,2\}$, the representation $\pi_k$ admits a decreasing filtration
\[
\pi_k = F_0\pi_k \supseteq F_1\pi_k \supseteq F_2\pi_k \supseteq \cdots
\]
by closed $P_n$-stable subspaces such that the natural map
\[
\pi_k \xrightarrow{\;\cong\;} \varprojlim_i \pi_k/F_i\pi_k
\]
is an isomorphism in $\mathcal{S}\mathrm{mod}
\,_{P_n}$. If
\[
\operatorname{Hom}_{P_n}(\pi_1, \pi_2) \neq 0,
\]
then there exist integers $i, j \ge 0$ such that
\[
\operatorname{Hom}_{P_n}(F_i\pi_1/F_{i+1}\pi_1, F_j\pi_2/F_{j+1}\pi_2) \neq 0.
\]
\end{lemma}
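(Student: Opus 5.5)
Fix a nonzero $T\in\operatorname{Hom}_{P_n}(\pi_1,\pi_2)$. The idea is to use the filtration on $\pi_2$ to locate a graded piece of the target that $T$ reaches, and then the filtration on $\pi_1$ to locate a graded piece of the source from which $T$ descends nontrivially. Throughout we use that every $F_i\pi_k$ is closed and $P_n$-stable, so all subquotients are again objects of $\mathcal{S}\mathrm{mod}_{P_n}$, and every map obtained by restricting or composing $T$ is continuous and $P_n$-equivariant.

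\emph{Step 1: choose $j$ on the target side.} The isomorphism $\pi_2\cong\varprojlim_j \pi_2/F_j\pi_2$ forces $\bigcap_j F_j\pi_2=0$. Since $T\neq 0$, its image cannot lie in every $F_j\pi_2$. Let $j\ge 0$ be maximal with $T(\pi_1)\subseteq F_j\pi_2$; this is possible because $F_0\pi_2=\pi_2$. Then $T$ is a nonzero map into $F_j\pi_2$, and composing with the projection gives
\[
\overline{T}\colon \pi_1\longrightarrow F_j\pi_2/F_{j+1}\pi_2 ,
\]
which is nonzero by maximality of $j$ and is continuous and $P_n$-equivariant.

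\emph{Step 2: choose $i$ on the source side.} We now need some $i$ with $\overline{T}(F_i\pi_1)\neq 0$ and $\overline{T}(F_{i+1}\pi_1)=0$. Once such an $i$ exists, $\overline{T}$ restricted to $F_i\pi_1$ factors through $F_i\pi_1/F_{i+1}\pi_1$ and gives the required nonzero element of $\operatorname{Hom}_{P_n}(F_i\pi_1/F_{i+1}\pi_1,\,F_j\pi_2/F_{j+1}\pi_2)$. Since $\overline{T}(F_0\pi_1)\neq 0$, such an $i$ exists as soon as $\overline{T}$ kills $F_{i_0}\pi_1$ for some $i_0$: take $i+1$ to be the least index with $\overline{T}(F_{i+1}\pi_1)=0$.

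\emph{Main obstacle.} The difficulty is the \emph{continuity} statement, namely that a continuous map out of $\pi_1=\varprojlim \pi_1/F_i\pi_1$ into a single graded piece $F_j\pi_2/F_{j+1}\pi_2$ must factor through some finite quotient $\pi_1/F_{i_0}\pi_1$.

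If $F_j\pi_2/F_{j+1}\pi_2$ carries a continuous norm, this follows directly. The inverse-limit topology on $\pi_1$ has a basis of zero neighbourhoods consisting of preimages of neighbourhoods in the quotients $\pi_1/F_i\pi_1$, and each such preimage contains $F_i\pi_1$. Pulling back the open unit ball of the norm along $\overline{T}$ therefore gives an $i_0$ and a neighbourhood $U$ of $0$ in $\pi_1/F_{i_0}\pi_1$ whose preimage $U'$ contains $F_{i_0}\pi_1$ and satisfies $\|\overline{T}(U')\|<1$. Because $F_{i_0}\pi_1$ is a linear subspace, $\overline{T}(F_{i_0}\pi_1)$ is a subspace contained in the unit ball, hence zero.

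In general one cannot expect a continuous norm on the graded pieces, so the argument must be refined. The plan is to show that the kernels $\ker\overline{T}\cap F_i\pi_1$ stabilise, using either finite-dimensionality or admissibility of the relevant spaces, or the specific form of the filtrations appearing later, such as the Wu--Zhang Bernstein--Zelevinsky filtration. The first thing to check is whether the intended quotients are of the type considered there, so that the graded pieces admit continuous norms or the above factorisation can be arranged directly. This continuity step is where the argument genuinely uses the inverse-limit hypothesis, and it is the step that needs the most care.
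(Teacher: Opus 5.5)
Your argument is the paper's argument. Your Step 1 is a cleaner form of the paper's choice of $j$: taking $j$ maximal with $T(\pi_1)\subseteq F_j\pi_2$ makes sense of the paper's ``induced map''. Your continuous-norm argument is exactly the second half of the paper's proof. The paper takes an open neighbourhood $U$ of $0$ in $F_j\pi_2/F_{j+1}\pi_2$ ``containing no nonzero linear subspace''. In a locally convex space such a $U$ exists if and only if the space carries a continuous norm: the gauge of a convex balanced open $W\subseteq U$ is a continuous seminorm whose kernel is a subspace lying inside $U$. So the step you isolate as the main obstacle is precisely the step the paper asserts without justification. Neither your proposal nor the paper's proof establishes the statement as written.

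Moreover, the refinement you plan cannot succeed, because the statement is false in this generality. Here is a counterexample.
\begin{itemize}
\item Let $V=\prod_{k\ge 0}\mathbb{C}e_k$, let $N$ be the shift $Ne_k=e_{k+1}$, and let $\begin{pmatrix}a&x\\0&1\end{pmatrix}\in P_2$ act by $\exp(\log|a|\,N)$.
\item Each coordinate of $p\cdot v$ is a polynomial in $\log|a|$ in finitely many coordinates of $v$, bounded by $\max(|a|,|a|^{-1})$ times a continuous seminorm of $v$. Hence $V\in\mathcal{S}\mathrm{mod}_{P_2}$.
\item Take $\pi_1=\pi_2=V$, with $F_i\pi_1=\{v : v_0=\dots=v_{i-1}=0\}$ and the trivial filtration $F_0\pi_2=V$, $F_j\pi_2=0$ for $j\ge 1$. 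Both filtrations satisfy the inverse-limit hypothesis.
\item Every $F_i\pi_1/F_{i+1}\pi_1$ is the trivial character $\mathbf{1}$, and the only nonzero graded piece of $\pi_2$ is $V$. But $\operatorname{Hom}_{P_2}(\mathbf{1},V)=V^{P_2}=\ker N=0$.
\item Yet $\mathrm{id}\in\operatorname{Hom}_{P_2}(\pi_1,\pi_2)$ is nonzero.
\end{itemize}
Here $V$ has no continuous norm and $\mathrm{id}$ kills no $F_i\pi_1$. So no stabilisation of $\ker\overline{T}\cap F_i\pi_1$, whether via admissibility or anything else, can be proved in general.

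The lemma needs an extra hypothesis. Two that work are: every $F_j\pi_2/F_{j+1}\pi_2$ admits a continuous norm (automatic if it is finite-dimensional), or $F_i\pi_1=0$ for $i\gg 0$. With either one, your proof is complete as it stands.

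For the paper's applications, the two kinds of filtration behave differently.
\begin{itemize}
\item The finite filtrations of Proposition~\ref{principal-filtration} need no topology at all.
\item A genuinely infinite inverse limit such as $\varprojlim_{\mathbf{i}} I^{d}(\Pi/\mathcal{F}_{\mathbf{i}})$ never carries a continuous norm, since every continuous seminorm vanishes on the (nonzero) kernel of some projection to a finite stage. So in the repeated applications in Theorem~\ref{main-mirabolic-thm}, the targets must be taken to be the finite-length pieces built from the $I^{d}E(\cdot)$, and the norm condition has to be verified there.
\end{itemize}
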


\begin{proof}
Let $\phi\in\operatorname{Hom}_{P_n}(\pi_1,\pi_2)$ be non-zero. Since $\pi_2 \cong \varprojlim\limits_i \pi_2/F_i\pi_2$, there exists $k \ge 1$ such that the composition 
\[
\pi_1 \xrightarrow{\phi} \pi_2 \to \pi_2/F_k\pi_2
\]
is non-zero. Consequently, there exists a minimal $0\leq j < k$ such that the induced map
\[
\tilde{\phi}:\pi_1 \longrightarrow F_j\pi_2/F_{j+1}\pi_2
\]
is non-zero.

Let $U$ be an open neighborhood of $0$ in $F_j\pi_2/F_{j+1}\pi_2$ containing no nonzero linear subspace. By continuity of $\tilde{\phi}$, the preimage $\tilde{\phi}^{-1}(U)$ is an open neighborhood of $0$ in $\pi_1$. By the inverse limit topology on $\pi_1 \cong \varprojlim\limits_i \pi_1/F_i\pi_1$, there exists $i_0\ge 0$ such that $F_{i_0}\pi_1 \subset \tilde{\phi}^{-1}(U)$. Hence $\tilde{\phi}(F_{i_0}\pi_1)\subset U$, and since $U$ contains no nonzero linear subspace, we must have $\tilde{\phi}(F_{i_0}\pi_1)=0$.

Since $\tilde{\phi}$ is nonzero, choose $i\ge 0$ minimal such that $\tilde{\phi}|_{F_i\pi_1}\neq 0$. Then $\tilde{\phi}$ vanishes on $F_{i+1}\pi_1$, and therefore induces a nonzero map
\[
F_i\pi_1/F_{i+1}\pi_1 \longrightarrow F_j\pi_2/F_{j+1}\pi_2,
\]
as required.
\end{proof}

\begin{remark}
In the lemma above, it suffices to assume that only one of the representations has an inverse limit description. For instance, suppose that $\pi_1$ admits a decreasing filtration by closed $P_n$-stable subspaces such that
\[
\pi_1 \xrightarrow{\;\cong\;} \varprojlim_i \pi_1/F_i\pi_1
\]
in $\mathcal{S}\mathrm{mod}_{P_n}$, while $\pi_2$ is an arbitrary representation of $P_n$. If
\[
\operatorname{Hom}_{P_n}(\pi_1,\pi_2)\neq 0,
\]
then there exists $i \ge 0$ such that
\[
\operatorname{Hom}_{P_n}(F_i\pi_1/F_{i+1}\pi_1,\pi_2)\neq 0.
\]
An analogous statement holds if only $\pi_2$ has an inverse limit description.
\end{remark}

\begin{comment}
    Let $R = \mathbb{C}[[t_1, \dots, t_k]]$ be the ring of formal power series and $\mathfrak{m} = (t_1, \dots, t_k)$ be its maximal ideal. We identify $R$ as the inverse limit of the finite-dimensional quotients:
\[
R = \varprojlim_{n} \mathbb{C}[[t_1, \dots, t_k]]/\mathfrak{m}^n.
\]
Equipped with the subspace topology of the product topology on $\prod_n R/\mathfrak{m}^n$, $R$ is a Fréchet space. We define a $G_k(\mathbb{R})$-action on $R$ by identifying $\mathbb{C}^k$ with the space of linear forms $\text{span}_{\mathbb{C}}\{t_1, \dots, t_k\}$. For $g = (g_{ij}) \in G_k(\mathbb{R})$, the action on the generators is:
\[
g \cdot t_i := \sum_{j=1}^k g_{ji}\, t_j.
\]
This extends to a continuous algebra automorphism of $R$, making $R$ a Fréchet $G_k(\mathbb{R})$-module. For $j \ge 0$, set $F^j = \mathfrak{m}^j$. The filtration $\{F^j\}_{j \ge 0}$ consists of closed, $G_k(\mathbb{R})$-invariant subspaces. The associated graded pieces are finite-dimensional representations, with a natural isomorphism of $G_k(\mathbb{R})$-modules:
\[
F^j/F^{j+1} \cong \text{Sym}^j(\mathbb{C}^k).
\]
\end{comment}

The proof of the main theorem hinges on a detailed analysis of the Bernstein--Zelevinsky filtration of a principal series representation when restricted to $P_n$. Although \cite[Theorem~3.9]{WZ2025} describes this restriction in terms of an infinite filtration, we reinterpret these filtrations as multi-indexed inverse limits, which allows us to work with a finite collection of subquotients.

\begin{prop}
\label{principal-filtration}
Let 
$\pi=\prod\limits_{i=1}^n \chi_i$
be an irreducible principal series representation of $G_n$, where each $\chi_i$ is a character of $G_1$. Then the restriction $\pi|_{P_n}$ admits a finite decreasing filtration
\[
0 \subseteq F_r \subseteq F_{r-1} \subseteq \cdots \subseteq F_1 \subseteq F_0 = \pi|_{P_n}
\]
in $\mathcal S\mathrm{mod}_{P_n}$ for some integer $r\ge0$ such that:

\begin{enumerate}[label=$(\it{\roman*})$]
\item\label{principal-filtration-1} For each integer $j$ with $0 \le j < r$, there exists a non-negative integer $d_j<n-1$ and a representation $\Pi_j$ of $P_{n-d_j}$ equipped with a decreasing multi-indexed filtration
\[
\Pi_j = \mathcal{F}_{\mathbf{0}} \supseteq \mathcal{F}_{\mathbf{i}} \supseteq \cdots,
\qquad \mathbf{i}=(i_1,\dots,i_m)\in\mathbb{Z}_{\ge 0}^m,
\]
such that
\[
F_j/F_{j+1}
\;\cong\;
\varprojlim_{\mathbf{i}\in\mathbb{Z}_{\ge 0}^m}
\, I^{d_j}\!\left(\Pi_j/\mathcal{F}_{\mathbf{i}}\right).
\]

Moreover, the successive graded pieces of this multi-filtration satisfy
\[
\mathcal{F}_{\mathbf{i}} \Big/ 
\sum_{t=1}^m \mathcal{F}_{i_1,\dots,i_t+1,\dots,i_m}
\;\cong\;
E\!\left(
\prod_{t=1}^m
\left(|\det|^{\frac12}\sigma_t \otimes \Sym^{i_t}(\mathbb{C}^{k_t})\right)
\right),
\]
where each $\sigma_t$ is a principal series representation of $G_{k_t}$
whose inducing characters form a subset of $\{\chi_1,\dots,\chi_n\}$. The inducing characters occurring in the various $\sigma_t$
are pairwise disjoint, and each such representation occurs exactly once among the successive graded pieces.

\item\label{principal-filtration-2} The terminal subrepresentation is irreducible; more precisely,
\[
F_r \;\cong\; I^{n-1}E(\mathbf{1})
\;\cong\;
\mathcal{S}\!\operatorname{Ind}_{N_n}^{P_n}(\psi),
\]
the Gelfand--Graev representation of $P_n$.
\end{enumerate}
\end{prop}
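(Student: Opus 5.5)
We prove the statement by induction on $n$, using the Bernstein--Zelevinsky type filtration of Wu--Zhang \cite[Theorem~3.9]{WZ2025} for the restriction of a parabolically induced representation to the mirabolic subgroup. Write $\pi=\chi_1\times\tau$ with $\tau=\chi_2\times\cdots\times\chi_n$ a principal series of $G_{n-1}$, irreducible because $\pi$ is in general position. We analyse $\pi|_{P_n}$ through the $P_n$-orbits on $P_{1,n-1}\backslash G_n\cong\mathbb{P}^{n-1}(\mathbb{R})$. There are two orbits: a closed one and an open one, the open orbit being the complement of the hyperplane determined by $e_n$.

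The open orbit contributes the Schwartz-induced piece. By the Mackey/Borel-type analysis of \cite{WZ2025} it is isomorphic to $\chi_1|\cdot|^{?}\times(\tau|_{P_{n-1}})$, a mirabolic induction; the twist by $|\cdot|^{1/2}$ comes from normalisation. Using Proposition~\ref{2.1}, applying the induction hypothesis to $\tau|_{P_{n-1}}$ and inducing term by term yields pieces of the form $I^{d}(\cdots)$ and $E(\cdots)$.

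The closed orbit contributes an infinite filtration by transversal derivatives. Borel's lemma on jets normal to the closed orbit expresses this contribution as an inverse limit $\varprojlim_i$ of pieces twisted by $\Sym^i$ of the conormal bundle. Concretely, the graded piece is the trivial extension $E\bigl(|\det|^{1/2}\chi_1\otimes\Sym^i(\mathbb{C}^{1})\bigr)\times(\cdots)$. This is the origin of the $\Sym^{i_t}(\mathbb{C}^{k_t})$ factors.

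Iterating, each closed-orbit step adds one index $i_t$ and one principal series block $\sigma_t$, giving the multi-indexed filtration $\mathcal F_{\mathbf i}$. Each open-orbit step applies one more Mackey induction $I$, which raises $d_j$. The inducing characters are consumed one at a time, so the blocks $\sigma_t$ use disjoint subsets of $\{\chi_i\}$. Each pattern of choices (closed versus open at each stage, plus the grouping of consecutive closed steps into a block $G_{k_t}$) appears exactly once. The pattern in which the open orbit is chosen at every stage gives $I^{n-1}E(\mathbf 1)$. This is the Gelfand--Graev representation $\mathcal S\mathrm{Ind}_{N_n}^{P_n}(\psi)$, irreducible by \cite[Theorem~2.36]{WZ2025}; it sits at the bottom as the subrepresentation $F_r$, since Schwartz-induced open-orbit pieces are subrepresentations. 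Ordering the remaining patterns compatibly with the orbit closure relations gives the finite filtration $F_j$, with $d_j<n-1$ for $j<r$.

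The main obstacle is commuting inverse limits with the functors $I$ and mirabolic induction. We must show that
\[
\varprojlim_{\mathbf i}I^{d_j}(\Pi_j/\mathcal F_{\mathbf i})\cong I^{d_j}(\Pi_j)
\]
in the appropriate sense, and that the iterated one-index limits assemble into a single multi-indexed limit. We would deduce this from exactness of Schwartz induction \cite{CS2021} on nuclear Fréchet spaces, together with the Mittag-Leffler property (surjective transition maps). Our plan is to verify the two-index case explicitly and then induct.

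A second delicate point is to check that, in the jet description, the $\Sym^{i}$ twists on consecutive closed-orbit steps combine into $\Sym^{i_t}(\mathbb C^{k_t})$ for a block $\sigma_t$. We would handle this by grouping consecutive closed steps via transitivity of induction, applied to the stabilizer of a $k_t$-dimensional subspace.
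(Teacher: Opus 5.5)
There is a genuine gap. You have mislabeled what the two $P_n$-orbits contribute, and as a result your argument has no source for the Mackey induction $I$. The stabilizer in $P_n$ of the closed-orbit point is $P_n\cap P_{1,n-1}$, so it is the closed orbit (the quotient $\pi^c$) that yields the mirabolic induction. Its normal jets give graded pieces $\bigl(|\cdot|^{1/2}\chi_1\otimes\Sym^i(\mathbb{C})\bigr)\times\tau|_{P_{n-1}}$. An $E$ appears there only after recursing on $\tau|_{P_{n-1}}$ and pulling it out with Proposition~\ref{2.1}(i); as you wrote it, $E(\cdots)\times(\cdots)$ is ill-typed, since mirabolic induction takes a $G_k$-representation on the left. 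The stabilizer of the open-orbit point is $G_{n-1}$, and $\chi_1$ drops out: $\pi^o\cong\mathcal{S}\mathrm{Ind}_{G_{n-1}}^{P_n}(\tau)$, with $\tau$ the full $G_{n-1}$-representation rather than its restriction to $P_{n-1}$. Your assertion that each open-orbit step applies one more $I$ also contradicts your own identification of the open orbit as a mirabolic induction. By Proposition~\ref{2.1}(ii), mirabolic induction commutes with $I$ but never creates one. So in your scheme no $I$, no $d_j>0$, and no Gelfand--Graev piece $I^{n-1}E(\mathbf{1})$ ever arises.

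The missing idea is a second decomposition of $\pi^o$ by Fourier analysis along $V_n\cong\mathbb{R}^{n-1}$, on whose characters $G_{n-1}$ has two orbits. The nonzero orbit gives a subrepresentation $I(\tau|_{P_{n-1}})\subset\pi^o$. This is where $I$ enters, and iterating it produces the bottom piece $\mathcal{S}\mathrm{Ind}_{N_n}^{P_n}(\psi)$. The zero character gives the quotient $\pi^{cc}=\pi^o/I(\tau|_{P_{n-1}})$, an inverse limit with graded pieces $E\bigl(|\det|^{1/2}\tau\otimes\Sym^i(\mathbb{C}^{n-1})\bigr)$. These are jets at a point whose normal space is $(n-1)$-dimensional, carrying its natural $G_{n-1}$-action. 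This is the only source of blocks $\Sym^{i_t}(\mathbb{C}^{k_t})$ with $k_t>1$. Your proposed grouping of consecutive closed-orbit steps via transitivity of induction cannot produce them. Consecutive closed steps give separate factors $|\cdot|^{1/2}\chi_j\otimes\Sym^{i_j}(\mathbb{C})$ with independent indices, and these are exactly the $k_t=1$ blocks. Once you have the three pieces $I(\tau|_{P_{n-1}})$, $\pi^{cc}$ and $\pi^c$, the induction on $n$ goes through: apply the hypothesis to $\tau|_{P_{n-1}}$ and pull $I$ and $E$ outside with Proposition~\ref{2.1}. The commutation of $I$ with the inverse limits can be handled as you suggest, which is essentially what the paper does.
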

%This is the BZ-filtration of $\pi|_{P_n}$ as defined in \cite{WZ2025}.

\begin{proof}
We argue by induction on $n$. The statement is clear for $n=1$.
Assume it holds for $n-1$.
Write
\[
\pi \;=\; \prod_{i=1}^n \chi_i
\;\cong\;
\Ind_{P_{1,n-1}}^{G_n}
\bigl(\chi_1 \boxtimes \pi_2\bigr),
\qquad
\pi_2 := \prod_{i=2}^n \chi_i,
\]
where $\pi_2$ is viewed as a principal series representation of $G_{n-1}$.
We have the double coset decomposition
\[
P_{1,n-1}\backslash G_n / P_n
\;\cong\;
\left\{ I_n,\; 
w =
\begin{pmatrix}
0 & I_{n-1} \\
1 & 0
\end{pmatrix}
\right\}.
\]
Accordingly, the restriction $\pi|_{P_n}$ fits into a short exact sequence
\begin{equation}\label{eq:mirabolic-sequence}
0 \;\longrightarrow\; \pi^{o}
\;\longrightarrow\; \pi|_{P_n}
\;\longrightarrow\; \pi^{c}
\;\longrightarrow\; 0
\end{equation}
where $\pi^{o}$ (resp.\ $\pi^{c}$) corresponds to the open orbit of $w$
(resp.\ the closed orbit of $I_n$).

The closed-orbit quotient $\pi^{c}$ admits an infinite decreasing filtration
\[
\pi^{c} = F_0 \supseteq F_1 \supseteq F_2 \supseteq \cdots
\]
such that
\[
\pi^{c} \;\cong\; \varprojlim_{i \ge 0} F_0/F_i,
\]
and the successive quotients are given by
\[
F_i / F_{i+1}
\;\cong\;
\bigl(|\cdot|^{1/2}\,\chi_1 \otimes \Sym^i(\mathbb{C})\bigr)
\times
\pi_2|_{P_{n-1}},
\qquad i \ge 0.
\]

The open-orbit subrepresentation admits the description
\[
\pi^{o}
\;\cong\;
\pi_2 \,\overline{\times}\, \chi_1|_{P_1}
\;\cong\;
\mathcal{S}\!\operatorname{Ind}_{G_{n-1}}^{P_n}(\pi_2),
\]
and fits into a short exact sequence
\[
0 \;\longrightarrow\;
I\bigl(\pi_2|_{P_{n-1}}\bigr)
\;\longrightarrow\;
\pi^{o}
\;\longrightarrow\;
\pi^{cc}
\;\longrightarrow\;
0.
\]
The representation $\pi^{cc}$ admits an infinite decreasing filtration
\[
\pi^{cc} = F_0 \supseteq F_1 \supseteq F_2 \supseteq \cdots
\]
such that
\[
\pi^{cc} \;\cong\; \varprojlim_{i \ge 0} F_0/F_i,
\]
with successive quotients
\[
F_i / F_{i+1}
\;\cong\;
E\left(|\det|^{1/2}\,\pi_2 \otimes \Sym^i(\mathbb{C}^{n-1})\right),
\qquad i \ge 0.
\]

By the induction hypothesis, $\pi_2|_{P_{n-1}}$ admits a finite
filtration of the required form. By Proposition \ref{2.1}, the functors $I$ and $E$
may be pulled out of the mirabolic induction appearing in the filtration.
Moreover, since $\mathcal{S}(P_n)$ is a nuclear Fréchet space and the transition maps in the projective systems above
are continuous surjections, $I$ commutes with these inverse limits.
Combining these facts with the above short exact sequences, we obtain the desired
filtration of $\pi|_{P_n}$.
\end{proof}

We illustrate the above filtration for principal series representations of $G_2$ and $G_3$ to make things clearer for the reader. Subrepresentations are denoted by injective arrows, with the corresponding subquotients indicated below each arrow.

\begin{Example}\label{example1}
Let $\pi=\chi_1\times\chi_2$ be an irreducible principal series representation of $G_2$. 
Then the restriction $\pi|_{P_2}$ admits a finite filtration
\[
\begin{tikzcd}
0 
\arrow[r, hook] 
& \mathcal{S}\Ind_{N_2}^{P_2}(\psi) 
\arrow[r, hook, "\pi^c"'] 
& \mathcal{S}\Ind_{G_1}^{P_2}(\chi_2) 
\arrow[r, hook, "\pi^{cc}"'] 
& \pi|_{P_2}.
\end{tikzcd}
\]

The subquotient $\pi^c$ admits an infinite decreasing filtration
\[
\pi^c = F_0 \supseteq F_1 \supseteq F_2 \supseteq \cdots,
\qquad
\pi^c \cong \varprojlim_{i\ge0} F_0/F_i,
\]
with
\[
F_i/F_{i+1}
\cong
E\!\left(|\cdot|^{1/2}\chi_2 \otimes \Sym^i(\C)\right),
\quad i\ge0.
\]

Similarly, $\pi^{cc}$ admits an infinite decreasing filtration
\[
\pi^{cc} = F'_0 \supseteq F'_1 \supseteq F'_2 \supseteq \cdots,
\qquad
\pi^{cc} \cong \varprojlim_{i\ge0} F'_0/F'_i,
\]
with
\[
F'_i/F'_{i+1}
\cong
E\!\left(|\cdot|^{1/2}\chi_1 \otimes \Sym^i(\C)\right),
\quad i\ge0.
\]
\end{Example}

\begin{Example}\label{example2}
Let $\pi=\chi_1\times\chi_2\times\chi_3$ be an irreducible principal series of $G_3$, and set $\pi_2=\chi_2\times\chi_3$.

Then $\pi|_{P_3}$ admits a finite filtration
\[
\begin{tikzcd}
0 
\arrow[r, hook] 
& I(\pi_2|_{P_2}) 
\arrow[r, hook, "\pi^c"'] 
& \mathcal{S}\Ind_{G_2}^{P_3}(\pi_2) 
\arrow[r, hook, "\pi^{cc}"'] 
& \pi|_{P_3}.
\end{tikzcd}
\]

The subquotient $\pi^c$ admits an infinite decreasing filtration
\[
\pi^c=F_0\supseteq F_1\supseteq F_2\supseteq \cdots,
\qquad
\pi^c\cong\varprojlim_{i\ge0}F_0/F_i,
\]
with
\[
F_i/F_{i+1}
\cong
E\!\left(|\det|^{1/2}\pi_2\otimes\Sym^i(\C^2)\right).
\]

Similarly,
\[
\pi^{cc}=F'_0\supseteq F'_1\supseteq \cdots,
\qquad
\pi^{cc}\cong\varprojlim_{i\ge0}F'_0/F'_i,
\]
and
\[
F'_i/F'_{i+1}
\cong
\left(|\cdot|^{1/2}\chi_1\otimes\Sym^i(\C)\right)\times \pi_2|_{P_2}.
\]

Using Example \ref{example1}, $I(\pi_2|_{P_2})$ admits the filtration
\[
\begin{tikzcd}
0 
\arrow[r, hook] 
& \mathcal{S}\Ind_{N_3}^{P_3}(\psi) 
\arrow[r, hook, "\pi_c"'] 
& I\!\left(\mathcal{S}\Ind_{G_1}^{P_2}(\chi_3)\right) 
\arrow[r, hook, "\pi_{cc}"'] 
& I(\pi_2|_{P_2}),
\end{tikzcd}
\]
where $\pi_c$ and $\pi_{cc}$ have infinite filtrations with graded pieces
\[
F_i/F_{i+1}
\cong
IE(|\cdot|^{1/2}\chi_3\otimes\Sym^i(\C)),
\]
and
\[
F_i/F_{i+1}
\cong
IE(|\cdot|^{1/2}\chi_2\otimes\Sym^i(\C)),
\]
respectively.

Likewise, $\pi^{cc}$ admits
\[
\begin{tikzcd}
0 
\arrow[r, hook] 
& \pi_o 
\arrow[r, hook, "\pi'_c"'] 
& \pi_{oo}
\arrow[r, hook, "\pi'_{cc}"'] 
& \pi^{cc},
\end{tikzcd}
\]
where $\pi_o$ and $\pi_{oo}$ carry infinite filtrations with graded pieces
\[
(|\cdot|^{1/2}\chi_1\otimes\Sym^i(\C))
\times
\mathcal{S}\Ind_{N_2}^{P_2}(\psi)\cong IE(|\cdot|^{1/2}\chi_1\otimes\Sym^i(\C)),
\]
and
\[
(|\cdot|^{1/2}\chi_1\otimes\Sym^i(\C))
\times
\mathcal{S}\Ind_{G_1}^{P_2}(\chi_3),
\]
respectively.

Finally, $\pi'_c$ and $\pi'_{cc}$ admit bi-indexed filtrations
\[
\pi'_c \cong \varprojlim_{i,j\ge0} F_{0,0}/F_{i,j},
\qquad
\pi'_{cc} \cong \varprojlim_{i,j\ge0} G_{0,0}/G_{i,j},
\]
with
\[
F_{i,j}/(F_{i+1,j}+F_{i,j+1})
\cong
E\!\left(
(|\cdot|^{1/2}\chi_1\otimes\Sym^i(\C))
\times
(|\cdot|^{1/2}\chi_3\otimes\Sym^j(\C))
\right),
\]
\[
G_{i,j}/(G_{i+1,j}+G_{i,j+1})
\cong
E\!\left(
(|\cdot|^{1/2}\chi_1\otimes\Sym^i(\C))
\times
(|\cdot|^{1/2}\chi_2\otimes\Sym^j(\C))
\right).
\]
\end{Example}

\begin{thm}
\label{main-mirabolic-thm}
    Let $n\geq 2$ and $\pi_1$ and $\pi_2$ be two irreducible principal series representations of $G_n$ in general position. Let $s=s_0$ be an \emph{exceptional pole of type $2$ with level $m$} for the pair $(\pi_1,\pi_2)$. Then, $$\operatorname{Hom}_{P_n}\left(\pi_1\hat{\otimes}\pi_2,\>\left|\operatorname{det}\right|^{1-s_0}\right)=0.$$
\end{thm}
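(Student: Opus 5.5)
The plan is to show that a nonzero element of $\operatorname{Hom}_{P_n}\bigl(\pi_1\hat{\otimes}\pi_2,|\det|^{1-s_0}\bigr)$ forces a nonzero invariant form between two specific graded pieces of the Bernstein--Zelevinsky filtrations of $\pi_1|_{P_n}$ and $\pi_2|_{P_n}$, and that the arithmetic of Proposition~\ref{principal-cong-inverse} excludes every such pair. After twisting we may assume $s_0=0$. By Proposition~\ref{principal-cong-inverse} we may write $\pi_2=\chi_1\times\cdots\times\chi_n$ and $\pi_1=\mu_1\times\cdots\times\mu_n$ with $\mu_j=(\chi_j\alpha^{l_j})^{-1}$, where $\alpha(a)=a$ and $|l|=m$. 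A form $\lambda\in\operatorname{Hom}_{P_n}(\pi_1\hat{\otimes}\pi_2,|\det|)$ is the same as a continuous $P_n$-map $\pi_1\to(\pi_2\otimes|\det|^{-1})^{*}$.

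\textbf{Reduction to graded pieces.} I apply Proposition~\ref{principal-filtration} to both $\pi_1|_{P_n}$ and $\pi_2|_{P_n}$. The first filtration is finite with pieces $F_j/F_{j+1}\cong\varprojlim I^{d_j}(\Pi_j/\mathcal F_{\mathbf i})$ together with the Gelfand--Graev bottom piece. Using the Remark after Lemma~\ref{pi-F-filtration}, the finiteness of these filtrations, and the exactness of $I$, I choose the minimal step of the $\pi_1$-filtration, and then of the $\pi_2$-filtration, on which $\lambda$ does not vanish. This produces a nonzero $P_n$-invariant form
\[
I^{d}E(\tau)\hat{\otimes} I^{d'}E(\tau')\longrightarrow|\det|,
\qquad
\tau=\prod_t\bigl(|\det|^{1/2}\sigma_t\otimes\Sym^{i_t}\bigr),
\]
with $\tau'$ of the same shape built from the characters of $\pi_2$. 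There is one exception: the bottom pieces $\mathcal{S}\operatorname{Ind}_{N_n}^{P_n}(\psi)$, which are not of this form and are treated separately below.

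\textbf{Mixed and matched depths.} I peel off the functors $I$ and $E$ one at a time, using Frobenius reciprocity for Schwartz induction. If one side is $I(\cdot)$ and the other is $E(\cdot)$, then $\mathfrak v_n$ acts on the tensor product through a nontrivial character, whereas it acts trivially on $|\det|$, so the Hom-space vanishes. Similarly $I(\sigma)\hat{\otimes}I(\sigma')$ reduces to a $P_{n-1}$-form on $\sigma\hat{\otimes}\sigma'$. This shows $d=d'$ (Proposition~\ref{depth}) and that the pieces of matching depth reduce to
\[
\operatorname{Hom}_{G_{n-d}}\bigl(\tau\hat{\otimes}\tau',|\det|^{c}\bigr)
\]
for an explicit half-integral shift $c$. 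Both $\tau$ and $\tau'$ have finite filtrations by principal series. Comparing central and infinitesimal characters, and using general position, a nonzero form forces the inducing characters to pair up as $\mu_j\chi_j\alpha^{p}|\cdot|^{e}=1$ for some $p\ge0$ and a fixed $e$ determined by the normalizations. If the half-shifts combine to give $e=1$, this reads $\operatorname{sgn}^{p-l_j}|\cdot|^{p-l_j+1}=1$. That requires $p=l_j-1$, but then $\operatorname{sgn}^{-1}\neq1$, a parity contradiction. I expect the bookkeeping of these shifts to be routine but fiddly, and it must be checked for each $d<n$.

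\textbf{Main obstacle: the Gelfand--Graev pieces.} The invariant form between the two bottom pieces $\mathcal{S}\operatorname{Ind}_{N_n}^{P_n}(\psi)$ genuinely exists: it is $\int_{N_n\backslash P_n}W W'|\det|^{s-1}$ at $s=0$. So here I must show that such a form does not extend to all of $\pi_1\hat{\otimes}\pi_2$. My first approach is as follows. By Frobenius reciprocity, any $\lambda$ that survives to this step is determined by Whittaker functionals, hence is proportional to the regularized value
\[
\lim_{s\to0}\frac{\int_{N_n\backslash P_n}WW'|\det|^{s-1}\,dp}{L(s,\pi_1\times\pi_2)}
\]
from Proposition~\ref{3.1}, or to a lower Laurent coefficient. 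Proposition~\ref{pole} says $L(s,\pi_1\times\pi_2)$ has a pole of order $n$ at $0$. I would then use the Iwasawa decomposition \eqref{decomposition} to argue that this pole is entirely accounted for by the exceptional pole, i.e.\ by the $\mathcal{S}^{m}$-level term in the $\Phi$-integral. It would then follow that the mirabolic integral has strictly lower pole order, so the normalized form vanishes identically, contradicting $\lambda\neq0$. If this direct route fails, the fallback is to compute the extension class of the bottom piece inside the depth-$(n-1)$ piece directly in the case $n=2$ (Example~\ref{example1}), and then induct using Proposition~\ref{2.1}.
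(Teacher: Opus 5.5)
\textbf{There is a genuine gap, and it cannot be repaired.} Working with forms on $\pi_1\hat{\otimes}\pi_2$ is the right setup, but the step that is supposed to produce the contradiction fails. In every graded piece $E\bigl(\prod_t(|\det|^{1/2}\sigma_t\otimes\Sym^{i_t})\bigr)$ the factor $|\det|^{1/2}$ occurs on both the $\pi_1$-side and the $\pi_2$-side. Peeling off $I$ (normalized Schwartz induction) introduces no further twist by $\det$. So the two half-twists multiply to $|\det|$ and cancel the target character $|\det|^{1-s_0}=|\det|$. Thus $e=0$, not $e=1$, and your matching condition becomes $\mu_j\chi_j\alpha^{p}=\alpha^{p-l_j}=1$. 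This holds for $p=r+r'=l_j$, so nonzero invariant forms exist on suitable pairs of graded pieces and there is no parity contradiction.

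The Gelfand--Graev step fails for a related reason. Suppose $\int_{N_n\backslash P_n}WW'|\det|^{s-1}\,dp$ has pole order $d'<n$ at $s_0$. Dividing by $L(s,\pi_1\times\pi_2)$ kills it, but the coefficient of $(s-s_0)^{-d'}$ is itself a nonzero element of $\Hom_{P_n}(\pi_1\hat{\otimes}\pi_2,|\det|^{1-s_0})$. So a lower pole order contradicts nothing. You also have not shown that every such $\lambda$ comes from this integral.

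In fact the statement itself fails, already for $m=0$. Let $\pi_2=\chi_1\times\cdots\times\chi_n$ be in general position, $\pi_1=\pi_2^{\vee}$ and $s_0=0$. The canonical pairing makes $s=0$ an exceptional pole of type~2 with level~$0$. Realize $\pi_2=\operatorname{Ind}_{P_{n-1,1}}^{G_n}(\tau\otimes\chi_n)$ with $\tau=\chi_1\times\cdots\times\chi_{n-1}$, and $\pi_1=\operatorname{Ind}_{P_{n-1,1}}^{G_n}(\tau^{\vee}\otimes\chi_n^{-1})$. We have $P_n\subset P_{n-1,1}$, and $\delta_{P_{n-1,1}}^{1/2}$ equals $|\det g|^{1/2}$ on $p=\begin{pmatrix} g&v\\ 0&1\end{pmatrix}$. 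Hence $f(p)=|\det g|^{1/2}\tau(g)f(1)$, and
\[
\Lambda(f_1\otimes f_2)=\langle f_1(1),f_2(1)\rangle
\]
is a nonzero continuous form with $\Lambda(pf_1\otimes pf_2)=|\det p|\,\Lambda(f_1\otimes f_2)$. That is, $\Lambda$ is a nonzero element of $\Hom_{P_n}(\pi_1\hat{\otimes}\pi_2,|\det|)$. This is exactly the kind of form your graded-piece analysis detects once $e=0$.

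The paper reaches the opposite conclusion only through the identification
\[
\Hom_{P_n}(\pi_1\hat{\otimes}\pi_2,|\det|)\cong\Hom_{P_n}(\pi_1,\pi_2^{\vee}|\det|).
\]
In the filtration of $\pi_2^{\vee}|\det|$ the extra $|\det|$ sits next to a $|\det|^{1/2}$ with the same sign, and that is where its parity obstruction comes from. For the non-reductive group $P_n$, however, this is only an inclusion. The $P_n$-smooth dual of $\pi_2|_{P_n}$ is strictly larger than $\pi_2^{\vee}$: the form $\Lambda$ corresponds to a delta-type functional supported at the identity coset, not to a vector of $\pi_2^{\vee}$.

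What Theorem~\ref{1.1} actually needs is weaker: that the normalized form of Proposition~\ref{3.1} vanishes at $s_0$, i.e.\ that the mirabolic integral has pole order $<n$ there. A pole-order comparison like the one you sketch would have to aim at that statement, not at vanishing of the whole $\Hom$-space.
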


\begin{proof}
    By the definition of exceptional pole of type $2$, it is not difficult to see that we can assume $s_0=0$ without loss of generality. Set
\[
\sigma_1 := \pi_1,
\qquad
\sigma_2 := \pi_2^{\vee}\,|\det|^1.
\]
Then there is a natural identification
\[
\Hom_{P_n}
\bigl(\pi_1 \hat\otimes \pi_2,\; |\det|^1\bigr)
\;\cong\;
\Hom_{P_n}\bigl(\sigma_1,\; \sigma_2\bigr).
\]
Note that $\sigma_1$ and $\sigma_2$ are again irreducible principal series
representations of $G_n$ in general position. By Proposition \ref{principal-cong-inverse}, if $\sigma_2=\prod\limits_{i=1}^{n}\chi_i$, then there exists $1\leq i\leq N$ such that 
\[
\sigma_1
\cong
\Ind_{B_n}^{G_n}\Bigl(
\chi_1^{(l^{(i)})}|\cdot|^{-1},
\dots,
\chi_n^{(l^{(i)})}|\cdot|^{-1}
\Bigr).
\]  
We consider the finite filtrations of $\sigma_1$ and $\sigma_2$ upon restriction to $P_n$, as in Proposition \ref{principal-filtration}. Any nonzero $P_n$-invariant homomorphism between $\sigma_1$ and $\sigma_2$ necessarily induces a nonzero morphism between at least one pair of their corresponding subquotients. Assume first that both subquotients are of the form \ref{principal-filtration-1} in Proposition \ref{principal-filtration}. Then there exist non-negative integers $d_1<n-1,d_2<n-1$ and representations $\Pi_1$ and $\Pi_2$ of $P_{n-d_1}$ and $P_{n-d_2}$, respectively, each equipped with a decreasing multi-indexed filtration ($l\in\left\{1,2\right\}$)
\[
\Pi_\ell=\mathcal{F}^{(\ell)}_{\mathbf{0}}
\supseteq \mathcal{F}^{(\ell)}_{\mathbf{i}}
\supseteq \cdots,
\qquad
\mathbf{i}\in\mathbb Z_{\ge 0}^{m_\ell},
\]
such that we obtain a nonzero $P_n$-equivariant homomorphism
\[
\varprojlim_{\mathbf{i}\in\mathbb Z_{\ge 0}^{m_1}}
\, I^{d_1}\!\bigl(\Pi_1/\mathcal{F}^{(1)}_{\mathbf{i}}\bigr)
\;\longrightarrow\;
\varprojlim_{\mathbf{j}\in\mathbb Z_{\ge 0}^{m_2}}
\, I^{d_2}\!\bigl(\Pi_2/\mathcal{F}^{(2)}_{\mathbf{j}}\bigr).
\]
By using Lemma \ref{pi-F-filtration} repeatedly as many times as required and using the finite filtration of principal series tensor symmetric power as in Section $\ref{s3}$, there exist subsets of indices 
\[
\{j_1,\dots,j_{n-d_1-1}\} \subseteq \{1,\dots,n\}, 
\qquad
\{k_1,\dots,k_{n-d_2-1}\} \subseteq \{1,\dots,n\},
\]
together with non-negative integers $\{r_t\}_{t=1}^{n-d_1-1}$ and $\{r'_t\}_{t=1}^{n-d_2-1}$, such that there exists a non-zero $P_n$-equivariant homomorphism 
\[
I^{d_1}E\!\left(
\prod_{t=1}^{n-d_1-1}
\left(|\cdot|^{1/2}\chi_{j_t}^{\,l^{(i)}} \otimes \operatorname{Sym}^{r_t}(\mathbb{C})\right)
\,|\cdot|^{-1}
\right)
\;\longrightarrow\;
I^{d_2}E\!\left(
\prod_{t=1}^{n-d_2-1}
\left(|\cdot|^{1/2}\chi_{k_t} \otimes \operatorname{Sym}^{r'_t}(\mathbb{C})\right)
\right).
\]
Next, observe that both sides are irreducible representations (since the functors $I$ and $E$ preserve irreducibility), and hence the above morphism must be an isomorphism. By Proposition~\ref{depth}, it follows that $d_1 = d_2$. Set $d := d_1 = d_2$. Thus there exists an isomorphism
\[
I^{d}E\!\left(
\prod_{t=1}^{n-d-1}
\left(|\cdot|^{1/2}\chi_{j_t}^{\,l^{(i)}} \otimes \operatorname{Sym}^{r_t}(\mathbb{C})\right)
\,|\cdot|^{-1}
\right)
\;\longrightarrow\;
I^{d}E\!\left(
\prod_{t=1}^{n-d-1}
\left(|\cdot|^{1/2}\chi_{k_t} \otimes \operatorname{Sym}^{r'_t}(\mathbb{C})\right)
\right).
\]
By \cite[Corollary~4.4]{WZ2025}, the functor $I^{d}E$ admits a left inverse. This implies that we obtain an isomorphism of $G_{n-d-1}$-representations
\begin{equation}
\label{Gm-equi-sym-morphism}
\left(
\prod_{t=1}^{n-d-1}
\left(|\cdot|^{1/2}\chi_{j_t}^{\,l^{(i)}} \otimes \operatorname{Sym}^{r_t}(\mathbb{C})\right)
\,|\cdot|^{-1}
\right)
\;\longrightarrow\;
\left(
\prod_{t=1}^{n-d-1}
\left(|\cdot|^{1/2}\chi_{k_t} \otimes \operatorname{Sym}^{r'_t}(\mathbb{C})\right)
\right).
\end{equation}
Such an isomorphism forces the inducing data to agree up to permutation. Thus there exists  $\beta$ in the permutation group $S_{n-d-1}$ such that for all $1 \le t \le n-d-1$,
\[
|\cdot|^{-1}\chi_{j_t}^{\,l^{(i)}} \otimes \Sym^{r_t}(\C)
=
\chi_{k_{\beta(t)}} \otimes \Sym^{r'_{\beta(t)}}(\C).
\]
If $\beta(t)$ is such that $j_t \neq k_{\beta(t)}$, then
$\chi_{j_t}\chi_{k_{\beta(t)}}^{-1}$ is of the form $\operatorname{sgn}(\cdot)^{\epsilon}|\cdot|^{s}$ with $s \in \mathbb{Z}$, which contradicts the general position assumption. On the other hand, if $j_t = k_{\beta(t)}$, then the equality
\[
|\cdot|^{-1}\chi_{j_t}^{\,l^{(i)}} \otimes \Sym^{r_t}(\C)
=
\chi_{j_t} \otimes \Sym^{r'_{\beta(t)}}(\C)
\]
forces $|\cdot|^{-1}$ to be algebraic, which is impossible. Therefore, no such isomorphism exists.

Next, assume that the first subquotient is of the form~\ref{principal-filtration-2}, 
while the second is of the form~\ref{principal-filtration-1}. 
Then there exists a nonzero $P_n$-equivariant homomorphism
\[
\mathcal{S}\!\operatorname{Ind}_{N_n}^{P_n}(\psi)
\;\longrightarrow\;
\varprojlim_{\mathbf{j}\in\mathbb Z_{\ge 0}^{m_2}}
\, I^{d_2}\!\bigl(\Pi_2/\mathcal{F}^{(2)}_{\mathbf{j}}\bigr).
\]
Again, by repeatedly applying Proposition~\ref{principal-filtration}, this yields a nonzero $P_n$-equivariant homomorphism
\begin{equation}
\label{GG-source-map}
\mathcal{S}\!\operatorname{Ind}_{N_n}^{P_n}(\psi)
\;\longrightarrow\;
I^{d_2}E\!\left(
\prod_{t=1}^{n-d_2-1}
\left(|\cdot|^{1/2}\chi_{k_t} \otimes \operatorname{Sym}^{r'_t}(\mathbb{C})\right)
\right),
\end{equation}
for some subset $\{k_1,\dots,k_{n-d_2-1}\} \subseteq \{1,\dots,n\}$ and non-negative integers $\{r'_t\}$.

Since both representations are irreducible, this morphism must be an isomorphism, which is impossible by Proposition~\ref{depth}. Similarly, we cannot have a nonzero morphism from a subquotient of type~\ref{principal-filtration-1} in Proposition~\ref{principal-filtration} to $\mathcal{S}\!\operatorname{Ind}_{N_n}^{P_n}(\psi)$.

The only remaining case is when there exists a nonzero $P_n$-homomorphism 
between the Gelfand--Graev subrepresentations 
$\mathcal{S}{\rm Ind}_{N_n}^{P_n}(\psi)$ of $\sigma_1|_{P_n}$ and $\sigma_2|_{P_n}$. 
Since $\mathcal{S}{\rm Ind}_{N_n}^{P_n}(\psi)$ is irreducible, any such 
homomorphism is an isomorphism. By Schur's lemma, it must be a scalar 
multiple of the identity. After rescaling, we may assume that it is the identity.

Consequently, there exists a nonzero $P_n$-homomorphism from $\sigma_1|_{P_n}$ 
to $\mathcal{S}{\rm Ind}_{N_n}^{P_n}(\psi)$ whose restriction to 
$\mathcal{S}{\rm Ind}_{N_n}^{P_n}(\psi)$ is the identity. This yields a 
splitting of the inclusion 
\vspace{-.1cm}
\[
\begin{tikzcd}
\mathcal{S}{\rm Ind}_{N_n}^{P_n}(\psi)
\arrow[r, hook] 
& \sigma_1|_{P_n},
\end{tikzcd}
\vspace{-.1cm}
\]
contradicting Proposition~\ref{indecomposable}. This completes the proof.
\end{proof}

\begin{remark}
    We assume $n \geq 2$ in Theorem~\ref{main-mirabolic-thm} because $P_1$ is the trivial group. Consequently, the space of homomorphisms between any two non-zero representations of $P_1$ is non-zero.
\end{remark}

   We now complete the proof of Theorem~\ref{1.1}. As noted earlier, 
Theorem~\ref{main-mirabolic-thm} does not hold for $n=1$, so this case 
will be treated separately in Section~\ref{s5.1}.

\medskip
\noindent\textbf{Proof of Theorem~\ref{1.1} ($n \geq 2$).}
\medskip

Assume that $s=s_0$ is an \emph{exceptional pole of type $2$ with level $m$} 
for the pair $(\pi_1,\pi_2)$. 
By Proposition \ref{pole}, $L(s,\pi_1\times\pi_2)$ has a pole at $s=s_0$ of order $n$. 
By Theorem~\ref{main-mirabolic-thm}, we have
\[
\operatorname{Hom}_{P_n}\!\left(\pi_1\hat{\otimes}\pi_2,\ |\det|^{1-s_0}\right)=0.
\]
Hence, by Proposition~\ref{3.1}, $s=s_0$ is an exceptional pole of type $1$ 
with some level $m_0$ for the pair $(\pi_1,\pi_2)$. 
Since every exceptional pole of type $1$ with level $m_0$ is, by definition, 
also an exceptional pole of type $2$ with the same level, it follows that 
$s=s_0$ is an exceptional pole of type $2$ with levels $m$ and $m_0$. 
This is impossible unless $m=m_0$. 
$\qquad\qquad\qquad\qquad\qquad\qquad\qed$

\end{section}

\begin{section}{Archimedean Derivatives and Exceptional $L$-functions}\label{s5}

\subsection{Tate Integrals and Exceptional Poles}\label{s5.1}

In this subsection, we prove Theorem $\ref{1.1}$ in the case $n=1$. Again we assume $s_0=0$ without loss of generality. Consider characters of $\mathbb{R}^\times$
\[
\chi_i = \operatorname{sgn}^{\epsilon_i} \cdot |\cdot|^{s_i}, 
\qquad \epsilon_i \in \{0,1\}, \ s_i \in \mathbb{C}, \quad i=1,2.
\]
We are interested in the set
\[
P := \Big\{ (\epsilon_1, \epsilon_2, s_1, s_2) \in \{0,1\}^2 \times \mathbb{C}^2 
\ \Big|\ 
L(s, \chi_1 \times \chi_2) \ \text{has a pole at } s=0 \Big\}.
\]
The local Rankin--Selberg $L$-factor in this case is given by
\[
L(s, \chi_1 \times \chi_2) \;=\; 
\Gamma_{\mathbb{R}}\!\left(s + s_1 + s_2 + \epsilon_{12}\right),
\]
where
\[
\epsilon_{12} \equiv (\epsilon_1 + \epsilon_2) \pmod{2}, \>\>\epsilon_{12}\in\left\{0,1\right\}.
\]
Therefore, the set $P$ can be described explicitly as
\[
P = \Big\{ (\epsilon_1, \epsilon_2, s_1, s_2) \in \{0,1\}^2 \times \mathbb{C}^2 
\ \Big|\ 
s_1 + s_2 + \epsilon_{12} \in 2\mathbb{Z}_{\leq 0} \Big\}.
\]

We begin with type $2$ exceptional poles. For each integer $m \geq 0$, define
\[
A_{m} 
= \Bigg\{ 
   (\epsilon_1, \epsilon_2, s_1, s_2) \in \{0, 1\}^2 \times \mathbb{C}^2 
   \ \Bigg| \ 
   \begin{aligned}
     & s=0 \ \text{is an exceptional pole of type $2$ with level $m$} \\
     & \text{for the pair } (\chi_1,\chi_2).
   \end{aligned}
   \Bigg\}.
\]
By definition, this is equivalent to
\[
A_{m} = \left\{ 
(\epsilon_1, \epsilon_2, s_1, s_2) \in \{0, 1\}^2 \times \mathbb{C}^2 \ \middle| \ 
\operatorname{Hom}_{G_1}\!\left(\operatorname{sgn}^{\epsilon_1}|\cdot|^{s_1} \times 
\operatorname{sgn}^{\epsilon_2}|\cdot|^{s_2}, \ \operatorname{sgn}^{-m}|\cdot|^{-m}\right) \neq 0
\right\}.
\]
Hence we obtain
\[
A_m = \left\{ 
(\epsilon_1, \epsilon_2, s_1, s_2) \in \{0,1\}^2 \times \mathbb{C}^2 \ \middle| \ 
s_1+s_2 = -m, \quad m \equiv \epsilon_{12} \pmod{2} \right\}.
\]

\begin{prop}
\label{GL(1)-exceptional-union}
We have
\[
\bigcup_{m \geq 0} A_m \;=\; P.
\]
Equivalently, $s=0$ is a pole of the $L$-function $L(s,\chi_1 \times \chi_2)$ 
if and only if there exists an integer $m \geq 0$ such that 
$s=0$ is an exceptional pole of type~$2$ with level~$m$ 
for the pair $(\chi_1,\chi_2)$.
\end{prop}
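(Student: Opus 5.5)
The plan is to prove the two inclusions directly from the explicit descriptions of $P$ and $A_m$ given above. Everything reduces to elementary arithmetic on the exponent $s_1+s_2$ and the parity $\epsilon_{12}$.

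First I will confirm the description of $A_m$. For $n=1$ we have $\rho=\Sym^m(\C)$, on which $a\in\R^\times$ acts by the algebraic character $a\mapsto a^m=\operatorname{sgn}(a)^m|a|^m$. Hence $\Hom_{G_1}(\chi_1\hat\otimes\chi_2\hat\otimes\rho,\C)\neq 0$ if and only if $\chi_1\chi_2=\operatorname{sgn}^{m}|\cdot|^{-m}$, using $\operatorname{sgn}^{-m}=\operatorname{sgn}^m$. Comparing the absolute value parts gives $s_1+s_2=-m$, and comparing the sign parts gives $\epsilon_1+\epsilon_2\equiv m \pmod 2$, that is, $m\equiv\epsilon_{12}\pmod 2$. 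This is the displayed description of $A_m$.

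For the inclusion $A_m\subseteq P$, take $(\epsilon_1,\epsilon_2,s_1,s_2)\in A_m$. Then
\[
s_1+s_2+\epsilon_{12}=\epsilon_{12}-m.
\]
This number is even because $m\equiv\epsilon_{12}\pmod 2$. It is also non-positive. If $\epsilon_{12}=0$ this is clear. If $\epsilon_{12}=1$, then $m$ is odd, so $m\ge 1$. Therefore $s_1+s_2+\epsilon_{12}\in 2\Z_{\le 0}$, and the quadruple lies in $P$.

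For the reverse inclusion, take a quadruple in $P$ and write $s_1+s_2+\epsilon_{12}=-2k$ with $k\in\Z_{\ge 0}$. Set $m:=2k+\epsilon_{12}\ge 0$. Then $s_1+s_2=-m$ and $m\equiv\epsilon_{12}\pmod 2$, so the quadruple lies in $A_m$.

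The equivalent formulation in the statement is then just a restatement of the set equality, via the definitions of $P$ and $A_m$. There is no real obstacle here. The only point needing care is the sign bookkeeping: $\operatorname{sgn}^{-m}=\operatorname{sgn}^m$, and the parity condition excludes the case $m=0$ with $\epsilon_{12}=1$, which is what keeps the value $\epsilon_{12}-m$ non-positive.
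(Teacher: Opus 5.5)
Your proposal is correct and follows essentially the same route as the paper: both inclusions come from the same parity-and-sign arithmetic on $s_1+s_2$ and $\epsilon_{12}$, with $m=\epsilon_{12}-(s_1+s_2+\epsilon_{12})$ giving the reverse inclusion. Your remark that $\epsilon_{12}=1$ forces $m$ odd, hence $m\ge 1$, makes explicit the inequality $\epsilon_{12}-m\le 0$, which the paper asserts without comment.
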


\begin{proof}
Let $(\epsilon_1, \epsilon_2, s_1, s_2) \in A_m$ for some integer $m \ge 0$. By definition, $s_1 + s_2 = -m$ and $\epsilon_{12} \equiv m \pmod{2}$, where recall that $\epsilon_{12} = \epsilon_1 + \epsilon_2 \pmod{2}$. It follows that
\[
s_1 + s_2 + \epsilon_{12} = -m + \epsilon_{12} \le 0, \quad 
s_1 + s_2 + \epsilon_{12} \equiv 0 \pmod{2},
\]
so $s_1 + s_2 + \epsilon_{12} \in 2\mathbb{Z}_{\le 0}$, which shows $(\epsilon_1, \epsilon_2, s_1, s_2) \in P$. Hence $A_m \subset P$ for each $m \ge 0$.

Conversely, let $(\epsilon_1, \epsilon_2, s_1, s_2) \in P$, and write $s_1 + s_2 = -m_0$. Since $s_1 + s_2 + \epsilon_{12} \in 2\mathbb{Z}_{\le 0}$, we have $\epsilon_{12} - m_0 \in 2\mathbb{Z}_{\le 0}$, which implies $m_0 \in \mathbb{Z}_{\ge 0}$ and $\epsilon_{12} \equiv m_0 \pmod{2}$. Thus $(\epsilon_1, \epsilon_2, s_1, s_2) \in A_{m_0}$, proving the reverse inclusion.
\end{proof}

Next, we look at type~$1$ exceptional poles. For each $m \geq 0$, define
\[
C_m = \Biggl\{\, 
(\epsilon_1, \epsilon_2, s_1, s_2) \in \{0,1\}^2 \times \mathbb{C}^2 \,\Bigg|\,
\begin{array}{l}
s=0 \text{ is an exceptional pole of type~1} \\[2pt]
\text{with level $m$ for the pair } (\chi_1,\chi_2).
\end{array}
\,\Biggr\}.
\]
It is clear from the definition that
\begin{equation}
\label{one-inclusion}
C_m \subseteq A_m.
\end{equation}
For $\Phi \in \mathcal{S}(\mathbb{R})$ and characters 
$\chi_1,\chi_2:\mathbb{R}^\times \to \mathbb{C}^\times$, 
Tate\cite{Tate1979} defined the zeta integrals
\[
I(s,\chi_1,\chi_2,\Phi)
= \int_{\mathbb{R}^\times} \chi_1(x)\chi_2(x)\,\Phi(x)\,|x|^s\,d^\times x.
\]
Consider the family of integrals
\[
\{\, I(s,\chi_1,\chi_2,\Phi)
   \;|\; 
          \Phi \in \mathcal{S}(\mathbb{R}) \,\}.
\]
(We remark that Whittaker functionals for characters of $\mathbb{R}^\times$ are just their scalar multiples). 
Assume that this family has a pole at $s=0$ of maximal order $d$ (in this case, $d=1$), which we know coincides with the order of the pole at $s=0$ of the $L$-function $L(s,\chi_1 \times \chi_2)$.  
Then, we can write its Laurent expansion as:
\[
I(s, \chi_1, \chi_2, \Phi) = \frac{B_0( \Phi)}{s^d} + \ldots
\]
Multiplying $s^d$ on both sides, we get 
\[
\quad s^d I(s, \chi_1,\chi_2, \Phi) = B_0( \Phi) + s( \cdot ) + s^2( \cdot )+ ...
\]
Now, we take the limit $s \rightarrow 0$ to deduce that
\begin{equation}\label{2.2}
  \quad B_0( \Phi) = \lim_{s \to 0} \left(s^d L(s, \chi_1,\times \chi_2)\right) \frac{I(0, \chi_1, \chi_2, \Phi)}{L(0, \chi_1\times \chi_2)}
\end{equation}
Note that the limit above is a non-zero complex number. We now complete the proof of Theorem \ref{1.1} in the case $n=1$.

\begin{thm}
For each $m \geq 0$ one has
\[
A_m = C_m.
\]
In particular, $s=0$ is an exceptional pole of type~1 with level $m$
for the pair $(\chi_1,\chi_2)$ if and only if it is an exceptional pole 
of type~2 with level $m$ for the same pair.
\end{thm}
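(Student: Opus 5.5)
We already have the inclusion $C_m \subseteq A_m$ from \eqref{one-inclusion}, so it remains to show $A_m \subseteq C_m$. Fix $(\epsilon_1,\epsilon_2,s_1,s_2)\in A_m$, so that $s_1+s_2=-m$ and $m\equiv \epsilon_{12}\pmod 2$. By Proposition~\ref{GL(1)-exceptional-union}, $s=0$ is then a (simple) pole of $L(s,\chi_1\times\chi_2)$. Since the family of Tate integrals attains the $L$-factor, the leading coefficient $B_0$ is a nonzero continuous linear functional on $\mathcal{S}(\mathbb{R})$. By \eqref{2.2} it is a nonzero multiple of the regularized value $\Phi\mapsto I(0,\chi_1,\chi_2,\Phi)/L(0,\chi_1\times\chi_2)$. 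Alternatively, and more concretely, $B_0(\Phi)$ can be read off as the residue of the Tate integral at $s=0$.

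The plan is to compute $B_0$ explicitly. I will split $\mathbb{R}^\times$ into $|x|\le 1$ and $|x|>1$; the second piece is entire in $s$. On $|x|\le 1$, I Taylor expand $\Phi(x)=\sum_{k<M}\Phi^{(k)}(0)x^k/k!+R_M(x)$, where $R_M(x)=O(|x|^M)$. The remainder integral is holomorphic for $\operatorname{Re}(s)>-M-\operatorname{Re}(s_1+s_2)$, which contains $s=0$ once $M>m$. Each monomial contributes
\[
\frac{\Phi^{(k)}(0)}{k!}\int_{|x|\le1}\operatorname{sgn}(x)^{\epsilon_1+\epsilon_2+k}|x|^{s-m+k}\,d^\times x .
\]
This vanishes unless $k\equiv \epsilon_{12}\pmod 2$, and otherwise equals $2/(s-m+k)$ up to normalization of $d^\times x$. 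The only term with a pole at $s=0$ is therefore $k=m$, and the parity condition holds there exactly because $m\equiv\epsilon_{12}\pmod 2$. Consequently $B_0(\Phi)=c\,\Phi^{(m)}(0)$ for a nonzero constant $c$.

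From this formula the conclusion is immediate. $B_0$ vanishes on $\mathcal{S}^{m+1}$, since all derivatives of order at most $m$ vanish there. On the other hand, $B_0$ is nonzero on $\mathcal{S}^m$: take $\Phi(x)=x^m e^{-x^2}\in\mathcal{S}^m$, which has $\Phi^{(m)}(0)=m!\neq0$. Hence $s=0$ is an exceptional pole of type~1 with level exactly $m$, i.e. the tuple lies in $C_m$, which gives $A_m=C_m$.

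The main care point is bookkeeping rather than depth. The order $d$ of the pole must be $1$, so that $B_0$ is precisely this residue. I also need the Whittaker data to be trivial in rank one, so that the trilinear form reduces to a functional in $\Phi$ alone. Finally, I must confirm that the remainder and large-$|x|$ pieces are genuinely holomorphic at $s=0$, which requires $\operatorname{Re}(s_1+s_2)=-m$ to be compatible with the choice $M>m$.
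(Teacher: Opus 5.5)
Your argument is correct, but it is organized differently from the paper's. The paper never computes $B_0$ in full. For $\Phi\in\mathcal{S}^{m+1}$ it writes $\Phi(x)=x^{m+1}g(x)$ with $g\in\mathcal{S}(\mathbb{R})$, so the integrand $|x|^s x^{-m}\Phi(x)=|x|^s\,x\,g(x)$ is visibly regular at $x=0$. Hence the Tate integral is holomorphic at $s=0$, and $B_0(\Phi)=0$ follows from \eqref{2.2}. Nonvanishing on $\mathcal{S}^m$ then comes from the same kind of test function you use, $x^m e^{-\pi x^2}$, whose integral is exactly $\Gamma_{\mathbb{R}}(s)$. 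You instead determine the whole residue functional, $B_0(\Phi)=c\,\Phi^{(m)}(0)$, by splitting at $|x|=1$, Taylor expanding, and using the parity cancellation of the monomials. This costs a little more bookkeeping: the remainder estimate and the vanishing of the monomials with the wrong sign character. In exchange it shows directly that every Tate integral has at most a simple pole at $s=0$ and that this order is attained. So you get $d=1$ without appealing to the identification of $d$ with the pole order of $L(s,\chi_1\times\chi_2)$. It also exhibits $B_0$ as factoring through the $m$-th Taylor coefficient, i.e.\ through $\mathcal{S}^m/\mathcal{S}^{m+1}\cong\operatorname{Sym}^m(\mathbb{C})$ on $\mathcal{S}^m$, which makes the match with the type~2 condition transparent. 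The paper's version is shorter because Hadamard division disposes of all of $\mathcal{S}^{m+1}$ at once.
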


\begin{proof}
By \eqref{one-inclusion}, it suffices to show the reverse inclusion $A_m \subseteq C_m$.
Let $(\epsilon_1,\epsilon_2,s_1,s_2) \in A_m$. Then
\[
m \equiv \epsilon_1 + \epsilon_2 \pmod{2}, 
\qquad s_1 + s_2 = -m.
\]
Upon replacing $\chi_i$ with $\operatorname{sgn}^{\epsilon_i} \cdot |\cdot|^{s_i}$ for $i=1, 2$, we get
\[
I(s,\chi_1,\chi_2,\Phi)
=\int_{\mathbb{R}^\times} \operatorname{sgn}^m(x)\,|x|^{\,s-m}\,\Phi(x)\,d^\times x 
= \int_{\mathbb{R}^\times} |x|^s \,\bigl(x^{-m}\Phi(x)\bigr)\,d^\times x.
\]
\medskip
\noindent\textbf{Claim.} 
$I(s,\chi_1,\chi_2,\Phi)$ is holomorphic at $s=0$ whenever 
$\Phi \in \mathcal{S}^{m+1}$.

\smallskip
\noindent\emph{Proof of claim.}
If $\Phi \in \mathcal{S}^{m+1}$, then $\Phi$ vanishes to order at least 
$m+1$ at $0$. Hence we can write
\[
\Phi(x)=x^{m+1}g(x)
\]
for some $g \in\mathcal{S}(\mathbb{R})$. Substituting this into the 
integral gives
\[
I(s,\chi_1,\chi_2,\Phi)
= \int_{\mathbb{R}^\times} |x|^s (x^{-m}\Phi(x))\,d^\times x
= \int_{\mathbb{R}^\times} |x|^s (xg(x))\,d^\times x.
\]
Since $g$ is a Schwartz function, $xg(x)$ decays rapidly at 
infinity and vanishes at $x=0$. Hence $I(s,\chi_1,\chi_2,\Phi)$ is holomorphic at $s=0$, which 
proves the claim. \hfill \qedsymbol

\medskip
By the claim, we obtain
\[
\frac{I(0,\chi_1,\chi_2,\Phi)}{L(0,\chi_1\times\chi_2)} = 0.
\]
Hence, by equation~\eqref{2.2}, it follows that  
\[
B_0(\phi) \equiv 0 \quad \text{for} \quad \phi \in \mathcal{S}^{m+1}.
\]

Now consider $\Phi(x) = x^m e^{-\pi x^2}$. Clearly $\Phi \in \mathcal{S}^m$, and
\[
I(s,\chi_1,\chi_2,\Phi) 
= \int_{\mathbb{R}^\times} |x|^s e^{-\pi x^2}\, d^\times x
= \Gamma_{\mathbb{R}}(s).
\]
The function $\Gamma_{\mathbb{R}}(s)$ has a simple pole at $s=0$, and so does $L(s,\chi_1 \times \chi_2)$.  
Therefore,
\[
\frac{I(0,\chi_1,\chi_2,\Phi)}{L(0,\chi_1\times\chi_2)} \ne 0.
\]
Once again, by equation~\eqref{2.2}, this implies $B_0(\Phi) \ne 0$. Hence $(\epsilon_1,\epsilon_2,s_1,s_2) \in C_m$. \qedhere
\end{proof}

\subsection{Exceptional $L$-functions and Derivatives}\label{s5.2}

Let $\pi_1=\mu_1 \times \mu_2 \times \cdots \times \mu_n$ and $\pi_2=\chi_1\times\chi_2 \times \cdots\times\chi_n$ be irreducible principal series representations of $G_n$ in general position.

\begin{thm}\label{classification}
Let $s_0\in\C$ and $m\ge 0$. The following conditions are equivalent:
\begin{enumerate}[label=$(\it{\roman*})$]
    \item\label{classification-1} $s=s_0$ is an exceptional pole of type $2$ with level $m$ for the pair $(\pi_1,\pi_2)$.
    
    \item\label{classification-2} There exists a multi-index $l=(l_1,\dots,l_n)$ with $l_j\ge 0$
and $\sum_{j=1}^n l_j=m$ such that, after reordering the inducing characters of $\pi_1$ if necessary,
\[
\mu_j=(\chi_j \alpha^{l_j})^{-1}|\cdot|^{-s_0},
\qquad 1\le j\le n.
\]
    
    \item\label{classification-3} $s=s_0$ is an exceptional pole of type $1$ with level $m$ for the pair $(\pi_1,\pi_2)$.
\end{enumerate}
\end{thm}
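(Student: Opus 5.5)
The plan is to prove the cycle \ref{classification-1}$\Leftrightarrow$\ref{classification-3} and \ref{classification-1}$\Leftrightarrow$\ref{classification-2}.

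The equivalence \ref{classification-1}$\Leftrightarrow$\ref{classification-3} is exactly Theorem~\ref{1.1}. For $n\ge2$ it was proved at the end of Section~\ref{s4}, and for $n=1$ it is the identity $A_m=C_m$ of Subsection~\ref{s5.1}.

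For \ref{classification-1}$\Rightarrow$\ref{classification-2} I would invoke Proposition~\ref{principal-cong-inverse}. It gives $\pi_1\cong \operatorname{Ind}_{B_n}^{G_n}\bigl((\chi_j^{(l)})^{-1}|\cdot|^{-s_0}\bigr)_j$ for some multi-index $l$ with $|l|=m$, where $\chi_j^{(l)}=\chi_j\alpha^{l_j}$. Irreducible principal series in general position determine their inducing characters up to permutation. Hence, after reordering, $\mu_j=(\chi_j\alpha^{l_j})^{-1}|\cdot|^{-s_0}$.

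For \ref{classification-2}$\Rightarrow$\ref{classification-1} I would reverse the argument of Section~\ref{s3}. Twist so that $s_0=0$. Then $\pi_1^\vee\cong \chi_1\alpha^{l_1}\times\cdots\times\chi_n\alpha^{l_n}$, and this is the graded piece $\sigma_i$ of $\pi_2\hat\otimes\rho$ with $l^{(i)}=l$. We need a nonzero $G_n$-map $\pi_2\hat\otimes\rho\to\sigma_i$. The filtration $M_\bullet$ only exhibits $\sigma_i$ as a subquotient, so this requires more work. I would use the generalized infinitesimal character to split $\pi_2\hat\otimes\rho$. Each $\sigma_k\cong\chi_1^{(l^{(k)})}\times\cdots\times\chi_n^{(l^{(k)})}$ has infinitesimal character determined, up to $S_n$, by the multiset $\{s_j+l^{(k)}_j\}$. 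By general position ($s_j-s_{j'}\notin\Z$), this multiset determines $l^{(k)}$ uniquely. So the $\sigma_k$ have pairwise distinct infinitesimal characters.

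The finite-length module $\pi_2\hat\otimes\rho$ decomposes as a direct sum of its generalized eigenspaces for $\mathcal Z(\mathfrak g)$. Each summand has a filtration with a single irreducible subquotient $\sigma_k$, so each summand is isomorphic to $\sigma_k$. Hence $\pi_2\hat\otimes\rho\cong\bigoplus_k\sigma_k$. Projecting onto $\sigma_i\cong\pi_1^\vee$ gives a nonzero element of $\Hom_{G_n}(\pi_1\hat\otimes\pi_2\hat\otimes\rho,\C)$, which is \ref{classification-1}.

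The main obstacle is justifying this splitting in the Fréchet category $\mathcal S\mathrm{mod}_{G_n}$. Two points need care. First, the $\mathcal Z(\mathfrak g)$-generalized eigenspace decomposition of a finite-length smooth moderate-growth representation is a topological direct sum; this follows from Casselman--Wallach, arguing on the Harish-Chandra module. Second, one needs $l\mapsto\{s_j+l_j\}$ to be injective modulo $S_n$. This holds because any permutation matching $s_j+l_j$ with $s_{j'}+l'_{j'}$ for $j\neq j'$ would force $s_j-s_{j'}\in\Z$.
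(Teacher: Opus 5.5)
Your proposal is correct, but it is organized differently from the paper. The paper runs the cycle (i)$\Rightarrow$(ii)$\Rightarrow$(iii)$\Rightarrow$(i). For (ii)$\Rightarrow$(iii) it points back to the proofs of Proposition~\ref{pole}, Theorem~\ref{main-mirabolic-thm} and Theorem~\ref{1.1}. Those arguments only use the shape of the inducing data in (ii). So (ii) already gives a pole of order $n$ and $\Hom_{P_n}(\pi_1\hat\otimes\pi_2,|\det|^{1-s_0})=0$, and Proposition~\ref{3.1} then yields a type~1 pole at some level $m_0$. Under (ii) alone one does not yet know that $s_0$ is of type~2 at level $m$, so getting $m_0=m$ needs uniqueness of the multi-index $l$. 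That is exactly your injectivity of $l\mapsto\{s_j+l_j\}$ modulo $S_n$.

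You instead prove (ii)$\Rightarrow$(i) directly, by splitting $\pi_2\hat\otimes\rho\cong\bigoplus_{i=1}^{N}\sigma_i$ along infinitesimal characters. You then quote Theorem~\ref{1.1} verbatim for (i)$\Leftrightarrow$(iii). This buys the following:
\begin{itemize}
\item All the analytic input (zeta integrals, the mirabolic Bernstein--Zelevinsky filtration) stays inside Theorem~\ref{1.1}.
\item (i)$\Leftrightarrow$(ii) becomes a purely algebraic statement about $G_n$-homomorphisms.
\item As a by-product, the type~2 Hom-space has dimension at most one at each level, since the $\sigma_i$ are irreducible and pairwise non-isomorphic.
\end{itemize}
The paper's route saves the splitting lemma, at the cost of re-running earlier proofs under a weaker hypothesis.

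Your splitting step does not even need Casselman--Wallach. $\mathcal{Z}(\mathfrak{g})$ acts on each graded piece $\sigma_i$ by a scalar character $\lambda_i$. Hence $\prod_i(z-\lambda_i(z))$ annihilates $\pi_2\hat\otimes\rho$ outright, via the filtration $M_\bullet$. Choose $z$ with pairwise distinct values $\lambda_i(z)$; such $z$ exists because the $\lambda_i$ are pairwise distinct. Then the spectral projections are Lagrange polynomials in $z$, hence continuous and $G_n$-equivariant. Equivariance holds because the adjoint action of $G_n$ on $\mathcal{Z}(\mathfrak{g})$ is trivial, as it factors through the connected group $\GL_n(\C)$.
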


\begin{proof}
The case $n=1$ is treated in Section~\ref{s5}. Assume henceforth that $n\ge 2$. The implication \ref{classification-1}$\Rightarrow$\ref{classification-2} is Proposition~\ref{principal-cong-inverse}. The implication \ref{classification-2}$\Rightarrow$\ref{classification-3} follows from the proofs of Theorem~\ref{main-mirabolic-thm} and Theorem~\ref{1.1}. Finally, the implication \ref{classification-3}$\Rightarrow$\ref{classification-1} is immediate from the definition.
\end{proof}

This theorem allows us to introduce a unified notion of exceptional poles.

\begin{defn}
We say that $s=s_0$ is an \emph{exceptional pole with level $m$} for the pair $(\pi_1,\pi_2)$ if it satisfies the equivalent conditions of Theorem~\ref{classification}.
\end{defn}

\begin{remark}\label{order}
By Proposition~\ref{pole}, every exceptional pole is of order exactly $n$.
\end{remark}

We set
\[
P_{ex}\coloneqq\{\,s_0\in\C \,|\, s=s_0 \text{ is an exceptional pole for the pair } (\pi_1,\pi_2)\,\}.
\]

Let the inducing characters of $\pi_2$ be $\chi_k(a)=\operatorname{sgn}(a)^{\epsilon_k}|a|^{s_k}$ for $k=1,\dots,n$, where $\epsilon_k \in \{0,1\}$ and $s_k \in \mathbb{C}$.

\begin{prop}\label{exceptional-intersect}
    Let $s=s_0$ be an exceptional pole with level $m$ for the pair $(\pi_1,\pi_2)$. Let $l=(l_1,\dots,l_n)$ be the associated multi-index with $l_j\ge 0$, 
$\delta_j\equiv l_j \pmod{2}$ with $\delta_j\in\{0,1\}$, and 
$\sum_{j=1}^n l_j=m$, such that the inducing characters $\mu_j$ of $\pi_1$ are
\[
\mu_j=(\chi_j \alpha^{l_j})^{-1}|\,\cdot\,|^{-s_0}
\qquad (1\le j\le n).
\]
Then
\[
P_{ex}\cap (s_0+\mathbb{Z})
=
\left\{
s_0+\min_{1\le i\le n}(l_i-\delta_i)-2k
\,\middle|\, k\in\mathbb{Z}_{\ge0}
\right\}.
\]
\end{prop}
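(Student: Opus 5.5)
By Theorem~\ref{classification}, the set $P_{ex}$ is determined entirely by condition \ref{classification-2}. So the plan is to translate membership $s_1\in P_{ex}$, with $s_1=s_0+e$ and $e\in\mathbb{Z}$, into an explicit arithmetic condition on $e$, and then compare that condition with the given multi-index $l$. No analytic input should be needed beyond Theorem~\ref{classification}.

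\emph{Step 1 (reduce to a matching of characters).} By Theorem~\ref{classification}, $s_1$ is an exceptional pole (at some level $m'$) if and only if there are a multi-index $l'=(l'_1,\dots,l'_n)$ with $l'_j\ge 0$ and a permutation $\tau\in S_n$ such that
\[
\mu_j=(\chi_{\tau(j)}\alpha^{l'_{\tau(j)}})^{-1}|\cdot|^{-s_1}\qquad (1\le j\le n).
\]
The reordering allowed in \ref{classification-2} is encoded by $\tau$. Comparing with the given expression $\mu_j=(\chi_j\alpha^{l_j})^{-1}|\cdot|^{-s_0}$, we get
\[
\chi_j\chi_{\tau(j)}^{-1}=\alpha^{l'_{\tau(j)}-l_j}\,|\cdot|^{s_1-s_0}.
\]
The right-hand side is a character of the form $\operatorname{sgn}^{\epsilon}|\cdot|^{r}$ with $r\in\mathbb{Z}$. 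If $\tau(j)\ne j$, this says $s_j-s_{\tau(j)}\in\mathbb{Z}$, which contradicts the general position of $\pi_2$; this is the same argument as in the proof of Theorem~\ref{main-mirabolic-thm}. Hence $\tau$ is the identity.

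\emph{Step 2 (solve the diagonal equations).} With $\tau=\mathrm{id}$, the condition for each $j$ is
\[
\operatorname{sgn}^{l'_j}|\cdot|^{l'_j+s_1}=\operatorname{sgn}^{l_j}|\cdot|^{l_j+s_0}.
\]
This is equivalent to $l'_j=l_j-e$ together with $l'_j\equiv l_j \pmod 2$. The second condition says exactly that $e$ is even. So $s_0+e\in P_{ex}$ if and only if $e\in 2\mathbb{Z}$ and $l_j-e\ge 0$ for every $j$, that is, $e\le \min_j l_j$. Conversely, any such $e$ gives an admissible $l'$ at level $m-ne$, so every such point really lies in $P_{ex}$. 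Points $s_1\notin s_0+\mathbb{Z}$ are excluded from the outset, since the statement only concerns $P_{ex}\cap(s_0+\mathbb{Z})$.

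\emph{Step 3 (identify with the stated set).} Note that $l_i-\delta_i$ is the largest even integer $\le l_i$. Therefore $\min_i(l_i-\delta_i)$ is the largest even integer $\le \min_i l_i$. It follows that
\[
\{e\in 2\mathbb{Z} : e\le \min_i l_i\}=\bigl\{\min_i(l_i-\delta_i)-2k : k\in\mathbb{Z}_{\ge0}\bigr\},
\]
which is the claimed description.

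The only point requiring care is Step~1, namely ruling out a nontrivial permutation between the two parametrizations. This uses only that the $s_j-s_k$ are non-integral for $j\neq k$, so I expect no real obstacle. Everything else is bookkeeping of signs and parities, done exactly as in the proof of Proposition~\ref{pole}.
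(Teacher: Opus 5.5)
Your proof is correct. It differs from the paper's in how the inclusion $\subseteq$ is obtained.

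The paper argues through the $L$-function. It factors $L(s,\pi_1\times\pi_2)=\prod_{j,k}L(s,\mu_j\chi_k)$ and uses general position of $\pi_2$ to show that the off-diagonal factors have no poles in $s_0+\mathbb{Z}$. It then invokes Proposition~\ref{pole}, which says every exceptional pole has order exactly $n$. Hence a point of $P_{ex}\cap(s_0+\mathbb{Z})$ must be a common pole of the $n$ diagonal factors $\Gamma_{\mathbb{R}}(s-s_0-l_j+\delta_j)$, and their common pole set is the stated progression. For $\supseteq$, the paper sets $l'_j=l_j-\min_i(l_i-\delta_i)+2k$ and applies \ref{classification-2}$\Rightarrow$\ref{classification-1} of Theorem~\ref{classification}, exactly as in your Step~2.

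You instead apply \ref{classification-1}$\Rightarrow$\ref{classification-2} at the new point $s_1$ and compare the two parametrizations of the $\mu_j$ directly. You kill the permutation by general position of $\pi_2$, which plays the role of the paper's off-diagonal analysis.

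What each route buys:
\begin{itemize}
\item Your route is pure character bookkeeping. It needs neither Proposition~\ref{pole} nor any $\Gamma_{\mathbb{R}}$-pole analysis, it proves both inclusions as a single equivalence, and it records the level $m-ne$ of each point.
\item The paper's route never has to track a reordering, since the $L$-factor is symmetric in the inducing data. It identifies $P_{ex}\cap(s_0+\mathbb{Z})$ as the set where all $n$ diagonal factors have poles simultaneously. That is the viewpoint it reuses in Corollary~\ref{exceptional-set}, Definition~\ref{Lex} and Theorem~\ref{order-characterization}.
\end{itemize}
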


\begin{proof}
The Rankin--Selberg $L$-function factorizes as
\[
L(s,\pi_1\times\pi_2)=\prod_{j=1}^n\prod_{k=1}^n L(s,\mu_j\chi_k).
\]
First consider the off-diagonal factors with $j\ne k$. We have
\[
\mu_j\chi_k(a)
=\operatorname{sgn}(a)^{\epsilon_j+\epsilon_k+l_j}
|a|^{\,s_k-s_j-l_j-s_0},
\]
and therefore
\[
L(s,\mu_j\chi_k)
=\Gamma_{\mathbb{R}}\!\left(s-s_0+s_k-s_j-l_j+\delta_{jk}\right)
\]
for some $\delta_{jk}\in\{0,1\}$.
For such a factor to have a pole in the set $s_0+\mathbb{Z}$, it would be necessary that
\[
s_k-s_j-l_j+\delta_{jk}\in\mathbb{Z}.
\]
Since $\pi_2$ is in general position, we have $s_k-s_j\notin\mathbb{Z}$ for $k\ne j$. 
Hence none of the off-diagonal factors has a pole in the set $s_0+\mathbb{Z}$.

We now consider the diagonal factors $j=k$. There are precisely $n$ such factors. Since an exceptional pole is of order $n$, any complex number in the set $s_0+\mathbb{Z}$ can be an exceptional pole only if it is a pole of each diagonal factor. For $j=k$, we obtain
\begin{equation}
\label{mu-chi-diagonal}
\mu_j\chi_j(a)=\operatorname{sgn}(a)^{l_j}|a|^{-(l_j+s_0)}.
\end{equation}
Thus
\[
L(s,\mu_j\chi_j)=\Gamma_{\mathbb{R}}(s-s_0-l_j+\delta_j).
\]
Therefore
\[
P_{ex}\cap (s_0+\mathbb{Z})\subseteq
\left\{
s_0+\min_{1\le i\le n}(l_i-\delta_i)-2k
\,\middle|\, k\in\mathbb{Z}_{\ge0}
\right\}.
\]
For the reverse inclusion, let
\[
s_1=s_0+\min_{1\le i\le n}(l_i-\delta_i)-2k,
\qquad k\in\mathbb Z_{\ge0}.
\]
Define
\[
l'_j=l_j-\min\limits_{1\le i\le n}(l_i-\delta_i)+2k
\qquad (1\le j\le n).
\]
Since $\min\limits_{1\le i\le n}(l_i-\delta_i)\le l_j-\delta_j\le l_j$, we have $l'_j\ge0$ for all $j$. Moreover, since $l_i-\delta_i$ is even for each $i$, the integer $\min\limits_{1\le i\le n}(l_i-\delta_i)$ is even, and hence $l'_j\equiv l_j\pmod{2}$. A direct computation shows that
\[
\mu_j=(\chi_j\alpha^{l'_j})^{-1}|\,\cdot\,|^{-s_1}
=(\chi_j\alpha^{l_j})^{-1}|\,\cdot\,|^{-s_0}.
\]
Thus there exists a multi-index $l'=(l'_1,\dots,l'_n)$ with $l'_j\ge0$ satisfying the condition in Theorem~\ref{classification}. Hence $s_1\in P_{ex}$ as required.
\end{proof}

\begin{corollary}\label{exceptional-set}
Suppose $P_{ex}\neq \varnothing$. Then there exist finitely many complex numbers 
$s_1,\dots,s_r$ such that
\[
P_{ex}=\bigsqcup_{i=1}^{r}\{\,s_i-2k \,|\, k\in\mathbb Z_{\ge0}\,\}.
\]
\end{corollary}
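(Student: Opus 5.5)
The plan is to deduce the corollary directly from Proposition~\ref{exceptional-intersect}. The real content is a finiteness statement: $P_{ex}$ meets only finitely many cosets $s+\mathbb{Z}$, and on each such coset it is a single downward arithmetic progression of step $2$.

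First I would show that only finitely many cosets $s+\mathbb{Z}$ meet $P_{ex}$. By Theorem~\ref{classification}, if $s_0\in P_{ex}$, then after reordering the inducing characters of $\pi_1$ by some permutation $w\in S_n$ there is a multi-index $l$ with
\[
\mu_{w(j)}=(\chi_j\alpha^{l_j})^{-1}|\cdot|^{-s_0}
\]
for all $j$. Comparing absolute-value exponents gives
\[
s_0 \equiv -s_j-t_{w(j)} \pmod{\mathbb{Z}}, \qquad \text{where } \mu_i=\operatorname{sgn}^{\eta_i}|\cdot|^{t_i}.
\]
Hence $s_0$ lies in one of the finitely many cosets $-s_1-t_{w(1)}+\mathbb{Z}$, with $w\in S_n$. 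So $P_{ex}$ is contained in a finite union of cosets of $\mathbb{Z}$. Let $C_1,\dots,C_r$ be the distinct cosets meeting $P_{ex}$; they are pairwise disjoint.

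Second, on each $C_i$ I would pick a point $s_0^{(i)}\in P_{ex}\cap C_i$ and apply Proposition~\ref{exceptional-intersect} to it. This gives
\[
P_{ex}\cap C_i=\{\,s_i-2k \mid k\in\mathbb{Z}_{\ge 0}\,\}, \qquad s_i:=s_0^{(i)}+\min_{j}(l_j-\delta_j).
\]
Here $l$ is the multi-index attached to $s_0^{(i)}$. Note that $C_i=s_0^{(i)}+\mathbb{Z}=s_i+\mathbb{Z}$, since $\min_j(l_j-\delta_j)$ is an integer. Taking the union over $i$ gives the claimed decomposition, and the union is disjoint because the cosets $C_i$ are.

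The only mild obstacle is the bookkeeping with the reordering of the $\mu_j$ in Theorem~\ref{classification}\ref{classification-2}, i.e.\ making sure the coset of $s_0$ is pinned down by a choice of permutation; since $S_n$ is finite, this gives finiteness.
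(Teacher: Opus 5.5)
Your argument is correct and is essentially the paper's: you confine $P_{ex}$ to finitely many cosets of $\mathbb{Z}$, apply Proposition~\ref{exceptional-intersect} on each coset, and get disjointness from the cosets. The only difference is that you read the finiteness of the cosets directly off condition (ii) of Theorem~\ref{classification}, whereas the paper deduces it from the fact that exceptional poles are poles of $L(s,\pi_1\times\pi_2)$ (Proposition~\ref{pole}), which is a finite product of $\Gamma_{\mathbb{R}}$-factors; both routes ultimately rest on Proposition~\ref{principal-cong-inverse}.
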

    
\begin{proof}
Since the $L$-function $L(s,\pi_1\times\pi_2)$ is a finite product of $\Gamma_{\mathbb{R}}$-factors, its poles lie in finitely many arithmetic progressions of the form $s_i-2\mathbb{Z}_{\ge0}$. The result now follows from Proposition \ref{exceptional-intersect}.
\end{proof}

We are now in a position to define the exceptional $L$-factor.

\begin{defn}\label{Lex}
The \emph{exceptional $L$-factor} of $\pi_1$ and $\pi_2$ is defined by
\[
L_{ex}(s,\pi_1\times\pi_2)
=
\begin{cases}
\qquad 1 & \text{if } P_{ex}=\varnothing,\\[6pt]
\displaystyle \prod_{i=1}^{r}\Gamma_{\mathbb{R}}(s-s_i)^n & \text{if } P_{ex}\neq\varnothing,
\end{cases}
\]
where $s_1,\dots,s_r$ are the complex numbers appearing in Corollary~\ref{exceptional-set}.
\end{defn}

Next, we define the least common multiple for finite products of inverse $\Gamma_{\mathbb{R}}$-factors. 

\begin{defn}\label{lcm}
Let $\mathcal{F} = \{F_1(s), \dots, F_r(s)\}$ be a finite collection of functions, where each
\[
F_i(s) = \prod_{j} \Gamma_{\mathbb{R}}(s - a_{ij})^{-m_{ij}}, \qquad a_{ij} \in \mathbb{C},\;\; m_{ij} \in \mathbb{Z}_{\ge 0},
\]
is a finite product of inverse $\Gamma_{\mathbb R}$-factors. The \emph{least common multiple} of $\mathcal{F}$ is the unique function of the same form
\[
H(s)  ={\mathrm{l.c.m}}(F_1, \dots, F_r) = \prod_k \Gamma_{\mathbb{R}}(s - c_k)^{-\ell_k}
\]
satisfying the following property: for every $z \in \mathbb{C}$, the order of vanishing of $H$ at $z$ is
\[
\operatorname{ord}_z(H) = \max_{1 \le i \le r} \left(\operatorname{ord}_z(F_i)\right),
\]
where $\operatorname{ord}_z(f)$ denotes the order of the zero of $f$ at $s=z$. Equivalently, $H(s)$ is the minimal product of inverse $\Gamma_{\mathbb R}$-factors such that $H(s)/F_i(s)$ is entire for all $i$.
\end{defn}

\begin{Example}
Consider the functions:
\begin{align*}
F_1(s) &= \Gamma_{\mathbb{R}}(s)^{-1} \Gamma_{\mathbb{R}}(s-2)^{-2}, \\
F_2(s) &= \Gamma_{\mathbb{R}}(s)^{-2} \Gamma_{\mathbb{R}}(s-4)^{-1}, \\
F_3(s) &= \Gamma_{\mathbb{R}}(s-2)^{-1} \Gamma_{\mathbb{R}}(s-4)^{-2}.
\end{align*}
We calculate the maximum order of vanishing at each point $s_0 \in \{4, 2, 0, -2, \dots\}$. 
\[
\begin{array}{c|ccc|c}
s_0 & \operatorname{ord}_{s_0}(F_1) & \operatorname{ord}_{s_0}(F_2) & \operatorname{ord}_{s_0}(F_3) & \max \\
\hline
4  & 0 & 1 & 2 & \mathbf{2} \\
2  & 2 & 1 & 3 & \mathbf{3} \\
0  & 3 & 3 & 3 & \mathbf{3} \\
-2 & 3 & 3 & 3 & \mathbf{3}
\end{array}
\]
For all $s_0 \le 0$, the orders stabilize at $3$. The minimal product realizing these maximums is:
\[
{\mathrm{l.c.m}}(F_1, F_2, F_3) = \Gamma_{\mathbb{R}}(s-4)^{-2} \Gamma_{\mathbb{R}}(s-2)^{-1} .
\]
\end{Example}

Next, we recall the notion of archimedean derivative introduced in \cite{Chai2015}. 
For any $1 \le l \le n$, let $U_{n-l+1}$ be the unipotent radical of the standard parabolic subgroup associated to the partition $(n-l, 1, \dots, 1)$, that is, the subgroup of $N_n$ consisting of matrices having the form
\[
\begin{pmatrix} I_{n-l} & x \\ 0 & u \end{pmatrix},
\]
where $x$ is a $(n-l) \times l$ matrix and $u \in N_l$. Note that $U_1 = N_n$. Denote by $\mathfrak{u}_{n-l+1}$ the corresponding Lie algebras. Let $\mu$ be the differential of $\psi$; then $\mu$ is a linear form on $\mathfrak{n}_n$, the Lie algebra of $N_n$, vanishing on $[\mathfrak{n}_n,\mathfrak{n}_n]$. Define a linear form $\mu_{n-l+1}$ on each $\mathfrak{u}_{n-l+1}$ by
\[
\mu_{n-l+1}(X) = \mu(X_{n-l+1,n-l+2} + \dots + X_{n-1,n}).
\]
Now let $(\pi,V) \in \Irr(G_n)$. For $1 \le l \le n$, let $V_l$ be the closure of the subspace spanned by $\{X \cdot v - \mu_{n-l+1}(X)v \,|\, v \in V, X \in \mathfrak{u}_{n-l+1}\}$.

\begin{defn}
    For each integer $0 \le l \le n$, we define the $l$-th derivative of $\pi$, denoted by $(\pi^{(l)}, V^{(l)})$, as follows:
\begin{enumerate}
    \item[(1)] If $l = 0$, put $(\pi^{(0)}, V^{(0)}) = (\pi, V)$.
    \item[(2)] If $1 \le l \le n$, put $V^{(l)} = V/V_l$, and define the action $\pi^{(l)}$ by
    \[
    \pi^{(l)}(g) \cdot (v + V_l) = |\det (g)|^{-1/2} \pi(g)v + V_l \quad \text{for any } g \in G_{n-l}.
    \]
\end{enumerate}
\end{defn}

The $1$st derivative is essentially computed by Chai  in his Ph.D. thesis \cite[Lemma 3.3.4]{Chai2012}. We compute all higher order derivatives. %See also comments on multiplicity one result right after the proof of \cite[Lemma 3.3.4]{Chai2012}.

\begin{prop}\label{explicit-derivative}
Let $\pi = \chi_1 \times \chi_2 \times \dots \times \chi_n$
be an irreducible principal series representation of \(G_n\) in general position.
Then for $0\leq k < n$, the $k$-th derivative of $\pi$ is given by
\[
\pi^{(k)} \cong \bigoplus_{1 \le j_1 < \dots < j_{n-k} \le n}
\Ind_{B_{n-k}}^{G_{n-k}}(\chi_{j_1} \otimes \dots \otimes \chi_{j_{n-k}}).
\]
In particular, for the $1$-st and $(n-1)$-th derivatives, we have
\[
\pi^{(1)} \cong \bigoplus_{1 \le j \le n}
\Ind_{B_{n-1}}^{G_{n-1}}(\chi_1 \otimes \dots \otimes \hat{\chi}_j \otimes \dots \otimes \chi_n),
\quad
\pi^{(n-1)} \cong \bigoplus_{1 \le j \le n} \chi_j,
\]
where $\hat{\chi}_j$ denotes omission of the $j$-th character.
\end{prop}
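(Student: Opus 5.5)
We plan to compute $\pi^{(k)}$ through the Bernstein--Zelevinsky filtration of $\pi|_{P_n}$ from Proposition~\ref{principal-filtration}, identifying the derivative as an iterate of the functors $\Psi^-$ and $\Phi^-$. By definition, $V^{(k)}$ is the quotient of $V$ by the closed span of $X v-\mu_{n-k+1}(X)v$ for $X\in\mathfrak{u}_{n-k+1}$, twisted by $|\det|^{-1/2}$. Since $\mathfrak{u}_{n-k+1}$ is generated step by step by the Lie algebras of the unipotent radicals $V_n, V_{n-1},\dots$ of the successive mirabolics, we will first show that
\[
\pi^{(k)}\;\cong\;|\det|^{-1/2}\otimes\Phi^{-}\bigl((\Psi^{-})^{k-1}(\pi|_{P_n})\bigr),
\]
up to the harmless modulus normalizations, possibly after passing to Hausdorff quotients. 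The character $\mu_{n-k+1}$ is nontrivial on the simple root spaces in the last $k-1$ positions and trivial on the $(n-k,n-k+1)$ entry. So the first $k-1$ steps are twisted coinvariants ($\Psi^-$), and the last step is untwisted coinvariants. The untwisted step must give $V/\overline{\mathfrak v\,V}$ itself rather than the full completion $\Phi^-$; we will use the $\mathrm{Sym}^0$ graded piece to reconcile the two.

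Next we apply these functors to the finite filtration of Proposition~\ref{principal-filtration}. By \cite[Proposition~4.1, Corollary~4.4]{WZ2025}, and as encoded in Proposition~\ref{depth}, $\Psi^-$ sends $I(\sigma)$ to $\sigma$ and kills $E(\tau)$. The untwisted coinvariant functor sends $E(\tau)$ to $\tau$ and kills $I(\sigma)$. Hence $(\Psi^-)^{k-1}$ followed by untwisted coinvariants kills every subquotient except those of the form $I^{k-1}E(\cdot)$, that is, those with $d_j=k-1$. For these pieces the multi-indexed inverse limit survives: coinvariants of $\varprojlim_{\mathbf i} E(\Pi_j/\mathcal F_{\mathbf i})$ by $\mathfrak v$ give exactly the $\mathbf i=\mathbf 0$ graded piece. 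That piece is $E\bigl(\prod_t |\det|^{1/2}\sigma_t\bigr)$ with all symmetric powers $\mathrm{Sym}^0$, the higher $\mathrm{Sym}^{i}$ being precisely the part killed by the $\mathfrak v$-action. Tracking the inductive construction in the proof of Proposition~\ref{principal-filtration}, these pieces are indexed by $(n-k)$-element subsets $\{j_1<\dots<j_{n-k}\}$. For each such subset the surviving term is $|\det|^{1/2}\,\chi_{j_1}\times\cdots\times\chi_{j_{n-k}}$, and the $|\det|^{-1/2}$ twist in the definition cancels this factor. Each subset occurs exactly once, by the statement of Proposition~\ref{principal-filtration}.

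Finally we must show that the resulting finite filtration of $\pi^{(k)}$, with these irreducible subquotients, splits as a direct sum. The subquotients are pairwise non-isomorphic principal series of $G_{n-k}$ in general position, and they have distinct infinitesimal characters because the subsets differ and $s_i-s_{i'}\notin\mathbb Z$. So the decomposition of $\pi^{(k)}$ into generalized eigenspaces for the center of $\mathcal U(\mathfrak g_{n-k})$ splits every extension; this is the complete reducibility of Chang--Cogdell \cite{CC1999}. We expect the main obstacle to be the second step: justifying that the coinvariant functors are exact enough on these Fréchet filtrations (Hausdorffness of the quotients, commuting with the inverse limits) so that the subquotients of $\pi^{(k)}$ are exactly as computed. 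We would try to handle this by working with the $K$-finite, Harish-Chandra module versions, where $\mathfrak n$-homology is computed algebraically, comparing with \cite[Lemma~3.3.4]{Chai2012} for $k=1$, and then arguing by induction on $k$ via $\pi^{(k)}=(\pi^{(k-1)})^{(1)}$-type transitivity.
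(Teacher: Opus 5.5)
Your route differs from the paper's, and the difference is where the gap opens. The paper does the \emph{untwisted} step first, on $\pi$ itself: $\pi^{(k)}\cong\Psi_0^{k-1}\bigl({\rm H}_0(\mathfrak n_{n-k,k},\pi)\bigr)$. Chang--Cogdell \cite{CC1999} give ${\rm H}_0(\mathfrak n_{n-k,k},\pi)\cong\bigoplus_i\tau^1_i\hat\otimes\tau^2_i$, already a direct sum with the $\tau^1_i$ identified. The paper then applies only the exact twisted functor $\Psi_0^{k-1}$, which collapses each $\tau^2_i$ to a line.

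You apply the twisted functors first and the non-exact untwisted coinvariants last, to the $P_{n-k+1}$-modules $\Pi_j$ of Proposition~\ref{principal-filtration}. Your key assertion is that ${\rm H}_0(\mathfrak v_{n-k+1},\Pi_j)$ is exactly the $\mathbf 0$-graded piece. This does not follow from that proposition, which only says $\mathfrak v$ acts trivially on each graded piece. That is equally consistent with $\mathfrak v$ acting trivially on every $\Pi_j/\mathcal F_{\mathbf i}$, which is what your notation $\varprojlim_{\mathbf i}E(\Pi_j/\mathcal F_{\mathbf i})$ would literally mean. In that case the coinvariants are all of $\Pi_j$, with infinitely many constituents.

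What you need is $\overline{\mathfrak v\,\Pi_j}=\sum_t\mathcal F_{\mathbf e_t}$ for every multi-indexed piece. This has to come from the geometry behind the filtration: Taylor expansion transverse to closed orbits, and at $0$ after Fourier transform along $V$, where $\mathfrak v$ acts by multiplication by coordinates. Central or infinitesimal characters of $G_{n-k}$ cannot replace this, since the center of $G_{n-k}$ rescales $\mathfrak v$.

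There is a second gap. Right exactness only shows that the graded pieces of $\pi^{(k)}$ are \emph{quotients} of the ${\rm H}_0(\mathfrak v,\Pi_j)$. To see that every $\Ind_{B_{n-k}}^{G_{n-k}}(\chi_{j_1}\otimes\cdots\otimes\chi_{j_{n-k}})$ actually survives, you must kill the connecting maps ${\rm H}_1(\mathfrak v,-)\to{\rm H}_0(\mathfrak v,-)$. Note that ${\rm H}_1(\mathfrak v,E(\tau))=\mathfrak v\otimes\tau\neq0$. The infinitesimal-character separation you invoke only at the end, for splitting, could do this, but the proposal does not carry it out.

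The repairs you sketch do not close these gaps.
\begin{itemize}
\item The identity $\pi^{(k)}=(\pi^{(k-1)})^{(1)}$ is false. Write $\sigma_S=\Ind_{B_{|S|}}^{G_{|S|}}(\bigotimes_{j\in S}\chi_j)$. The case $k=1$ gives $(\pi^{(k-1)})^{(1)}\cong\bigoplus_{|S|=n-k+1}\bigoplus_{j\in S}\sigma_{S\setminus\{j\}}$, in which each $\sigma_T$ with $|T|=n-k$ occurs $k$ times. Iterating derivatives replaces a twisted coinvariant step by an untwisted one.
\item Passing to Harish-Chandra modules is unavailable in your order of operations. The mirabolic filtration is by $P_n$-stable Fr\'echet subspaces with no $(\mathfrak g,K)$-counterpart. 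Casselman-type comparison theorems concern untwisted $\mathfrak n$-homology of $G$-modules, not $\psi$-twisted coinvariants. This is precisely why the paper takes ${\rm H}_0(\mathfrak n_{n-k,k},\cdot)$ first.
\item The twist does not cancel as you claim when $k\ge2$. With $\Psi_0=|\det|^{1/2}\Psi^-$ and $\Psi^-\circ I\cong\mathrm{id}$, the unnormalized coinvariants of $I^{k-1}E(|\det|^{1/2}\sigma_S)$ are $|\det|^{k/2}\sigma_S$. So the stated formula needs the Bernstein--Zelevinsky twist $|\det|^{-k/2}$ rather than $|\det|^{-1/2}$. The paper's proof also passes over this point.
\end{itemize}
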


\begin{proof}
The statement is clear for $k=0$, so assume $k \ge 1$. It was shown in \cite[Theorem A]{AGS2015-2} that the space 
$\Psi^{k-1}_0({\rm H}_0(\mathfrak{n}_{n-k,k},\pi))$ is closed, where 
${\rm H}_0(\mathfrak{n}_{n-k,k},\pi)=\pi/\overline{\mathfrak{n}_{n-k,k}\pi}$.
By \cite[Proposition 3.1]{Chai2015} and \cite[Remark 2.7]{WZ2025}, we have
\[
\pi^{(k)} \cong \Psi^{k-1}_0({\rm H}_0(\mathfrak{n}_{n-k,k},\pi)).
\]

Using \cite[Theorem 4.2]{CC1999} and \cite{GK2013}, the 
$G_{n-k} \times G_k$-representation ${\rm H}_0(\mathfrak{n}_{n-k,k},\pi)$ 
decomposes as
\[
{\rm H}_0(\mathfrak{n}_{n-k,k},\pi) \cong \bigoplus_i (\tau^1_i \hat{\otimes} \tau^2_i),
\]
with $\tau^1_i \in \Irr_{\mathrm{gen}}(G_{n-k})$ and $\tau^2_i \in \Irr_{\mathrm{gen}}(G_k)$. 

Since $\Psi_0$ is exact \cite[Theorem A]{AGS2015-2}, it preserves direct sums, and hence
\[
\pi^{(k)} \cong \bigoplus_i \left(\tau^1_i \otimes \Psi_0^{k-1}(\tau^2_i)\right),
\]
along the lines of \cite[p.~66]{WZ2025}.

Moreover, $\Psi_0^{k-1}(\tau^2_i) \cong V_{\tau^2_i} / V_{\tau^2_i}(N_k,\psi_k)$
is the maximal quotient on which $N_k$ acts via $\psi_k$. Since $\tau^2_i$
is irreducible and generic, this space is one-dimensional. Therefore,
\[
\pi^{(k)} \cong \bigoplus_i \tau^1_i.
\]
Finally, by \cite[Proposition 4.1.2]{CC1999}, the representations $\tau^1_i$ are precisely
\[
\Ind_{B_{n-k}}^{G_{n-k}}(\chi_{j_1} \otimes \dots \otimes \chi_{j_{n-k}}),
\]
for $1 \le j_1 < \dots < j_{n-k} \le n$, and all such constituents occur.
\end{proof}

One might consider using \cite[Theorem~4.2.9]{Chai2015} to complete the proof of Theorem \ref{1.2}. However, that result does not address the order of the poles, since exceptional poles of type $2$ are not realized there as poles of $L(s,\pi_1\times\pi_2)$. To overcome this difficulty, we give a characterization of exceptional poles in terms of the order of the pole.

\begin{thm}\label{order-characterization}
Let $\pi_1=\mu_1\times\mu_2\times\cdots\times\mu_n$ and 
$\pi_2=\chi_1\times\chi_2\times\cdots\times\chi_n$ be irreducible principal series representations of $G_n$ in general position.

\begin{enumerate}[label=$(\it{\roman*})$]
\item\label{order-characterization-1} Suppose $s=s_0$ is a pole of $L(s,\pi_1\times\pi_2)$ of order $k$. Then $k\le n$, and there exist irreducible components $\sigma_1$ of $\pi_1^{(n-k)}$ and $\sigma_2$ of $\pi_2^{(n-k)}$ such that $s=s_0$ is an exceptional pole of order $k$ for $(\sigma_1,\sigma_2)$.

\item\label{order-characterization-2} Conversely, if $s=s_0$ is an exceptional pole of order $k$ for a pair $(\sigma_1,\sigma_2)$, where $\sigma_1$ and $\sigma_2$ are irreducible components of $\pi_1^{(n-k)}$ and $\pi_2^{(n-k)}$, respectively, then $s=s_0$ is a pole of $L(s,\pi_1\times\pi_2)$ of order at least $k$.
\end{enumerate}

In particular, $s=s_0$ is an exceptional pole of $(\pi_1,\pi_2)$ if and only if it is a pole of $L(s,\pi_1\times\pi_2)$ of order $n$.
\end{thm}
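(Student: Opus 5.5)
The whole argument will be combinatorics of $\Gamma_{\mathbb R}$-factors, with Theorem~\ref{classification} supplying the characterization of exceptional poles and Proposition~\ref{explicit-derivative} the explicit derivatives. By Proposition~\ref{explicit-derivative}, the irreducible components of $\pi_1^{(n-k)}$ and $\pi_2^{(n-k)}$ are the principal series
\[
\sigma_1=\mu_{j_1}\times\cdots\times\mu_{j_k},\qquad \sigma_2=\chi_{k_1}\times\cdots\times\chi_{k_k}
\]
attached to $k$-element subsets $J=\{j_1<\dots<j_k\}$ and $K=\{k_1<\dots<k_k\}$ of $\{1,\dots,n\}$. Both are again in general position, since the conditions $s_i\notin\mathbb Z$ and $s_i-s_{i'}\notin\mathbb Z$ are inherited by subsets. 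Consequently Theorem~\ref{classification} (together with Section~\ref{s5.1} when $k=1$) applies to the pair $(\sigma_1,\sigma_2)$. It says that $s_0$ is an exceptional pole for $(\sigma_1,\sigma_2)$ exactly when there is a bijection $\beta:J\to K$ with
\[
\mu_j\chi_{\beta(j)}=\alpha^{-l_j}|\cdot|^{-s_0},\qquad l_j\ge0 .
\]
By Remark~\ref{order}, such a pole has order exactly $k$ for $L(s,\sigma_1\times\sigma_2)$, and this is what ``exceptional pole of order $k$'' will mean.

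The key local observation concerns a single factor $L(s,\mu_j\chi_{k'})=\Gamma_{\mathbb R}(\cdot)$. It has a pole at $s_0$ if and only if
\[
\mu_j\chi_{k'}=\operatorname{sgn}^{\delta}|\cdot|^{r}\quad\text{with } s_0+r+\delta\in 2\mathbb Z_{\le0}.
\]
This condition is equivalent to $\mu_j\chi_{k'}=\alpha^{-l}|\cdot|^{-s_0}$ for the integer $l=s_0+r\ge0$, because $\operatorname{sgn}^{l}=\operatorname{sgn}^{\delta}$ by parity. Next I use general position of both $\pi_1$ and $\pi_2$. If $\mu_j\chi_{k'}$ and $\mu_j\chi_{k''}$ both give poles at $s_0$, then $\chi_{k'}\chi_{k''}^{-1}$ is $\operatorname{sgn}^\epsilon|\cdot|^{\text{integer}}$, which forces $k'=k''$. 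The same argument applies with the roles of $\mu$ and $\chi$ exchanged. Hence the set
\[
E=\{(j,k'):L(s,\mu_j\chi_{k'})\text{ has a pole at }s_0\}
\]
is the graph of a partial matching between $\{1,\dots,n\}$ and itself. Each such factor contributes a simple pole, since $\Gamma_{\mathbb R}$ has only simple poles. Therefore the order of the pole of $L(s,\pi_1\times\pi_2)=\prod_{j,k'}L(s,\mu_j\chi_{k'})$ at $s_0$ equals $|E|$, and in particular it is at most $n$.

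For (i), let $k=|E|$. Take $J$ to be the set of first coordinates of $E$, $K$ the set of second coordinates, and $\beta$ the matching. By the observation above and Theorem~\ref{classification}\ref{classification-2}, $s_0$ is an exceptional pole for $(\sigma_1,\sigma_2)$ with level $m=\sum_{j\in J} l_j$, and it has order $k$. For (ii), an exceptional pole for $(\sigma_1,\sigma_2)$ yields a bijection $\beta:J\to K$ whose $k$ pairs $(j,\beta(j))$ lie in $E$, so $|E|\ge k$. The final assertion is the case $k=n$: then the derivative index is $n-k=0$, so $\sigma_i=\pi_i$, and an exceptional pole of $(\pi_1,\pi_2)$ is exactly a pole of order $n$.

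The main obstacle is keeping the bookkeeping of orders honest. First, the level/order relation must hold for subsets of size $k$; this rests on Proposition~\ref{pole} applied at $G_k$, including the $k=1$ case, which is handled by Section~\ref{s5.1}. Second, the normalization $|\det|^{-1/2}$ in the definition of $\pi^{(l)}$ must be consistent with the untwisted characters listed in Proposition~\ref{explicit-derivative}. I will take that proposition at face value, so that the components involve exactly the characters $\mu_j$ and $\chi_{k'}$ with no shift, and the matching argument then goes through unchanged.
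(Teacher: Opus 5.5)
Your proposal is correct and follows essentially the paper's own argument. You count the polar $\Gamma_{\mathbb R}$-factors in $\prod_{j,k'}L(s,\mu_j\chi_{k'})$, use general position of both $\pi_1$ and $\pi_2$ to see that they form a partial matching (so $k\le n$), and then pass in both directions between matched index sets and derivative components via Proposition~\ref{explicit-derivative} and Theorem~\ref{classification}; you are, if anything, more explicit than the paper that subsets inherit general position and that the case $k=1$ comes from Section~\ref{s5.1}. One small slip to fix: in your single-factor criterion the integer is $l=-(s_0+r)\ge\delta\ge 0$, not $s_0+r$, since $\alpha^{-l}|\cdot|^{-s_0}=\operatorname{sgn}^{l}|\cdot|^{-l-s_0}$.
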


\begin{proof}
First recall that for representations of $G_1$, every pole is exceptional (see Proposition \ref{GL(1)-exceptional-union}). Therefore, for any two characters $\mu$ and $\chi$ of $G_1$, the local $L$-factor $L(s,\mu\times\chi)$ has a pole at $s=s_0$ if and only if
\[
\mu\chi=|\cdot|^{-s_0}\alpha^{-l}
\]
for some integer $l\ge0$.

We first prove \ref{order-characterization-1}. The Rankin--Selberg $L$-function factors as
\[
L(s,\pi_1\times\pi_2)=\prod_{i=1}^n\prod_{j=1}^n L(s,\mu_i\times\chi_j).
\]
Each factor has at most a simple pole. Hence the order $k$ of the pole at $s=s_0$ equals the number of pairs $(i,j)$ such that $L(s,\mu_i\times\chi_j)$ has a pole at $s_0$. For such a pair we have
\[
\mu_i\chi_j=|\cdot|^{-s_0}\alpha^{-l_{ij}}
\]
for some $l_{ij}\ge0$.

Fix $j$. If both $L(s,\mu_{i_1}\times\chi_j)$ and $L(s,\mu_{i_2}\times\chi_j)$ had poles at $s_0$ with $i_1\neq i_2$, then
\[
\mu_{i_1}\mu_{i_2}^{-1}=\alpha^{\,l_{i_2j}-l_{i_1j}},
\]
which contradicts the general position assumption on $\pi_1$. Hence for each fixed $j$ there is at most one $i$ such that $L(s,\mu_i\times\chi_j)$ has a pole at $s_0$. Since $j=1,\dots,n$, it follows that $k\le n$.

Thus there exist $k$ distinct pairs $(i_r,j_r)$, $1\le r\le k$, such that
\[
\mu_{i_r}\chi_{j_r}=|\cdot|^{-s_0}\alpha^{-l_r}
\]
for some $l_r\ge0$. The indices $i_r$ and $j_r$ are distinct. Since principal series representations are unchanged under permutation of the inducing characters, after reordering the inducing characters of $\pi_1$ and $\pi_2$ we may assume
\[
(i_r,j_r)=(r,r),\qquad 1\le r\le k.
\]
Thus
\[
\mu_r\chi_r=|\cdot|^{-s_0}\alpha^{-l_r},\qquad 1\le r\le k.
\]

By Proposition \ref{explicit-derivative}, the representations
\[
\sigma_1={\rm Ind}^{G_k}_{B_k}(\mu_1\otimes\dots\otimes\mu_k),
\qquad
\sigma_2={\rm Ind}^{G_k}_{B_k}(\chi_1\otimes\dots\otimes\chi_k)
\]
occur as irreducible components of $\pi_1^{(n-k)}$ and $\pi_2^{(n-k)}$, respectively. The relations
\[
\mu_r\chi_r=|\cdot|^{-s_0}\alpha^{-l_r},\qquad 1\le r\le k,
\]
therefore satisfy the criterion of Theorem \ref{classification}, and hence $s=s_0$ is an exceptional pole of order $k$ for $(\sigma_1,\sigma_2)$.

We now prove \ref{order-characterization-2}. Suppose $s=s_0$ is an exceptional pole of order $k$ for $(\sigma_1,\sigma_2)$, where $\sigma_1$ and $\sigma_2$ are irreducible components of $\pi_1^{(n-k)}$ and $\pi_2^{(n-k)}$. By Proposition \ref{explicit-derivative}, we may write
\[
\sigma_1={\rm Ind}^{G_k}_{B_k}(\mu_{i_1}\otimes\dots\otimes\mu_{i_k}),\qquad
\sigma_2={\rm Ind}^{G_k}_{B_k}(\chi_{j_1}\otimes\dots\otimes\chi_{j_k})
\]
for some distinct indices $i_1,\dots,i_k$ and $j_1,\dots,j_k$. Since $s_0$ is exceptional, Theorem \ref{classification} implies that we may assume
\[
\mu_{i_r}\chi_{j_r}=|\cdot|^{-s_0}\alpha^{-l_r},\qquad 1\le r\le k,
\]
for some integers $l_r\ge0$. Each of these relations implies that the factor
\[
L(s,\mu_{i_r}\times\chi_{j_r})
\]
has a pole at $s=s_0$. Since these factors occur in the product expression for $L(s,\pi_1\times\pi_2)$, it follows that at least $k$ factors have poles at $s_0$. Hence $L(s,\pi_1\times\pi_2)$ has a pole at $s_0$ of order at least $k$.

Finally, if $k=n$, then the $(n-n)$-th derivatives are $\pi_1$ and $\pi_2$ themselves. Thus $s=s_0$ is an exceptional pole of $(\pi_1,\pi_2)$ if and only if it is a pole of $L(s,\pi_1\times\pi_2)$ of order $n$.
\end{proof}

For each \(k\) with \(0 \le k < n\), write
\[
\pi_1^{(k)} = \bigoplus_i \pi_{1,i}^{(k)}, 
\qquad
\pi_2^{(k)} = \bigoplus_j \pi_{2,j}^{(k)},
\]
where \(\pi_{1,i}^{(k)}\) and \(\pi_{2,j}^{(k)}\) denote the irreducible constituents of the $k$-th derivatives of \(\pi_1\) and \(\pi_2\), respectively.
Theorem~\ref{1.2} now follows from  Theorem~\ref{order-characterization} and Definitions~\ref{Lex} and~\ref{lcm}. We recall its statement.

\medskip
\noindent\textbf{Theorem~\ref{1.2}.}
{\it Let \(\pi_1\) and \(\pi_2\) be irreducible principal series representations of 
\(G_n\) in general position. Then
\[
L(s,\pi_1 \times \pi_2)^{-1}
=
\underset{\,0\le k < n,\; i,j}{\mathrm{l.c.m}}
\left\{
L_{ex}\!\left(s,\pi_{1,i}^{(k)} \times \pi_{2,j}^{(k)}\right)^{-1}
\right\}.
\]
\vspace{-.3cm}
}

\subsection{Strong general position}

Let $\pi_1= \mu_1\times\mu_2\times\dots\times\mu_n$ and $\pi_2 = \chi_1\times\chi_2\times\dots\times\chi_n$ be irreducible principal series representations of $G_n$.
We say that $\pi_1$ and $\pi_2$ are in \emph{strong general position} if
\begin{enumerate}[label=$(\rm{\arabic*})$]
\item \label{strong-general-1} $\pi_1$ and $\pi_2$ are in general position.
\item \label{strong-general-2} Any two $L$-functions $L(s, \mu_{i_1} \times \chi_{j_1})$ and $L(s, \mu_{i_2} \times \chi_{j_2})$ have no common poles
for $(i_1,j_1,i_2,j_2) \in \{1,2,\dots,n\}$ with $\{i_1,j_1\} \neq \{i_2,j_2\}$.
\end{enumerate}

Even when $\pi_1$ and $\pi_2$ are in general position, the $L$-functions $L(s,\mu_i\times\chi_j)$ for different pairs $(i,j)$ may have common poles. For example, this occurs if one takes $\pi_2=\pi_1^{\vee}$. Following Matringe in the non-archimedean setting \cite[Definition 5.1]{Mat2015}, we consider the additional condition \ref{strong-general-2} above and explain how, under this stronger assumption, the proof of Theorem \ref{1.2} becomes much simpler.

\begin{prop}
\label{strong-general-exceptional-Lfunction}
Let $n \geq 2$. Let $\pi_1= \mu_1\times\mu_2\times\dots\times\mu_n$ and $\pi_2 = \chi_1\times\chi_2\times\dots\times\chi_n$ be irreducible principal series representations of $G_n$ in strong general position.
Then 
\[
 L_{ex}(s,\pi^{(k)}_{1,i} \times \pi^{(k)}_{2,j})=1
\]
for each pairs of irreducible constituents $(\pi^{(k)}_{1,i}, \pi^{(k)}_{2,j})$ and $0 \leq k < n-1$.
\end{prop}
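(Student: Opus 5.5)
The plan is to combine the characterization of exceptional poles by pole order (Theorem~\ref{order-characterization}, applied to the derivative constituents themselves) with the explicit description of derivatives (Proposition~\ref{explicit-derivative}). Fix $0\le k<n-1$ and set $n':=n-k\ge 2$. By Proposition~\ref{explicit-derivative}, every irreducible constituent has the form
\[
\pi^{(k)}_{1,i}\cong \mu_{i_1}\times\cdots\times\mu_{i_{n'}},\qquad
\pi^{(k)}_{2,j}\cong \chi_{j_1}\times\cdots\times\chi_{j_{n'}},
\]
with distinct indices $i_1<\dots<i_{n'}$ and $j_1<\dots<j_{n'}$. Both are irreducible principal series of $G_{n'}$ in general position, since they use subsets of the inducing characters of $\pi_1,\pi_2$. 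By Definition~\ref{Lex}, $L_{ex}(s,\pi^{(k)}_{1,i}\times\pi^{(k)}_{2,j})=1$ is equivalent to $P_{ex}=\varnothing$ for this pair, so it suffices to rule out any exceptional pole.

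Suppose, for contradiction, that $s_0$ is an exceptional pole for the pair $(\pi^{(k)}_{1,i},\pi^{(k)}_{2,j})$. Applying the final assertion of Theorem~\ref{order-characterization} (equivalently Remark~\ref{order}) to this pair of $G_{n'}$-representations, $s_0$ is a pole of
\[
L(s,\pi^{(k)}_{1,i}\times\pi^{(k)}_{2,j})=\prod_{r,t=1}^{n'}L(s,\mu_{i_r}\times\chi_{j_t})
\]
of order exactly $n'\ge 2$. Each factor has at most a simple pole, so at least two distinct factors $L(s,\mu_{i_{r_1}}\times\chi_{j_{t_1}})$ and $L(s,\mu_{i_{r_2}}\times\chi_{j_{t_2}})$ share the pole $s_0$. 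More concretely, Theorem~\ref{classification}(ii) gives, after reordering, $n'$ pairs $(i_r,j_r)$ with pairwise distinct first and second entries, each satisfying $\mu_{i_r}\chi_{j_r}=|\cdot|^{-s_0}\alpha^{-l_r}$. Each such relation makes $L(s,\mu_{i_r}\times\chi_{j_r})$ have a pole at $s_0$. Taking $r=1,2$ therefore yields two factors with distinct index pairs and a common pole at $s_0$. This contradicts condition~\ref{strong-general-2} of strong general position, so $P_{ex}=\varnothing$ and $L_{ex}=1$.

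The main obstacle is the precise reading of condition~\ref{strong-general-2}. As written, it only excludes common poles when $\{i_1,j_1\}\neq\{i_2,j_2\}$ as sets, which in principle leaves open the "crossed" case $(i_1,j_1)=(a,b)$, $(i_2,j_2)=(b,a)$. Under the natural reading (following \cite[Definition 5.1]{Mat2015}), where the index pairs $(i,j)$ and $(i',j')$ are compared as ordered pairs with $\mu$-index and $\chi$-index, the two pairs produced above are distinct because their first entries differ, and the argument closes. If one insists on the literal set condition, I would first try to choose, among the $n'$ relations, two pairs whose underlying sets differ. This is automatic when $n'\ge 3$: the pairs $(i_r,j_r)$ have distinct first entries, so at most one other pair can be the reversal of a given one. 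The genuinely delicate case is $n'=2$ with $\{i_1,j_1\}=\{i_2,j_2\}$, and there I would adopt the ordered-pair interpretation of~\ref{strong-general-2}. The restriction $k<n-1$ is used exactly to guarantee $n'\ge 2$. For $k=n-1$ the constituents are single characters, and their $L_{ex}$ is the full Tate $L$-factor by Proposition~\ref{GL(1)-exceptional-union}.
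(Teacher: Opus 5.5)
Your proof is correct and follows the paper's route. By Proposition~\ref{explicit-derivative} the constituents are principal series in general position on subsets of the characters, Theorem~\ref{classification} pairs their characters as $\mu\chi=|\cdot|^{-s_0}\alpha^{-l}$, and since $n-k\ge 2$ at least two factors $L(s,\mu\times\chi)$ share the pole $s_0$, contradicting condition~\ref{strong-general-2}.

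Your caution about the set condition $\{i_1,j_1\}\neq\{i_2,j_2\}$ is justified. The paper's proof silently relabels the pairs as $(j,j)$, so it really uses the ordered-pair reading. Under the literal set reading the statement fails in the crossed case: take $n=2$, $\chi_1=\mu_2^{-1}$, $\chi_2=\mu_1^{-1}$ with $\mu_1,\mu_2$ generic, i.e.\ $\pi_2\cong\pi_1^\vee$ with its characters listed in reverse order. Adopting the ordered-pair interpretation is therefore the right fix.
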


\begin{proof}
Suppose that $s=s_0$ is an exceptional pole with level $m$ for the pair $(\pi^{(k)}_{1,i}, \pi^{(k)}_{2,j})$.
By Theorem \ref{classification}, there exists a multi-index $l=(l_1,\dots,l_{n-k})$ with $l_j\ge 0$ and $\sum_{j=1}^{n-k} l_j=m$ such that the inducing characters $\mu_j$ of $\pi_1$ satisfy
    \[
    \mu_j=(\chi_j \alpha^{l_j})^{-1}|\cdot|^{-s_0},
    \qquad 1\le j\le n-k.
    \]
    The following relation can be deduced, just as in \eqref{mu-chi-diagonal};
    \[
    \mu_j\chi_j(a)=\operatorname{sgn}(a)^{l_j}|a|^{-(l_j+s_0)},  \qquad 1\le j\le n-k
    \]
    for $a \in \mathbb{R}^{\times}$. But we know from Proposition \ref{GL(1)-exceptional-union} that $L(s, \mu_j \times \chi_j)$ has an exceptional pole at $s=s_0$ with level $l_j$ for the pair $(\mu_j,\chi_j)$,
    which in turn implies that $L(s, \mu_j \times \chi_j)$ has a pole at $s=s_0$. Since $k<n-1$, we have $n-k\ge2$. Hence at least two of the
$L$-functions $L(s,\mu_j\times\chi_j)$ share the pole $s=s_0$,
which contradicts Condition \ref{strong-general-2}.
    The desired result now follows.
\end{proof}

The result and the argument below are analogous to those in 
\cite[Proposition 4.1]{CPS2017}, \cite[Theorem 5.1]{Jo2018} and 
\cite[Theorem 5.1]{Mat2015} in the non-archimedean setting, although in the present case the conclusion follows more directly from the definition. We also note that \cite[Proposition 4.1]{CPS2017} requires the strong general position assumption, even though it is not stated there.

\medskip
\noindent\textbf{Proof of Theorem~\ref{1.2} under strong general position assumption.}  Consider the right-hand side of Theorem~\ref{1.2}:
\[
\underset{0\le k < n,\, i,j}{\mathrm{l.c.m}}\; L_{ex}\!\big(s, \pi_{1,i}^{(k)} \times \pi_{2,j}^{(k)}\big)^{-1}.
\]
By Proposition~\ref{strong-general-exceptional-Lfunction}, under the strong general position assumption we have
\[
L_{ex}\!\big(s, \pi_{1,i}^{(k)} \times \pi_{2,j}^{(k)}\big) = 1 \quad \text{for all } 0 \le k < n-1.
\]
Hence, all derivatives of order less than \(n-1\) do not contribute to the l.c.m., leaving only the \((n-1)\)-th derivatives. For irreducible principal series \(\pi_1 = \mu_1 \times \dots \times \mu_n\) and \(\pi_2 = \chi_1 \times \dots \times \chi_n\), we have
\[
\pi_{1,i}^{(n-1)} = \mu_i, \qquad \pi_{2,j}^{(n-1)} = \chi_j, \quad 1\le i,j \le n.
\]
Thus, the l.c.m. reduces to
\[
\underset{i,j}{\mathrm{l.c.m}}\; L_{ex}\!\big(s, \mu_i \times \chi_j\big)^{-1} = \prod_{i,j=1}^n L(s, \mu_i \times \chi_j)^{-1}.
\]
But this is exactly $L(s,\pi_1 \times \pi_2)^{-1}$. \hfill \qedsymbol

\subsection{Archimedean Rankin--Selberg Factors}\label{s5.4}

In this subsection we give another application of the finite $P_n$-stable filtration of Proposition \ref{principal-filtration}. 
The following uniqueness theorem for trilinear forms may also be deduced from \cite[Theorem C]{SZ2012}. For the non-archimedean analogue, see \cite[\S 2.9, Proposition]{JPSS1983}.

\begin{prop}[Uniqueness Theorem]  
\label{Gn-invari-onedimension}
Let $n\geq 2$ and $\pi_1$ and $\pi_2$ be two irreducible principal series representations of $G_n$ in general position.  
For all but countably many $s \in \mathbb{C}$,
\[
\dim {\rm Hom}_{G_n}(\pi_1 \hat{\otimes} \pi_2 \hat{\otimes} \mathcal{S}(\mathbb{R}^n),|\det|^{-s}) \leq 1.
\]
\end{prop}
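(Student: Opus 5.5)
The plan is to split the Schwartz space along the decomposition $\mathbb{R}^n=\{0\}\sqcup(\mathbb{R}^n\setminus\{0\})$ and bound the two resulting Hom-spaces separately. There is a $G_n$-equivariant exact sequence
\[
0\longrightarrow \mathcal{S}(\mathbb{R}^n\setminus\{0\})\longrightarrow \mathcal{S}(\mathbb{R}^n)\longrightarrow \mathcal{S}(\mathbb{R}^n)/\mathcal{S}(\mathbb{R}^n\setminus\{0\})\longrightarrow 0 .
\]
The quotient is the space of Taylor jets at the origin. It is the inverse limit of the finite-dimensional pieces $\mathcal{S}/\mathcal{S}^{m}$, whose graded pieces are $\operatorname{Sym}^m(\mathbb{C}^n)$.

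Since $G_n$ acts transitively on $\mathbb{R}^n\setminus\{0\}=e_nG_n$ with stabilizer $P_n$, we have $\mathcal{S}(\mathbb{R}^n\setminus\{0\})\cong \mathcal{S}\mathrm{Ind}_{P_n}^{G_n}(\text{suitable modular character})$. Frobenius reciprocity for Schwartz induction (the analogue of compact induction) then gives
\[
\Hom_{G_n}\bigl(\pi_1\hat\otimes\pi_2\hat\otimes \mathcal{S}(\mathbb{R}^n\setminus\{0\}),|\det|^{-s}\bigr)\cong \Hom_{P_n}\bigl(\pi_1\hat\otimes\pi_2,|\det|^{1-s}\bigr).
\]
The twist $|\det|^{1-s}$ comes from the modular characters; I would double-check it against Proposition~\ref{3.1}. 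Applying $\Hom_{G_n}(\pi_1\hat\otimes\pi_2\hat\otimes-,|\det|^{-s})$ to the sequence gives a left-exact sequence. Hence
\[
\dim\Hom_{G_n}\bigl(\pi_1\hat\otimes\pi_2\hat\otimes\mathcal{S}(\mathbb{R}^n),|\det|^{-s}\bigr)\le \dim(\text{jet part})+\dim\Hom_{P_n}\bigl(\pi_1\hat\otimes\pi_2,|\det|^{1-s}\bigr).
\]
For the jet part I would use Lemma~\ref{pi-F-filtration} and the remark after it, applied to the inverse-limit filtration. This shows it vanishes unless $\Hom_{G_n}(\pi_1\hat\otimes\pi_2\hat\otimes\operatorname{Sym}^m(\mathbb{C}^n),|\det|^{-s})\neq0$ for some $m$, that is, unless $s$ is an exceptional pole of type~2 at some level. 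By Theorem~\ref{classification} and Corollary~\ref{exceptional-set}, such $s$ lie in finitely many arithmetic progressions, hence form a countable set. Excluding these $s$, the jet contribution is zero.

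It remains to show $\dim\Hom_{P_n}(\sigma_1,\sigma_2)\le 1$ for generic $s$, where $\sigma_1=\pi_1$ and $\sigma_2=\pi_2^\vee|\det|^{1-s}$, as in the proof of Theorem~\ref{main-mirabolic-thm}. I would restrict both to $P_n$ and use the finite filtration of Proposition~\ref{principal-filtration}. The argument there shows that a nonzero map between subquotients of type~\ref{principal-filtration-1} forces an isomorphism of the form \eqref{Gm-equi-sym-morphism} after peeling off $I^dE$. That is an equality of characters
\[
|\cdot|^{\dots}\mu_{j_t}\otimes\operatorname{Sym}^{r_t}=\chi^{-1}_{k_{\beta(t)}}|\cdot|^{1-s}\otimes\operatorname{Sym}^{r'_{\beta(t)}},
\]
and each such equality pins $s$ to a countable set. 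There are countably many choices of indices, $r_t$ and $r'_t$, so outside a countable set only the Gelfand--Graev pieces $\mathcal{S}\mathrm{Ind}_{N_n}^{P_n}(\psi)$ can pair. Schur's lemma makes $\Hom$ between these at most one-dimensional.

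The main obstacle is converting "only the Gelfand--Graev pieces pair" into the bound $\dim\Hom_{P_n}(\sigma_1,\sigma_2)\le1$, rather than merely nonvanishing. Here I would use a dévissage: iterate Lemma~\ref{pi-F-filtration} in its Hom-dimension form, via left exactness along each finite or inverse-limit filtration. This should show that $\Hom_{P_n}(\sigma_1,\sigma_2)$ injects into $\Hom_{P_n}(\sigma_1,\mathcal{S}\mathrm{Ind}_{N_n}^{P_n}(\psi))$. It should also show the restriction map to $\Hom_{P_n}(\mathcal{S}\mathrm{Ind}_{N_n}^{P_n}(\psi),\mathcal{S}\mathrm{Ind}_{N_n}^{P_n}(\psi))\cong\mathbb{C}$ is injective, since all other subquotient pairs contribute zero. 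Care is needed because the inverse limits need not commute with $\Hom$ in general. I would handle this with the same continuity argument as in Lemma~\ref{pi-F-filtration}: a map killing all finite quotients of an inverse-limit filtration is zero.
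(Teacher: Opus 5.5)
Your proposal is correct and takes essentially the same route as the paper. You split off $\mathcal{S}(\mathbb{R}^n\setminus\{0\})\cong\mathcal{S}\mathrm{Ind}_{P_n}^{G_n}$, kill the jet contribution off a countable set, pass by Frobenius reciprocity to $\Hom_{P_n}(\pi_1\hat{\otimes}\pi_2,|\det|^{1-s})$, and use the filtration of Proposition~\ref{principal-filtration} to show that generically only the Gelfand--Graev pieces can pair. The differences are minor: you control the jet part through countability of type~2 exceptional poles (Corollary~\ref{exceptional-set}, or simply Proposition~\ref{pole}) rather than the paper's central-character argument on the pieces $\sigma_i$, and your left-exactness d\'evissage makes explicit the bound $\dim\Hom_{P_n}(\pi_1\hat{\otimes}\pi_2,|\det|^{1-s})\le\dim\Hom_{P_n}(\mathcal{S}\mathrm{Ind}_{N_n}^{P_n}(\psi),\mathcal{S}\mathrm{Ind}_{N_n}^{P_n}(\psi))=1$, which the paper only asserts.
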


\begin{proof}
Throughout the proof we retain the notation of Section~\ref{s3}. 
By \cite[Proposition 2.4.2]{AGS2015-2}, we see that
\[
 \mathcal{S}(\mathbb{R}^n\setminus \{ 0\})\cong\{ \ \phi \in \mathcal{S}(\mathbb{R}^n)\,|\, D^{\alpha}\phi=0 
 \ \text{on $(0,\dots,0)$ for all multi-indices }\alpha \text{ with }|\alpha| \geq 0 \}.
\]
With this description in mind, the Schwartz space 
$\mathcal{S}=\mathcal{S}(\mathbb{R}^n)$ admits the $G_n$-stable canonical filtration
\[
\{0\}\subseteq 
\mathcal{S}(\mathbb{R}^n\setminus\{0\})
\subseteq \dots \subseteq 
\mathcal{S}^2\subseteq 
\mathcal{S}^1\subseteq 
\mathcal{S}
\]
as in \eqref{schwartz-decreasing}.
This filtration implies
\begin{multline*}
\dim {\rm Hom}_{G_n}(\pi_1 \hat{\otimes} \pi_2 \hat{\otimes} \mathcal{S}(\mathbb{R}^n),|\det|^{-s}) \\
 \leq \dim {\rm Hom}_{G_n}(\pi_1 \hat{\otimes} \pi_2 \hat{\otimes} \mathcal{S}(\mathbb{R}^n\setminus \{ 0\}),|\det|^{-s})+\sum_{m \geq 0}\dim {\rm Hom}_{G_n}(\pi_1 \hat{\otimes} \pi_2 \hat{\otimes} \rho,|\det|^{-s}) 
\end{multline*}
with $\rho={\rm Sym}^m(\mathbb{C}^n)$. For each non-negative integer $m$, we use the finite $G_n$-stable filtration \eqref{M-filtration} to obtain
\[
 \dim {\rm Hom}_{G_n}(\pi_1 \hat{\otimes} \pi_2 \hat{\otimes} \rho ,|\det|^{-s}) \leq \sum_{i=1}^N \dim {\rm Hom}_{G_n}(\sigma_i,\pi_1^{\vee} |\det|^{-s})
\]
Since $\pi_1^{\vee}$ and $\sigma_i$ are irreducible representations, they admit central characters. 
Consequently the space 
$
{\rm Hom}_{G_n}(\sigma_i,\pi_1^{\vee}|\det|^{-s})
$
vanishes for all but countably many values of $s$.
Therefore, for all but countably many $s$,
\[
 \dim {\rm Hom}_{G_n}(\pi_1 \hat{\otimes} \pi_2 \hat{\otimes} \mathcal{S}(\mathbb{R}^n),|\det|^{-s}) \leq  \dim {\rm Hom}_{G_n}(\pi_1 \hat{\otimes} \pi_2 \hat{\otimes} \mathcal{S}(\mathbb{R}^n \setminus \{ 0\}),|\det|^{-s}).
\]
It follows from \cite[\S 6 Remark]{CS2021} that
\[
 {\rm Hom}_{G_n}(\pi_1 \hat{\otimes} \pi_2 \hat{\otimes} \mathcal{S}(\mathbb{R}^n\setminus \{ 0\}),|\det|^{-s})\cong {\rm Hom}_{G_n}(\pi_1 \hat{\otimes} \pi_2 \hat{\otimes} \mathcal{S}{\rm Ind}^{G_n}_{P_n}(\mathbf{1}),|\det|^{-s})
\]
By \cite[Proposition 4.1.3.1]{War1972} together with Frobenius reciprocity 
\cite[Theorem 5.3.3.1]{War1972} (see also \cite[\S3]{Sou1995} and \cite[\S9.1]{WZ2025}),
we obtain
\[
\dim  {\rm Hom}_{G_n}(\pi_1 \hat{\otimes} \pi_2 \hat{\otimes} \mathcal{S}{\rm Ind}^{G_n}_{P_n}(\mathbf{1}),|\det|^{-s})
=\dim{\rm Hom}_{P_n}(\pi_1 \hat{\otimes} \pi_2,|\det|^{1-s}).
\]
Arguing as in the proof of Theorem~\ref{main-mirabolic-thm}, we analyze the possible morphisms between the subquotients of the multi-filtration described in \eqref{Gm-equi-sym-morphism} and \eqref{GG-source-map}. 
For all but countably many values of $s$, these spaces vanish. Thus only the bottom piece 
$\mathcal{S}{\rm Ind}_{N_n}^{P_n}(\psi)$ in the filtrations of
$\pi_1|_{P_n}$ and $\pi_2^{\vee}|\det|^{-s}|_{P_n}$ can contribute non-trivial maps. To be precise, we have
\[
\dim {\rm Hom}_{P_n}(\pi_1 \hat{\otimes} \pi_2,|\det|^{1-s}) \leq \dim {\rm Hom}_{P_n}(\mathcal{S}{\rm Ind}_{N_n}^{P_n}(\psi),\mathcal{S}{\rm Ind}_{N_n}^{P_n}(\psi)) = 1.
\]
the last equality follows from Schur's lemma together with
Proposition \ref{principal-filtration}-\ref{principal-filtration-2}. Combining the above estimates proves that
\[
\dim {\rm Hom}_{G_n}(\pi_1\hat{\otimes}\pi_2\hat{\otimes}\mathcal{S}(\mathbb{R}^n),|\det|^{-s})\le1
\]
for all but countably many $s$, completing the proof.
\end{proof}

Let $w_n$ denote the long Weyl element
\[
 w_n=\begin{pmatrix} \vspace{-.1cm} &\vspace{-.1cm}  &\vspace{-.1cm}   1 \\ \vspace{-.1cm} &\vspace{-.1cm}  \iddots&\vspace{-.1cm}   \\  1 &  &     \end{pmatrix}.
\]
Let $\pi \in {\rm Irr}_{gen}(G_n)$. For $W \in \mathcal{W}(\pi,\psi)$, define
\[
\widetilde{W}(g)=W(w_n\, {^t g^{-1}}),
\]
which belongs to $\mathcal{W}(\pi^{\vee},\psi^{-1})$.

For $\Phi \in \mathcal{S}(\mathbb{R}^n)$, let $\mathcal{F}_{\psi}(\Phi)$, or simply $\hat{\Phi}$, denote the Fourier transform
\[
\mathcal{F}_{\psi}(\Phi)(x)=\int_{\mathbb{R}^n} \Phi(y)\,\psi(-x\,{^ty})\,dy .
\]
The Haar measure is chosen to be self-dual so that
\[
\mathcal{F}_{\psi}\circ \mathcal{F}_{\psi^{-1}}=\mathrm{Id}.
\]

One checks that the maps
\[
(W,W',\Phi)\mapsto I(s,W,W',\Phi), \qquad
(W,W',\Phi)\mapsto I(1-s,\widetilde{W},\widetilde{W}',\hat{\Phi})
\]
satisfy the same invariance property as \eqref{trilinear-invariance}.

Let
\[
T_{n-1}=\{\operatorname{diag}(t_1,\dots,t_{n-1}) : t_i\in \mathbb{R}^{\times}\}.
\]
Using \eqref{decomposition} together with the asymptotic expansion of Whittaker functions, \cite{JS1989} shows that the Rankin--Selberg integral $I(s,W,W',\Phi)$ can be written as a finite sum of integrals of the form
\begin{multline*}
\int_{T_{n-1}} \chi(t)\chi'(t) \prod_{i=1}^{n-1}\left((\log |t_i|)^{m_i} (\log |t_i|)^{m'_i} \right) |\det (t)|^s\\
\times \int_{\mathbb{R}^{\times}} \omega(a)\omega'(a)\, |a|^{ns}
\Big( \int_K f_i(k) f'_j(k)\, \phi(t,k)\phi'(t,k)\Phi(\epsilon_n a k)\, dk \Big)
\, dt ,
\end{multline*}
where $\phi,\phi' \in \mathcal{S}(\mathbb{R}^{n-1}\times K_n)$, $m_i,m'_i \ge 0$, and $\chi,\chi' : T_{n-1}\to\mathbb{C}^{\times}$ are characters depending only on the representations $\pi_1$ and $\pi_2$.

The above expression relies on the asymptotics of Whittaker functions \cite[Proposition 2.2]{JS1989}. The required asymptotic expansions and the continuity of their coefficients follow from Soudry \cite[Section 4]{Sou1995}, where the coefficients are shown to depend continuously on the Whittaker vector and their growth is controlled. As in the classical group case \cite{Kap2015,Sou1995} and in the Asai case \cite{CCI2020}, it follows that $I(s,W,W',\Phi)$ admits a meromorphic continuation in $s$ which is continuous in $(W,W',\Phi)$ away from a countable set of poles. The same holds for $I(1-s,\widetilde{W},\widetilde{W}',\hat{\Phi})$.

As a consequence of Proposition \ref{Gn-invari-onedimension}, we obtain the following uniqueness principle.

\begin{prop}
Let $n\ge 2$ and let $\pi_1$ and $\pi_2$ be two irreducible principal series representations of $G_n$ in general position.
For all but countably many complex numbers $s$, there exists at most one (up to scalars) continuous trilinear form
\[
B_s:\mathcal{W}(\pi_1,\psi)\times \mathcal{W}(\pi_2,\psi^{-1})\times \mathcal{S}(\mathbb{R}^n)\to\mathbb{C}
\]
satisfying
\[
B_s(g\cdot W,g\cdot W',g\cdot \Phi)=|\det (g)|^{-s} B_s(W,W',\Phi)
\]
for all $g\in G_n$.
\end{prop}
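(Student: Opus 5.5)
This proposition follows directly from Proposition~\ref{Gn-invari-onedimension}. The plan is to turn any invariant trilinear form $B_s$ into an element of the $\Hom$-space bounded there, and check that this passage is injective.

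First we identify the spaces. The Whittaker realization $\pi_1\cong\mathcal{W}(\pi_1,\psi)$ and $\pi_2\cong\mathcal{W}(\pi_2,\psi^{-1})$ is a topological isomorphism of smooth Fréchet representations. We use this, together with the universal property of the completed projective tensor product: continuous trilinear forms on $V_1\times V_2\times V_3$ of Fréchet spaces correspond bijectively to continuous linear functionals on $V_1\hat{\otimes}V_2\hat{\otimes}V_3$. Under this correspondence, a continuous trilinear form $B_s$ on $\mathcal{W}(\pi_1,\psi)\times\mathcal{W}(\pi_2,\psi^{-1})\times\mathcal{S}(\mathbb{R}^n)$ gives a unique continuous linear functional $\ell_{B_s}$ on $\pi_1\hat{\otimes}\pi_2\hat{\otimes}\mathcal{S}(\mathbb{R}^n)$ with $\ell_{B_s}(W\otimes W'\otimes\Phi)=B_s(W,W',\Phi)$.

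Next we check equivariance. The invariance $B_s(g\cdot W,g\cdot W',g\cdot\Phi)=|\det(g)|^{-s}B_s(W,W',\Phi)$ says that $\ell_{B_s}$ and $|\det(g)|^{s}\,\ell_{B_s}\circ g$ agree on pure tensors. Both are continuous, and the algebraic tensor product is dense in the completed one. Hence they agree everywhere, so $\ell_{B_s}\in\Hom_{G_n}\bigl(\pi_1\hat{\otimes}\pi_2\hat{\otimes}\mathcal{S}(\mathbb{R}^n),|\det|^{-s}\bigr)$. The assignment $B_s\mapsto\ell_{B_s}$ is linear and injective, since $\ell_{B_s}$ determines $B_s$ on pure tensors.

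Finally, Proposition~\ref{Gn-invari-onedimension} (applied with $n\ge2$ and $\pi_1,\pi_2$ in general position) gives an exceptional countable set $E\subset\mathbb{C}$ such that this $\Hom$-space has dimension at most $1$ for $s\notin E$. By injectivity, the space of such trilinear forms has dimension at most $1$ for $s\notin E$, which is the claim.

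The only point that needs care is the topological one: the isomorphism $\pi_i\cong\mathcal{W}(\pi_i,\psi^{\pm1})$ must be a homeomorphism, so that continuity transfers. This follows from the Casselman--Wallach theory, since both sides are smooth Fréchet globalizations of moderate growth of the same Harish-Chandra module. Nuclearity of $\mathcal{S}(\mathbb{R}^n)$ is not even needed, because we only use the universal property for jointly continuous trilinear forms. No further difficulty is expected.
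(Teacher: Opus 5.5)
Your proposal is correct and matches the paper. The paper states this proposition without proof, as an immediate consequence of Proposition~\ref{Gn-invari-onedimension}, and the route is exactly the one you spell out: identify invariant continuous trilinear forms injectively with elements of $\Hom_{G_n}\bigl(\pi_1\hat{\otimes}\pi_2\hat{\otimes}\mathcal{S}(\mathbb{R}^n),|\det|^{-s}\bigr)$ via the universal property of $\hat{\otimes}$ and density of pure tensors, then invoke the dimension bound.
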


Therefore, for all but countably many $s$, the integrals 
$I(s,W,W',\Phi)$ and $I(1-s,\widetilde{W},\widetilde{W}',\hat{\Phi})$ 
must be proportional. By meromorphic continuation this proportionality holds for all $s$. This yields the following functional equation.

\begin{thm}[Local functional equation]
\label{LLF}
Let $\pi_1$ and $\pi_2$ be two irreducible principal series representations of $G_n$ in general position.
There exists a meromorphic function $\gamma(s,\pi_1\times\pi_2,\psi)$ such that
\[
I(1-s,\widetilde{W},\widetilde{W}',\hat{\Phi})
=\omega_{\pi_2}^{\,n-1}(-1)\gamma(s,\pi_1\times\pi_2,\psi)\, I(s,W,W',\Phi).
\]
\end{thm}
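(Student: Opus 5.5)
The plan is to derive the functional equation from the uniqueness principle just established (the proposition preceding Theorem~\ref{LLF}), applied away from a countable exceptional set, and then extend to all $s$ by meromorphic continuation. For $n=1$ the statement reduces to Tate's local functional equation, with $\widetilde W$ a scalar and the sign factor trivial, so I will assume $n\ge 2$.

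\textbf{Step 1: invariance of both forms.} Fix $s$ in the region where both $I(s,W,W',\Phi)$ and $I(1-s,\widetilde W,\widetilde W',\hat\Phi)$ are defined by meromorphic continuation and continuous in $(W,W',\Phi)$. By the discussion following Proposition~\ref{Gn-invari-onedimension}, this excludes only a countable set. I will verify that both are continuous trilinear forms satisfying \eqref{trilinear-invariance} with character $|\det|^{-s}$.

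For the first form this is the change of variables $g\mapsto gh^{-1}$ in the defining integral, carried out for $\operatorname{Re}s\gg0$ and extended by continuation.

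For the second form, take $h\in G_n$. Then $\widetilde{h\cdot W}=({}^th^{-1})\cdot\widetilde W$, and $\widehat{h\cdot\Phi}=|\det h|^{-1}\,({}^th^{-1})\cdot\hat\Phi$. The change of variables in $I(1-s,\cdot)$ produces the factor $|\det h|^{1-s}$. Combined with the factor from $\hat\Phi$, the total is $|\det h|^{-s}$.

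\textbf{Step 2: proportionality off a countable set.} Remove the further countable set of $s$ where uniqueness may fail. At each remaining $s$ both forms lie in a space of dimension at most one. The first form is not identically zero, since $I(s,W,W',\Phi)$ does not vanish for suitable data; this is standard from Jacquet's theory, by choosing $\Phi$ supported near $e_n$ and $W,W'$ with small support on $N_n\backslash P_n$. Hence there is a scalar $c(s)$ with
\[
I(1-s,\widetilde W,\widetilde W',\hat\Phi)=c(s)\,I(s,W,W',\Phi)\quad\text{for all } W,W',\Phi.
\]

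Define $\gamma(s,\pi_1\times\pi_2,\psi):=\omega_{\pi_2}^{n-1}(-1)^{-1}c(s)$. The sign is purely a normalization. I would not try to derive it from the uniqueness argument itself; it comes from comparing with the standard JPSS convention, where $\widetilde W'$ involves $w_n$ and $\Phi(e_n\cdot)$ and this produces $\omega_{\pi_2}(-1)^{n-1}$.

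\textbf{Step 3: meromorphy of $\gamma$.} Fix data $(W_0,W_0',\Phi_0)$ with $I(s,W_0,W_0',\Phi_0)\not\equiv0$. Then
\[
c(s)=\frac{I(1-s,\widetilde W_0,\widetilde W_0',\hat\Phi_0)}{I(s,W_0,W_0',\Phi_0)}
\]
is a quotient of meromorphic functions, hence meromorphic.

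For arbitrary data, both sides of the identity $I(1-s,\cdot)=c(s)I(s,\cdot)$ are meromorphic in $s$ and agree off a countable set. A countable set is not necessarily discrete, but its complement has accumulation points, so by the identity theorem the two sides agree identically. This gives the identity for all $s$.

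\textbf{Main obstacle.} The delicate point is Step 1 together with the continuity input: one must check that the meromorphically continued forms are genuinely continuous on the completed tensor product at each admissible $s$, so that they define elements of the Hom space in Proposition~\ref{Gn-invari-onedimension}. I would rely on the cited Soudry-type continuity of asymptotic coefficients, as the text above asserts. The sign constant is the other potential difficulty, handled by the normalization in Step 2.
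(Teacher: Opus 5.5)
Your proposal is correct and follows the paper's argument. Both forms satisfy the same $|\det|^{-s}$-equivariance, the uniqueness proposition forces proportionality off a countable set of $s$, and meromorphic continuation extends the identity to all $s$, with the sign $\omega_{\pi_2}^{n-1}(-1)$ absorbed as a normalization. You additionally verify the equivariance of the dual form explicitly, obtain meromorphy of $\gamma$ from a fixed test vector, and treat $n=1$ via Tate's local functional equation; the paper leaves all three implicit, and its uniqueness proposition only covers $n\ge 2$.
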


The meromorphic function $\gamma(s,\pi_1\times\pi_2,\psi)$ is called the \emph{Rankin--Selberg $\gamma$-factor} of $\pi_1$ and $\pi_2$.

Jacquet and Shalika \cite{Jac2009} establish the local functional equation by a multiplicative argument, reducing it to the theory of standard $L$-functions for $G_n$ developed by Godement and Jacquet, which in turn reduces to the case of $G_1\times G_1$. Therefore, this construction is independent of the approach in \cite{Jac2009}.

We define
\[
\varepsilon(s,\pi_1\times\pi_2,\psi)
=\gamma(s,\pi_1\times\pi_2,\psi)
\frac{L(s,\pi_1\times\pi_2)}
{L(1-s,\pi_1^\vee\times\pi_2^\vee)}.
\]
Then the functional equation takes the form
\[
\frac{I(1-s,\widetilde{W},\widetilde{W}',\hat{\Phi})}
{L(1-s,\pi_1^\vee\times\pi_2^\vee)}
=
\omega_{\pi_2}^{\,n-1}(-1)\,
\varepsilon(s,\pi_1\times\pi_2,\psi)\,
\frac{I(s,W,W',\Phi)}
{L(s,\pi_1\times\pi_2)}.
\]

The Rankin--Selberg $\gamma$-factor $\gamma(s,\pi_1\times\pi_2,\psi)$ and $\varepsilon$-factor $\varepsilon(s,\pi_1\times\pi_2,\psi)$ coincide with those defined by Jacquet and Shalika \cite{Jac2009}.

\begin{acknowledgements}
The authors express their sincere gratitude to D. Prasad, R. Raghunathan, U. K. Anandavardhanan and P. Humphries for their invaluable comments and interest in this work. Y. Jo also thanks J. Cogdell for bringing to his attention an unfinished project in the archimedean setting many years ago, and S. W. Shin for his encouragement and insightful questions.
\end{acknowledgements}

\begin{funding}
Y. Jo and A. Yadav were partially supported by Global-Learning \& Academic research institution for Master’s·PhD students, and Postdocs(G-LAMP) Program of the National Research Foundation of Korea(NRF) grant funded by the Ministry of Education(No. RS-2025-25442252).
Y. Jo was also partially supported by the NRF grant 
funded by the Korea government(No. RS-2023-00209992). 
\end{funding}

\end{section}

\end{document}